\def\BBox{\rule{2mm}{3mm}}
\def\QED{\hfill$\BBox$}
\newenvironment{proof}
{\begin{rm}\par\smallskip\noindent{\bf Proof.}\quad}{\QED\end{rm}}
\newtheorem{thm}{Theorem}[section]
\newtheorem{lem}[thm]{Lemma}        %
\newtheorem{remark}[thm]{\bfseries Remark}    %
\newtheorem{prop}[thm]{\bfseries Proposition} %
\newtheorem{cor}[thm]{\bfseries Corollary}
\newtheorem{defn}[thm]{\bfseries Definition}
\numberwithin{equation}{section}
\title{Complete combinatorial characterization \\ of greedy-drawable trees}
\author{Hiroyuki Miyata \\ Faculty of Informatics, \\ Gunma University \\ \texttt{hmiyata@gunma-u.ac.jp} \and Reiya Nosaka \\ Faculty of Informatics, \\ Gunma University \\ \texttt{t201d062@gunma-u.ac.jp}}
\date{}
\begin{document}
\maketitle            
\thispagestyle{empty} 

\begin{abstract}
A (Euclidean) greedy drawing of a graph is a drawing in which, for every pair of vertices $s,t$ ($s \neq t$),
there is a neighboring vertex of $s$ that is closer to $t$ than $s$ in the Euclidean distance.   
Greedy drawings are crucial in the context of message routing in networks, and graph classes that admit greedy drawings have been actively investigated.
N\"{o}llenburg and Prutkin (Discrete Comput.\ Geom.,\ 58(3), pp.~543--579, 2017) characterized greedy-drawable trees in terms of an inequality system containing a non-linear equation. 
Using the characterization, they designed a linear-time recognition algorithm for greedy-drawable trees with maximum degree $\leq 4$.
However, the possibility of a combinatorial characterization of  greedy-drawable trees with maximum degree 5 was left open.
In this paper, we present a combinatorial characterization of greedy-drawable trees with maximum degree $5$, which leads to 
a complete combinatorial characterization of greedy-drawable trees.
Furthermore, we present a characterization of greedy-drawable pseudo-trees.
\end{abstract}

\section{Introduction}\label{Sec: Intro}
\emph{Geographic routing} (or \emph{geometric routing}) is a type of routing that uses the geographic coordinates of the nodes as addresses for the purpose of routing.
One of the simplest routing algorithms is \emph{greedy routing}, in which each node simply forwards a message to its neighbor that is closest to the destination.
In (pure) greedy routing, however, a message delivery fails if the message is forwarded to a node with no neighbor that is closer to the destination.
A graph drawing in which greedy routing is guaranteed to work is called a \emph{greedy drawing} (or \emph{greedy embedding})~\cite{PR05}.
That is,  a (Euclidean) greedy drawing of a graph $G$ is a drawing of $G$ in which for every pair of vertices $s,t$ ($s \neq t$) of $G$,
there is a neighboring vertex $u$ of $s$ with $d(s,t) > d(u,t)$, where $d$ is the Euclidean distance. 
This notion  is motivated by work of Rao et al.~\cite{RRPSS03}, who proposed the idea of applying greedy routing based on \emph{virtual coordinates} (i.e., a graph drawing) rather than geographic coordinates,
which makes greedy routing  applicable even in the absence of locational information.
Through extensive experiments, the authors showed that their approach makes greedy routing much more reliable. 

In~\cite{PR05}, Papadimitriou and Ratajczak presented a remarkable conjecture that every 3-connected planar graph admits a greedy drawing.
This conjecture gives a theoretical guarantee to the approach proposed by Rao et al.~\cite{RRPSS03}, and triggered intense research on
the class of graphs that admit greedy drawings (greedy-drawable graphs).
The conjecture of Papadimitriou and Ratajczak was first proved for triangulations by Dhandapani~\cite{D10}.
The conjecture itself was proved independently by Leighton and Moitra~\cite{LM10} and Angelini et al.~\cite{AFG10}.
Subsequently,  Da Lozzo et al.~\cite{DAF20} proved a stronger version of the conjecture, asserting the existence of a planar greedy drawing.
As greedy-drawability is a monotonic graph property (that is, the property is preserved under edge addition), the class of greedy-drawable trees has also gained much attention.
In~\cite{LM10}, Leighton and Moitra investigated a sufficient condition in which a binary tree does not admit a greedy drawing.
N\"{o}llenburg and Prutkin\cite{NP17} characterized greedy-drawable trees in terms of an inequality system containing a non-linear equation.
Investigating the feasibility of the inequality system, they derived an explicit description of the greedy-drawable trees with maximum degree $3$ and a linear-time recognition algorithm for greedy-drawable trees with maximum degree $\leq 4$.
In the case of maximum degree~$5$, however, testing the feasibility of the inequality system is sometimes difficult, and
neither an explicit description nor a recognition algorithm is known for greedy-drawable trees.
Indeed, the authors of the paper~\cite{NP17} presented a tree with maximum degree~$5$ (p.~574, Fig.~21 (d)) whose greedy-drawability was unclear.
Meanwhile, Kleinberg~\cite{K10} proved that every tree has a greedy drawing in the hyperbolic plane.
\\
\\
\textbf{Our contributions.}
In this paper, we present a complete combinatorial characterization of  (Euclidean) greedy-drawable trees.
First, we carefully analyze and clarify the work of N\"{o}llenburg and Prutkin~\cite{NP17}, and derive an explicit description of greedy-drawable trees with maximum degree $\leq 4$ (Proposition~\ref{prop:deg4}).
Next, we derive an explicit description of greedy-drawable trees with maximum degree $5$ (Theorem~\ref{thm:main}) by developing a new technique for determining the feasibility of the inequality system proposed in~\cite{NP17}.
As the maximum degree of  a greedy-drawable tree must be less than $6$~\cite{PR05}, our results represent a complete combinatorial characterization of greedy-drawable trees.
Furthermore, we study the greedy-drawability of  \emph{pseudo-trees}, i.e., graphs obtained by adding one edge to a tree.
We provide a complete characterization of greedy-drawable pseudo-trees (Theorem~\ref{thm:pseudo_trees}).
Using our characterizations of greedy-drawable trees and pseudo-trees, one can verify the greedy-drawability of a tree or pseudo-tree in linear time (Corollaries~\ref{cor:tree} and \ref{cor:pseudo_tree}).
\\
\\
\textbf{Related Work.}
In a greedy drawing of a graph, one can find a path between any two vertices by iteratively selecting a neighboring vertex closer to the destination.
In this sense, greedy drawings can be viewed as graph drawings in which one can easily find a path between any two vertices.
Such a property is desirable in many applications, and several other types of graph drawings have been proposed for helping path-finding tasks.
Alamdari et al.~\cite{ACLP13} introduced \emph{self-approaching drawings}, which satisfy a condition stronger than greedy drawings that
every two vertices $s$ and $t$ are joined by an $st$-path such that $d(b,c) < d(a,c)$ holds for any points (not necessarily vertices) $a,b,c$ along the path,
where $d$ is the Euclidean distance.
Self-approaching drawings have several advantages over greedy drawings.
For example, the \emph{stretch factor}, i.e., the maximum, over vertices $s$ and $t$, ratio between the length of a shortest $st$-path and the Euclidean distance $d(s,t)$, is always at most $5.3332$~\cite{IKL95} for self-approaching drawings, 
whereas the ratio can be arbitrarily large for greedy drawings. 
The authors of~\cite{ACLP13} characterized the trees that have self-approaching drawings.
In terms of our characterization of greedy-drawable trees, those trees correspond to subdivisions of type-$D_{k,l,n}$ trees with maximum degree $3$ (see Section~\ref{sec:open_angle}), which form a very special case of greedy-drawable trees.
\emph{Increasing-chord drawings}~\cite{ACLP13} are an even more restricted class of drawings, in which every two vertices $s$ and $t$ are connected by a path that is an $st$-self-approaching and $ts$-self-approaching path.
The stretch factor of an increasing-chord drawing is at most 2.094~\cite{R04}.
N\"{o}llenburg et al.~\cite{NPR16} proved that triangulations admit increasing-chord drawings in the Euclidean plane and 3-connected planar graphs admit such drawings in the hyperbolic plane.
Angelini et al.~\cite{ADKMRSW15} introduced \emph{monotone drawings}, in which there is always a path between two vertices that is monotone with respect to some direction.
The authors of~\cite{ADKMRSW15} proved that every biconnected graph and outer planar graph admits a monotone drawing.
They also introduced the notion of strongly monotone drawings, which assumes the existence of an $st$-path between every pair of vertices $s$ and $t$ that is monotone with respect to the direction $\vec{st}$.
Kindermann et al.~\cite{KSSW14} proved that every tree admits a strongly monotone drawing.
Felsner et al.~\cite{FIKKMS16} showed that every 3-connected planar graph and 2-tree admits a strongly monotone drawing. 
Dehkhori et al.~\cite{DFG15} introduced \emph{angle-monotone drawings} (this name is given in \cite{BBCKLV16}), which require a path between every pair of vertices that is $x$- and $y$-monotone after some rotation. 

\section{Preliminaries}
\label{sec:preliminaries}
Let $G$ be a graph.
A drawing $\Gamma$ of $G$ is called a \emph{straight-line drawing}
if every node is represented as a point in the plane and
every edge as the line segment between its endpoints.
We hereinafter assume that all drawings are \emph{plane straight-line drawings}, that is, straight-line drawings with no edge crossing.
The drawing $\Gamma$ is said to be \emph{greedy} if, for every pair of vertices $s,t(s \neq t)$, 
there exists a neighboring vertex $u$ of $s$ with $d(u, t) < d(s, t)$, where $d$ is a distance function.
That is, the drawing $\Gamma$ is greedy if, for every pair of vertices $s,t(s \neq t)$, there exists a path $v_0 (\coloneqq s),v_1,\dots,v_m(\coloneqq t)$ such that $d(v_i,t) > d(v_{i+1},t)$ for $i=1,\dots,m-1$.
Throughout this paper, we consider the case in which $d$ is the Euclidean distance.   
If $G$ has a greedy drawing, we say that $G$ is \emph{greedy-drawable}. 

Let $T$ be a tree.
For an edge $uv$ of $T$, we consider the subtree  $T^u_{uv}$ of  $T$ that contains $u$ obtained by deleting $uv$.
We let $\mathrm{axis}(uv)$ be the perpendicular bisector of $uv$, and consider the open half-plane $h^u_{uv}$  bounded by $\mathrm{axis}(uv)$ that contains $u$. 
Since the half-plane $h^u_{uv}$ corresponds to the set of points that are closer to $u$ than to $v$, the following proposition holds.
\begin{prop}(\cite[Lemma~3]{ABF12},\cite[Lemma~2.3]{NP17})\\
 A drawing $\Gamma$ of a tree $T$ is greedy if and only if every subtree $T^u_{uv}$ is contained in $h^u_{uv}$ in $\Gamma$.
\label{prop:halfspace}
\end{prop}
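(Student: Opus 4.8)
The plan is to prove the equivalence pairwise over all ordered pairs $(s,t)$ of distinct vertices of $T$, using only two elementary facts: in a tree there is a unique simple path between any two vertices, and $h^u_{uv}$ is by construction exactly the set of plane points strictly closer to $u$ than to $v$, so that ``$w \in h^u_{uv}$'' is literally a restatement of $d(u,w) < d(v,w)$.

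For the ``if'' direction I would argue directly. Assume every $T^u_{uv}$ lies in $h^u_{uv}$ and fix distinct vertices $s,t$; let $s = v_0, v_1, \dots, v_m = t$ be the $s$--$t$ path in $T$. For each $i$, deleting the edge $v_iv_{i+1}$ leaves $t$ in the component containing $v_{i+1}$, i.e.\ $t \in T^{v_{i+1}}_{v_iv_{i+1}} \subseteq h^{v_{i+1}}_{v_iv_{i+1}}$, and hence $d(v_{i+1},t) < d(v_i,t)$. So the tree path $v_0,\dots,v_m$ is already a distance-decreasing path to $t$; in particular $v_1$ is a neighbour of $s$ with $d(v_1,t)<d(s,t)$, and since $(s,t)$ was arbitrary $\Gamma$ is greedy.

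For the ``only if'' direction I would fix an edge $uv$ and a vertex $w \in T^u_{uv}$ and show $d(u,w) < d(v,w)$. Applying greediness to the pair $(v,w)$ yields a walk $v = x_0, x_1, \dots, x_k = w$ with $d(x_{j+1},w) < d(x_j,w)$ for all $j$; since these distances strictly decrease, the $x_j$ are pairwise distinct, so the walk is a simple path in $T$ and therefore, by uniqueness of paths in a tree, coincides with the tree path from $v$ to $w$, which begins $v, u, \dots$ because deleting $uv$ separates $v$ from $w \in T^u_{uv}$. Hence $x_1 = u$ and $d(u,w) = d(x_1,w) < d(x_0,w) = d(v,w)$, i.e.\ $w \in h^u_{uv}$; as $uv$ and $w$ were arbitrary this gives the required containment.

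No step here is genuinely difficult; the only place needing a moment's care is the last reduction --- from the existence of \emph{some} distance-decreasing walk to the fact that the specific \emph{tree} path is distance-decreasing --- and it is handled by the observation that a strictly distance-decreasing walk cannot revisit a vertex, combined with uniqueness of paths in a tree. (The ``path'' reformulation of greediness already recorded in Section~\ref{sec:preliminaries} essentially bakes in this observation, so in practice almost nothing remains to be done.)
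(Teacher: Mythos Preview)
Your proof is correct. The paper does not supply its own proof of this proposition; it simply cites \cite[Lemma~3]{ABF12} and \cite[Lemma~1]{NP17}, so there is nothing to compare against beyond noting that your argument is the standard one and matches what those references do: use uniqueness of tree paths together with the observation that $h^u_{uv}$ is precisely the locus of points closer to $u$ than to $v$.
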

Based on this proposition, N\"{o}llenburg and Prutkin~\cite{NP17} developed the following strategy to determine the greedy-drawability of a tree $T$.
Let $r$ be a vertex of $T$, and $v_0, \dots, v_{d-1}$ be the neighbors of $r$.
We first decompose $T$ into $d$ subtrees  $T^{v_i}_{rv_i} +rv_i \ (i = 0, \dots, d-1)$.
Then, we regard each tree $T^{v_i}_{rv_i} +rv_i$ as  a rooted tree with root $r$, and denote it as $T_i = (V_i,E_i)$. 
We let $\mathrm{polygon}(T_i) \coloneqq \bigcap \{ h_{uw}^w \mid uw \in E_i, uw \neq rv_i, d_{T}(w,r) < d_T(u,r)\}$, where $d_T$ is the graph distance, and let $\widehat{T}_r$ be the star induced by $r$ and its neighbors.
Suppose we already have a greedy drawing of  each $T_i$ and want to construct a greedy drawing of $T$ by combining them.
Then, the task is to construct a drawing in which $\widehat{T}_r$ is drawn greedily and each $T_j$ ($j \neq i$) is contained in $\mathrm{polygon}(T_i)$.
If $\mathrm{polygon}(T_i)$ is unbounded with respect to the direction $\overrightarrow{v_ir}$, then this task can be simplified;
if this condition holds, one can transform the drawing of $T_i$ into a drawing in which $T^{v_i}_{rv_i}$ is drawn infinitesimally small, where $\mathrm{polygon}(T_i)$ is (arbitrarily close to) the open cone spanned by the two unbounded edges (see Figure~\ref{fig:polytope} for an intuition).
This observation leads to the notion of opening angles, which plays a fundamental role in classifying greedy-drawable trees.
\begin{defn}(\cite[Definitions 2.8, 2.10, and 2.12]{NP17}, slightly modified)\\
Let $\Gamma$ be a drawing of $T$.
\begin{itemize}
\item
If $\mathrm{polygon}(T_i)$ is unbounded, we say that $T_i$ is drawn with an \emph{open angle} in $\Gamma$.
Let $a_1b_1$ and $a_2b_2$ be the edges of $T$, where $d_T(a_i,r) < d_T(b_i,r)$, such that $\mathrm{axis}(a_1b_1)$ and $\mathrm{axis}(a_2b_2)$ are the supporting lines of the two unbounded edges of $\mathrm{polygon}(T_i)$.
We define $\angle{T_i} \coloneqq h_{a_1b_1}^{a_1} \cap h_{a_2b_2}^{a_2}$ and refer to it as the \emph{opening angle} of $T_i$ in $\Gamma$. 
We write $|\angle{T_i}| = \alpha$, where $\alpha$ is the angle between the two rays of $\angle{T_i}$.
We define $|\angle{T_i}|_* \coloneqq \sup  (|\angle{T_i}|)$, where the supremum is taken over all greedy drawings of $T_i$.
\item 
If $\mathrm{polygon}(T_i)$ is bounded, we say that $T_i$ is drawn with a \emph{closed angle} in $\Gamma$ and write $|\angle{T_i}| < 0$. 
If $|\angle{T_i}| < 0$ for every greedy drawing of $T_i$, we write $|\angle{T_i}|_* < 0$.
\end{itemize}
\label{def:open_angle}
\end{defn}
\begin{figure}[htb]
\begin{minipage}{0.5\linewidth}
\centering
\includegraphics[scale=0.15, bb = 0 0  846 948, clip]{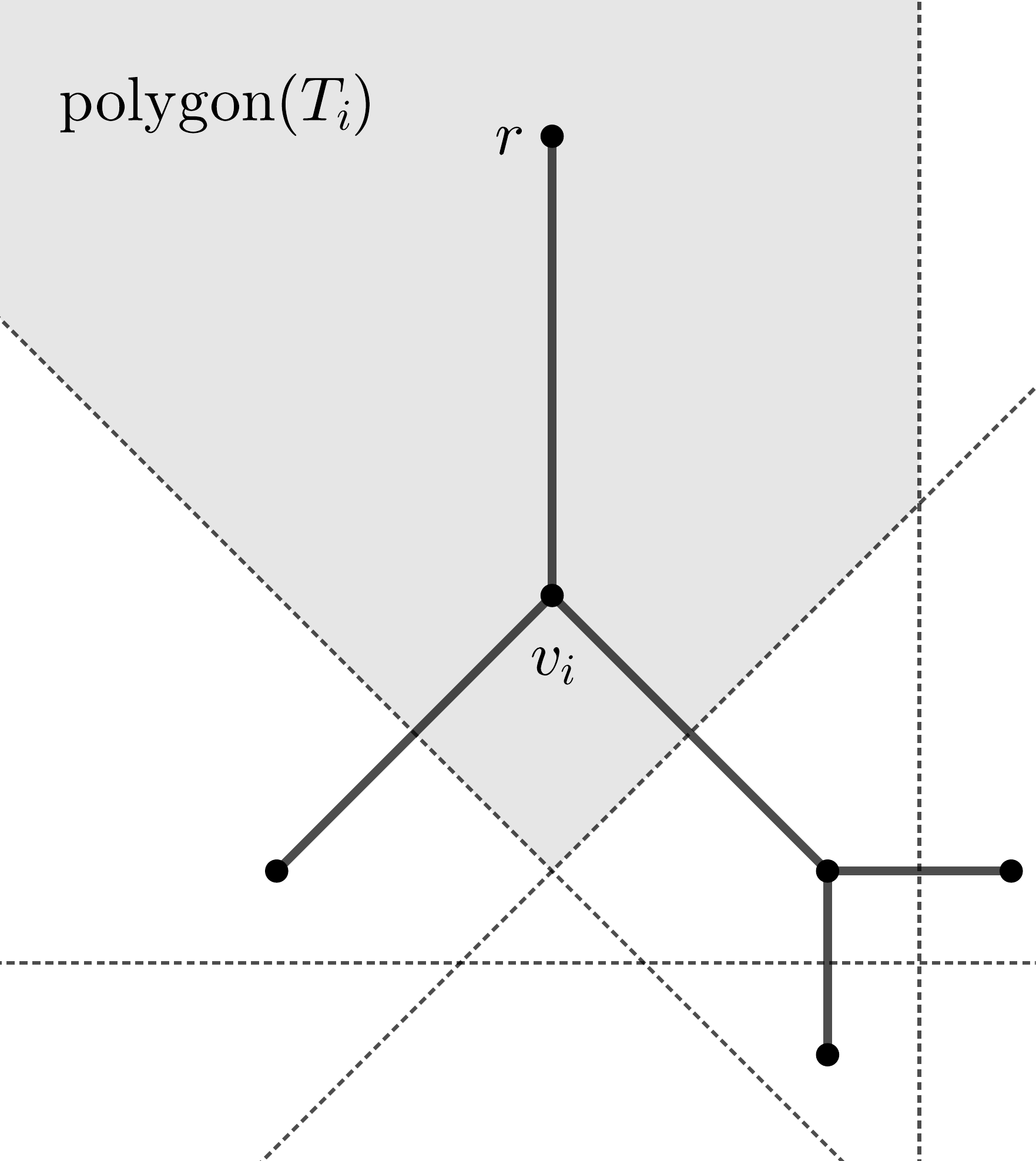} 
\end{minipage}
\begin{minipage}{0.5\linewidth}
\centering
\includegraphics[scale=0.15,bb = 0 0 846 948, clip]{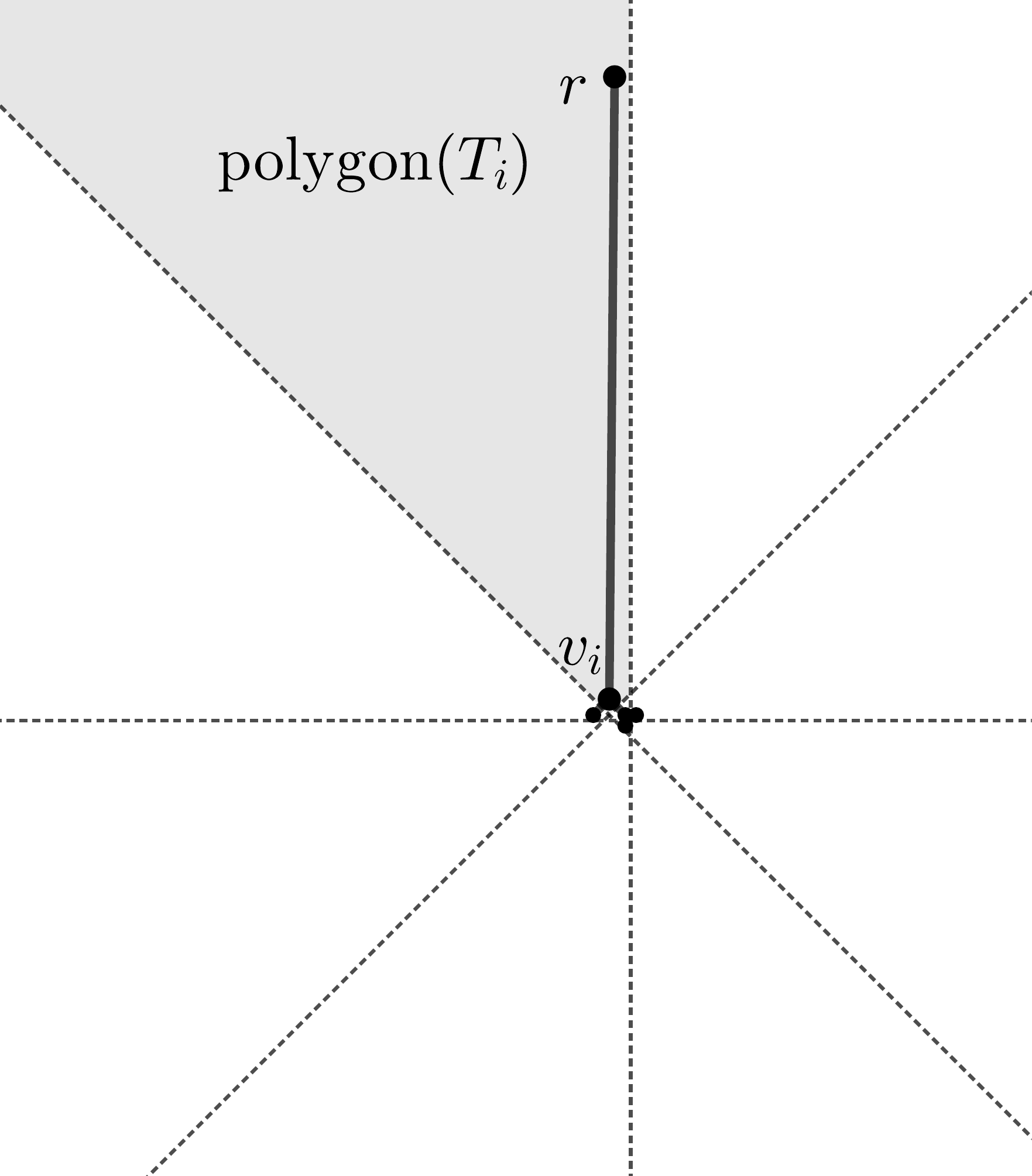} 
\end{minipage}
    \caption{Drawings of $T_i$ (left: original drawing, right: shrunk drawing)} 
\label{fig:polytope}
\end{figure}
Slightly perturbating a greedy drawing, one can always assume that  $|\angle{T_i}| > 0$ or $|\angle{T_i}| < 0$.
In what follows, we do not pay so much attention to the case  $|\angle{T_i}| = 0$.
N\"{o}llenburg and Prutkin~\cite{NP17} proved that the drawing of $T^{v_i}_{rv_i}$ can always be shrunk while preserving greediness of $T$ if $|\angle{T_i}| > 0$. 
\begin{lem}(shrinking lemma~\cite[Lemma~2.18]{NP17})\\
Let $T=(V,E)$ be a tree and $T' = T^v_{rv} +rv$, $rv \in E$, be a subtree of $T$.
If $T$ has a greedy drawing $\Gamma$ such that $|\angle{T'}| > 0$, then there is a greedy drawing $\Gamma'$  such that $T_{rv}^v$ is drawn infinitesimally small, and the drawing of $T_{rv}^r$ and the angle $|\angle{T'}|$ are the same as those in $\Gamma$
(and $\angle{T'}$ contains the original angle in $\Gamma$).
\label{lem:shrinking}
\end{lem}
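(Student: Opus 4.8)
The plan is to build $\Gamma'$ from $\Gamma$ by keeping the drawing of $T^r_{rv}$ untouched and replacing the drawing of $T^v_{rv}$ by a tiny scaled copy of itself, placed at a suitable point, with the edge $rv$ redrawn accordingly. Write $p_x$ for the point of $\Gamma$ representing a vertex $x$, and let $K$ be the recession cone of $\mathrm{polytope}(T')$ (equivalently, the translate of the wedge $\angle{T'}$ with apex at the origin); since $|\angle{T'}|>0$ this is a genuine two-dimensional convex cone of angular width $|\angle{T'}|$. Given a point $q$ and a ratio $\mu\in(0,1)$, let $\Gamma'_{q,\mu}$ be the drawing that agrees with $\Gamma$ on $T^r_{rv}$, draws each vertex $x$ of $T^v_{rv}$ at $q+\mu(p_x-p_v)$ (so $v$ is placed at $q$ and the edge directions inside $T^v_{rv}$ are preserved), and draws $rv$ as the segment $p_rq$. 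Because $x\mapsto q+\mu(p_x-p_v)$ is a positive homothety, it carries each half-plane $h^w_{uw}$ (for $uw$ an edge of $T^v_{rv}$ with $w$ the endpoint closer to $r$) onto the corresponding half-plane of $\Gamma'_{q,\mu}$, so $\mathrm{polytope}_{\Gamma'_{q,\mu}}(T')=q+\mu(\mathrm{polytope}(T')-p_v)$. Since $p_v\in\mathrm{polytope}(T')$, this set contains $q+K$ for every $\mu$ and shrinks down to $q+K$ as $\mu\to0$. Thus in $\Gamma'_{q,\mu}$ the subtree $T'$ has diameter $\mu\cdot\mathrm{diam}_{\Gamma}(T^v_{rv})\to0$, its opening angle is still $|\angle{T'}|$, and its polytope has broadened into the entire wedge $q+K$ (the two unbounded edges having grown into full rays from the apex).

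I would then check that $\Gamma'_{q,\mu}$ is greedy via the Proposition, i.e.\ verify for every edge $ab$ of $T$ and either choice of endpoint-subtree that the subtree lies in the corresponding half-plane, sorting the edges into three groups. (i) If $ab$ lies inside $T^v_{rv}$: the endpoint-subtree avoiding $v$ sits entirely in the tiny copy, so its condition is inherited from $\Gamma$ because the re-drawing restricted to $T^v_{rv}$ is a similarity; the other endpoint-subtree is part of that copy (fine by the same similarity) together with all of $T^r_{rv}$, and its half-plane is one of those defining $\mathrm{polytope}_{\Gamma'_{q,\mu}}(T')$, so its condition holds as soon as $T^r_{rv}\subseteq\mathrm{polytope}_{\Gamma'_{q,\mu}}(T')$, hence as soon as $T^r_{rv}\subseteq q+K$. (ii) For the edge $rv$: the copy lies in a tiny disc about $q$, so $T^v_{rv}\subseteq h^v_{rv}$ once $\mu$ is small, and $T^r_{rv}\subseteq h^r_{rv}$ holds whenever $q$ is far enough from $T^r_{rv}$. (iii) If $ab$ lies inside $T^r_{rv}$: the endpoint-subtree avoiding $r$ is untouched, while the one containing $r$ is its $\Gamma$-drawing together with the tiny copy near $q$; writing $R:=\bigcap\{h^a_{ab}\mid ab\text{ an edge of }T^r_{rv}\text{ with }a\text{ closer to }r\}$, its condition holds once $q\in R$ and $\mu$ is small. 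The set $R$ is nonempty and open, since in $\Gamma$ every vertex of $T^v_{rv}$ already lies in $R$ (it lies in the relevant subtree $T^a_{ab}$, which lies in $h^a_{ab}$ by greediness of $\Gamma$). Hence everything reduces to finding $q\in R$ with $T^r_{rv}\subseteq q+K$, with $q$ far from $T^r_{rv}$, and with the segment $p_rq$ meeting $T^r_{rv}$ only at $p_r$ and not cutting the tiny copy at $q$; one then takes $\mu$ small enough, and letting $\mu\to0$ produces the required ``infinitesimally small'' drawing.

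The crux is the existence of such a $q$, which is where the open-angle hypothesis is really used. I would push $v$ off to infinity: look for a direction $\vec d$ that lies in the recession cone of $R$ and has $-\vec d$ in the interior of $K$. For any such $\vec d$ the point $q=p_v+t\vec d$ lies in $R$ for all $t\ge0$ (as $p_v\in R$), satisfies $T^r_{rv}\subseteq q+K$ for all large $t$ (because $K$ has positive width and $T^r_{rv}$ is bounded), and is far from $T^r_{rv}$ for large $t$; and, using that the drawing of $T^r_{rv}$ in $\Gamma$ is a plane tree, hence has connected complement, the segment $p_rq$ can be kept crossing-free after, if necessary, perturbing $\vec d$ slightly within the still-admissible cone. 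Exhibiting such a $\vec d$ --- and simultaneously routing $rv$ without crossings --- is the step I expect to be hard: I would try to produce it from the combination of greediness and planarity of $\Gamma$ (all half-planes defining $\mathrm{polytope}(T')$ contain $p_v$ and all of $T^r_{rv}$; $\mathrm{polytope}(T')$ is unbounded in the direction $\overrightarrow{vr}$; the plane drawing places $T^v_{rv}$ ``outward'' of $T^r_{rv}$, with $rv$ leaving $r$ through the empty angular sector at $r$), perhaps through a separation argument on these edge directions or a continuity argument that deforms $\Gamma$ while preserving greediness. Once $q$ and a small enough $\mu$ are fixed, the three groups above certify that $\Gamma'_{q,\mu}$ is greedy; letting $\mu\to0$ finishes the proof, with $q+K$ the full wedge of angle $|\angle{T'}|$, so that the opening angle is unchanged while $\angle{T'}$ has broadened.
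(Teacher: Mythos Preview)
The paper does not prove this lemma; it is quoted from N\"ollenburg--Prutkin~\cite{NP17} without argument, so there is no in-paper proof to compare against.

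On your attempt itself: the overall framework (keep $T^r_{rv}$ fixed, replace $T^v_{rv}$ by a scaled copy near a point $q$, redraw $rv$) and the three-way case split on edges are sound, and the computation $\mathrm{polytope}_{\Gamma'_{q,\mu}}(T')=q+\mu(\mathrm{polytope}(T')-p_v)\supseteq q+K$ is correct. The step you flag as hard is, however, a genuine gap, and your proposed strategy for it can fail. You seek a direction $\vec d$ in the recession cone of $R$ with $-\vec d\in\mathrm{int}(K)$. But $R$ is precisely the polytope of the \emph{other} rooted subtree $T^r_{rv}+rv$ (with root $v$), and nothing in the hypotheses forces this to be unbounded: the lemma assumes only $|\angle{T'}|>0$. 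Lemma~\ref{lem:open_closed} guarantees that at least one of the two polytopes is unbounded in a greedy drawing, not both; when $R$ is bounded its recession cone is $\{0\}$ and the push-to-infinity argument collapses. Even when $R$ happens to be unbounded, you have not argued that its recession cone meets $-\mathrm{int}(K)$.

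Note also that the naive alternative (keep $v$ at $p_v$ and scale $T^v_{rv}$ towards $p_v$) does not work either: the scaled polytope $p_v+\mu(\mathrm{polytope}(T')-p_v)$ need not contain $T^r_{rv}$ for small $\mu$; e.g.\ with $v=(0,0)$, $w=(0,-1)$, $u=(1,-1)$, $r=(0.3,1)$ one checks the drawing is greedy but $r\notin p_v+K$. So moving $v$ is indeed necessary, but a different recipe for $q$ is needed. One candidate is to scale around the apex of $\angle{T'}$ rather than around $p_v$, which keeps $\angle{T'}$ unchanged; the conditions from group~(iii) and planarity would then still require care. You may wish to consult~\cite[Lemma~8]{NP17} directly; as the Remark following the present lemma hints, the original argument is tied to a slightly more involved definition of opening angle.
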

\begin{remark}
For some technical reasons, the original definition of an opening angle is slightly more complicated.
Indeed, Lemma~2.14 in \cite{NP17}, which claims that the apex of the opening angle $\angle{T_i}$ of a subtree $T_i$ is always contained in the opening angle $\angle{T_j}$ of another subtree $T_j$ in every greedy drawing of $T$, does not always hold under the present definition. 
This fact is important because the shrunk drawing in Lemma~\ref{lem:shrinking} is constructed in \cite{NP17} by placing an infinitesimally small drawing of $T_i$ on the apex of $\angle{T_i}$.
However, once we accept Lemma~\ref{lem:shrinking}, and consider such a drawing, the original and present definition coincide (with arbitrary precision).
Throughout the remainder of this paper, we always consider such drawings, making the two definitions functionally equivalent.
\end{remark}
Suppose that  each $T_i$ can be drawn with an open angle, and let $\varphi_i \coloneqq |\angle{T_i}|_*$. 
Based on Lemma~\ref{lem:shrinking}, N\"{o}llenburg and Prutkin~\cite[Theorem~4.4]{NP17}  proved that $T$ has a greedy drawing if and only if there is a permutation $\tau$ on $\{ 0,\dots, d-1 \}$
such  that the following inequality system has a solution.
For $i=0,\dots,d-1$,
\begin{align}
\begin{split}
&0 < \alpha_i, \beta_i, \gamma_i < 180, \\
&\alpha_i + \beta_i + \gamma_i = 180, \ \alpha_0 + \dots + \alpha_{d-1} = 360, \\
&\sin (\beta_0)\cdot \dots \cdot \sin (\beta_{d-1}) = \sin (\gamma_0)\cdot \dots \cdot \sin (\gamma_{d-1}), \\
&\beta_i < \alpha_i, \ \gamma_i < \alpha_i, \\
&\beta_i + \gamma_{i+1} < \varphi_{\tau(i)} \ (i \bmod d).
\end{split}
\label{ineq_main}
\end{align}
The conditions in the first three lines describe the possible angles $\alpha_i,\beta_i,\gamma_i$ of the wheel graph $W_d$, which appear in the work of Di Battista and Vismara~\cite{DV96}.
Together with the conditions in the fourth line, they describe all possible angles in greedy drawings of  $\widehat{T_r}$.
Finally, the combined system describes  all possible angles of greedy drawings of $T$ in which the subtrees $T_0,\dots,T_{d-1}$ are drawn infinitesimally small with the order $T_{\tau (0)},\dots, T_{\tau (d-1)}$ around $r$.
See Figure~\ref{fig:gluing} for an intuition.
We will see that, for any greedy-drawable tree $T$, a vertex $r$ can be chosen so that $|\angle{T_0}|_*>0,\dots,|\angle{T_{d-1}}|_*>0$ (Lemma~\ref{lem:all_open_angles}).
Thus, the classification of greedy-drawable trees can be performed by the following two steps~\cite{NP17}:
\begin{enumerate}
\item Determine all rooted trees that can be drawn with open angles, and compute the supremum of opening angles of each tree. This step is discussed in Section~\ref{sec:open_angle}.
\item Determine all possible combinations of rooted trees that can be glued at a single vertex by considering feasibility of the system~(\ref{ineq_main}).  This step is discussed in Sections~\ref{sec:deg4} and~\ref{sec:deg5}.
\end{enumerate}

\begin{figure}[h]
\centering
\includegraphics[scale=0.3, bb =0 0 369 329, clip]{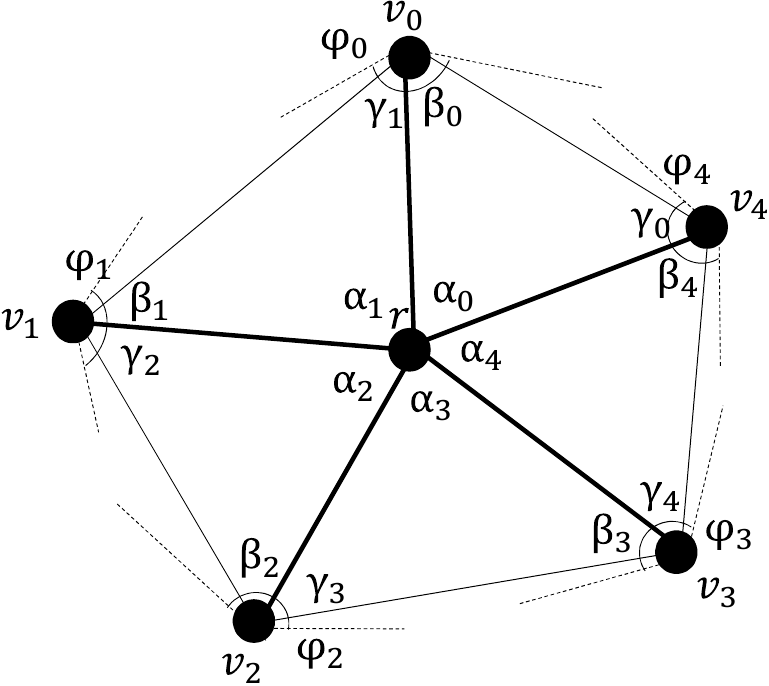} 
  \caption{Construction of a greedy drawing of $T$ by combining drawings of subtrees $T_0,\dots,T_{d-1}$ ($d=5$)}      
\label{fig:gluing}
\end{figure}

\section{Rooted trees that can be drawn with open angles}
\label{sec:open_angle}
N\"{o}llenburg and Prutkin\cite{NP17} introduced a procedure for computing the supremum of opening angles of a rooted tree.
By analyzing their results, we present an explicit description of rooted trees that can be drawn with an open angle.

To describe the result, we introduce some definitions.
Let $P$ be a path.
A tree $T$ is called a \emph{degree-$k$ caterpillar} associated with $P$ if $T$ is constructed by attaching at most $k-1$ leaves to each terminal vertex of $P$, and at most $k-2$ leaves  to each internal vertex of $P$.
We call the number of degree-$k$ vertices the \emph{weight} of $T$.
A leaf of $T$ that is adjacent to a terminal vertex of $P$ is called an \emph{end leaf} of $T$.

If a tree $T$ can be drawn with an opening angle $\varphi -\epsilon$ for any $\epsilon > 0$, but not $\varphi$, we indicate this information by writing $|\angle{T}|_* = \varphi^-$.
As subdividing a tree does not affect the supremum of opening angles, we describe the result under the assumption that a tree does not contain any degree-$2$ vertices.

Here, we review  results in \cite{NP17}.
Let $T$ be a rooted tree with degree-$1$ root.
If all vertices of $T$ are within distance $2$ from the root, i.e., if $T$ is a star, it is easy to determine $|\angle{T}|_*$.
We have $|\angle{T}|_* =180^\circ, (120^\circ)^-, (60^\circ)^-$ if  $T$ is either a single edge, a triple, or a quadruple respectively. 
If $T$ contains a vertex of degree greater than $5$, $T$ cannot be drawn with an open angle.
If there is a vertex of distance greater than $2$, one can compute $|\angle{T}|_*$  by applying the following lemma recursively.
We remark that if $|\angle{T}|_* \neq 180^\circ$, then it means the tree $T$ contains a vertex of degree~$\geq 3$, and thus $|\angle{T}|_* \leq 120^\circ$.
\begin{lem}(\cite[Lemmas~3.1--3.6]{NP17})
Let $T_i$, for $i=1,2,3$, be a rooted tree with a degree-$1$ common root $r'$ that contains no degree-$2$ vertices.
Suppose that each $T_i$ can be drawn with an open angle, and the supremum $\varphi_i \coloneqq |\angle{T_i}|_*$ of opening angles is not equal to $180^\circ$.
\begin{itemize}
\item[(I)] 
Let $T \coloneqq T_1 + ar' + rr'$ be the rooted tree with root $r$, as depicted in Figure~\ref{fig:case2_graph}. If $90^\circ < \varphi_1 \leq 120^\circ$, we have $|\angle{T}|_*  = (45^\circ+\frac{\varphi_1}{2})^-$.
If $\varphi_1 \leq 90^\circ$, we have $|\angle{T}|_*  = \varphi_1^-$ (see Figures~\ref{fig:case2_1} and \ref{fig:case2_2} for an intuition).
\item[(II)] 
Let $T \coloneqq T_1 + ar' + br'+ rr'$ be the rooted tree with root $r$, as depicted in Figure~\ref{fig:case3_graph}. If $0^\circ < \varphi_1 \leq 120^\circ$, then we have $|\angle{T}|_*  = (\frac{\varphi_1}{2})^-$ (see Figure~\ref{fig:case3} for an intuition).
\item[(III)] 
Let $T \coloneqq T_1 + T_2 + rr'$ be the rooted tree with root $r$, as depicted in Figure~\ref{fig:case4_graph}.
If $90^\circ < \varphi_1,\varphi_2 \leq 120^\circ$, we have $|\angle{T}|_*  = (\varphi_1+\varphi_2 -180^\circ)^-$. 
If $\varphi_1 \leq 90^\circ$ or $\varphi_2 \leq 90^\circ$, then $T$ cannot be drawn with an open angle
(see Figure~\ref{fig:case4} for an intuition).
\item[(IV)] 
Let $T \coloneqq T_1 + T_2 + ar'+rr'$ be the rooted tree with root $r$, as depicted in Figure~\ref{fig:case5_graph}.
If $90^\circ < \varphi_2 \leq \varphi_1 \leq 120^\circ$, we have $|\angle{T}|_*  = (\frac{3}{4}\varphi_1+\frac{1}{2}\varphi_2 -112.5^\circ)^-$. 
If $\varphi_1 \leq 90^\circ$ or $\varphi_2 \leq 90^\circ$, then $T$ cannot be drawn with an open angle
(see Figure~\ref{fig:case5} for an intuition).
\item[(V)] 
Let $T \coloneqq T_1 + T_2 + T_3+rr'$ be the rooted tree with root $r$, as depicted in Figure~\ref{fig:case6_graph}.
Then, $T$ cannot be drawn with an open angle. 
\end{itemize}
\label{lem:combination}
\end{lem}

\begin{figure}[h]
\begin{minipage}{0.33\linewidth}
\centering
\includegraphics[scale=0.2, bb = 0 0 343 208,clip]{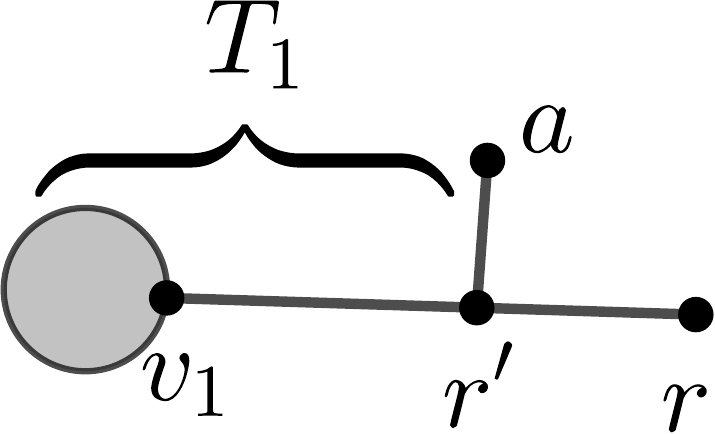}
\subcaption{Case~(I)}
\label{fig:case2_graph}
\end{minipage}
\begin{minipage}{0.33\linewidth}
\centering
\includegraphics[scale=0.2, bb = 0 0 355 281,clip]{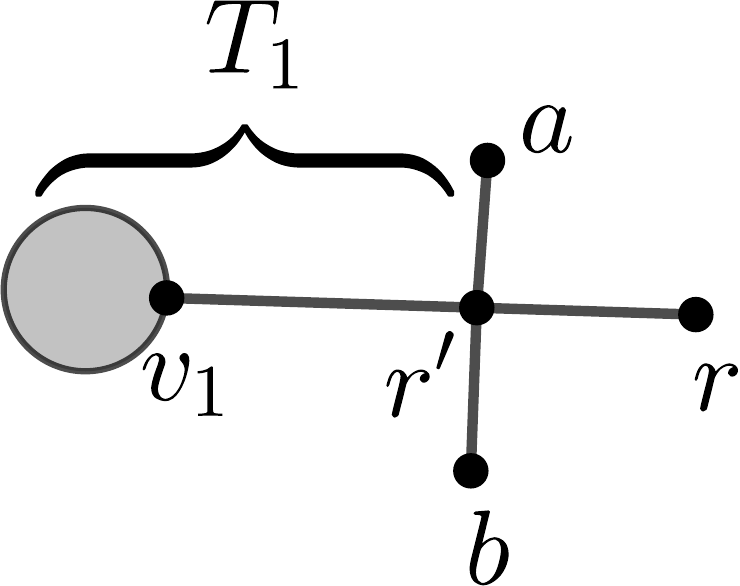}
\subcaption{Case~(II)}
\label{fig:case3_graph}
\end{minipage}
\begin{minipage}{0.33\linewidth}
\centering
\includegraphics[scale=0.2, bb =  0 0 387 459,clip]{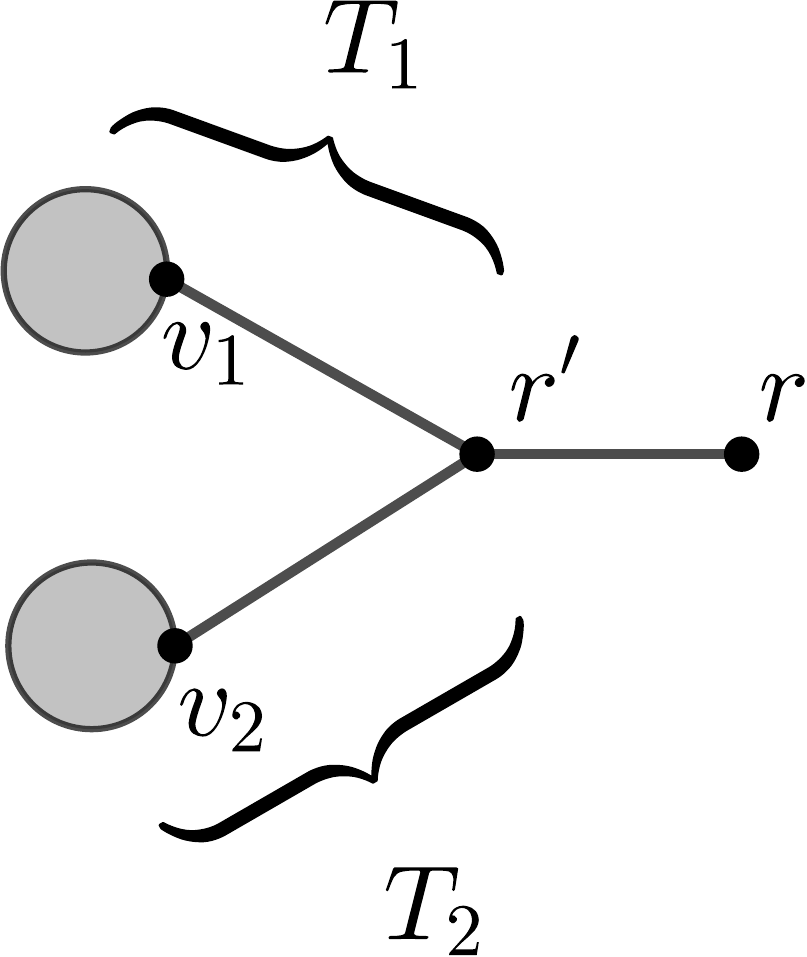}
\subcaption{Case~(III)}
\label{fig:case4_graph}
\end{minipage}
\begin{minipage}{0.33\linewidth}
\centering
\includegraphics[scale=0.2, bb =  0 0 389 459,clip]{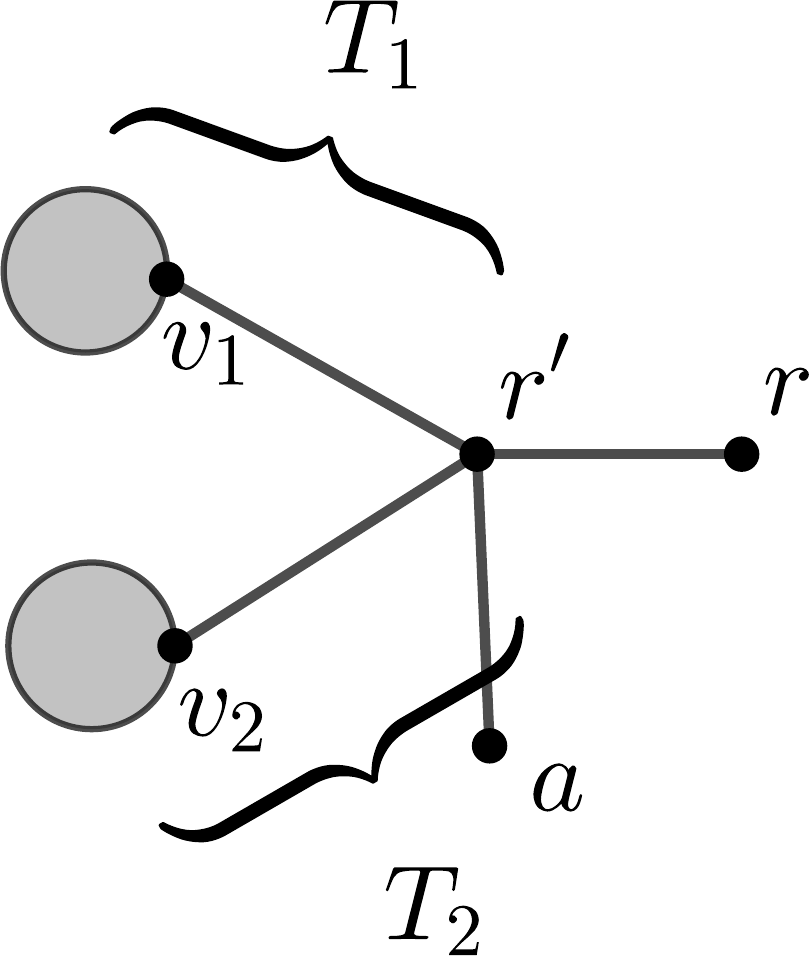}
\subcaption{Case~(IV)}
\label{fig:case5_graph}
\end{minipage}
\begin{minipage}{0.33\linewidth}
\centering
\includegraphics[scale=0.2, bb = 0 0 442 394,clip]{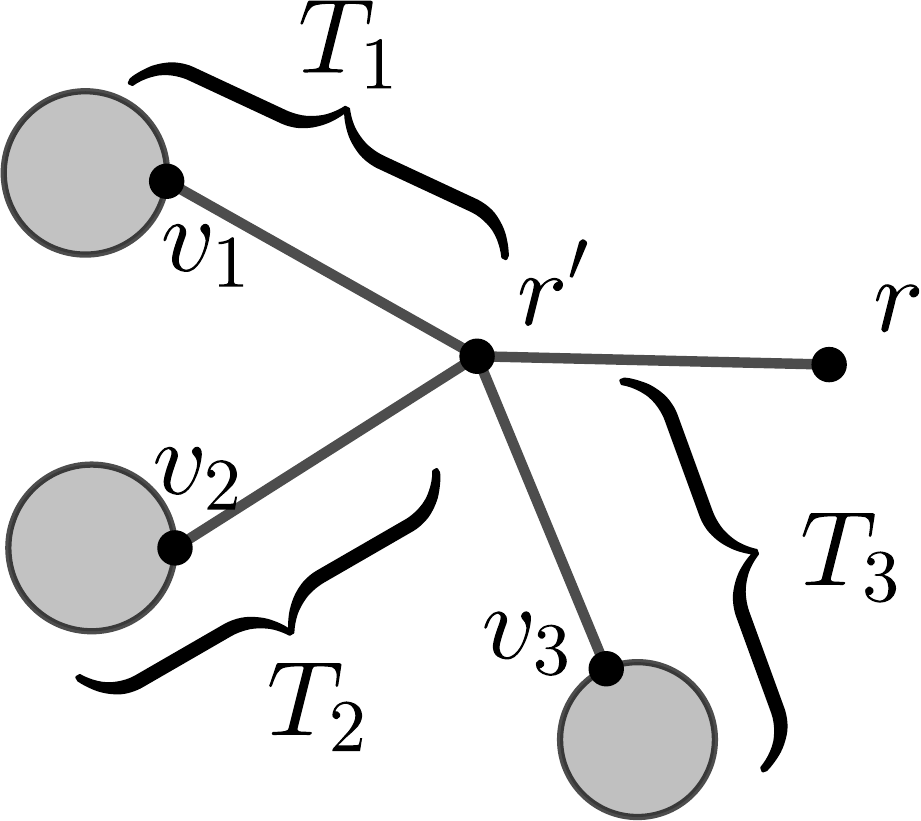}
\subcaption{Case~(V)}
\label{fig:case6_graph}
\end{minipage}
\caption{Rooted trees in Lemma~\ref{lem:combination}}
\end{figure}

\begin{figure}[h]
\begin{minipage}{0.33\linewidth}
\centering
\includegraphics[scale=0.15, bb =   0 0 846 948,clip]{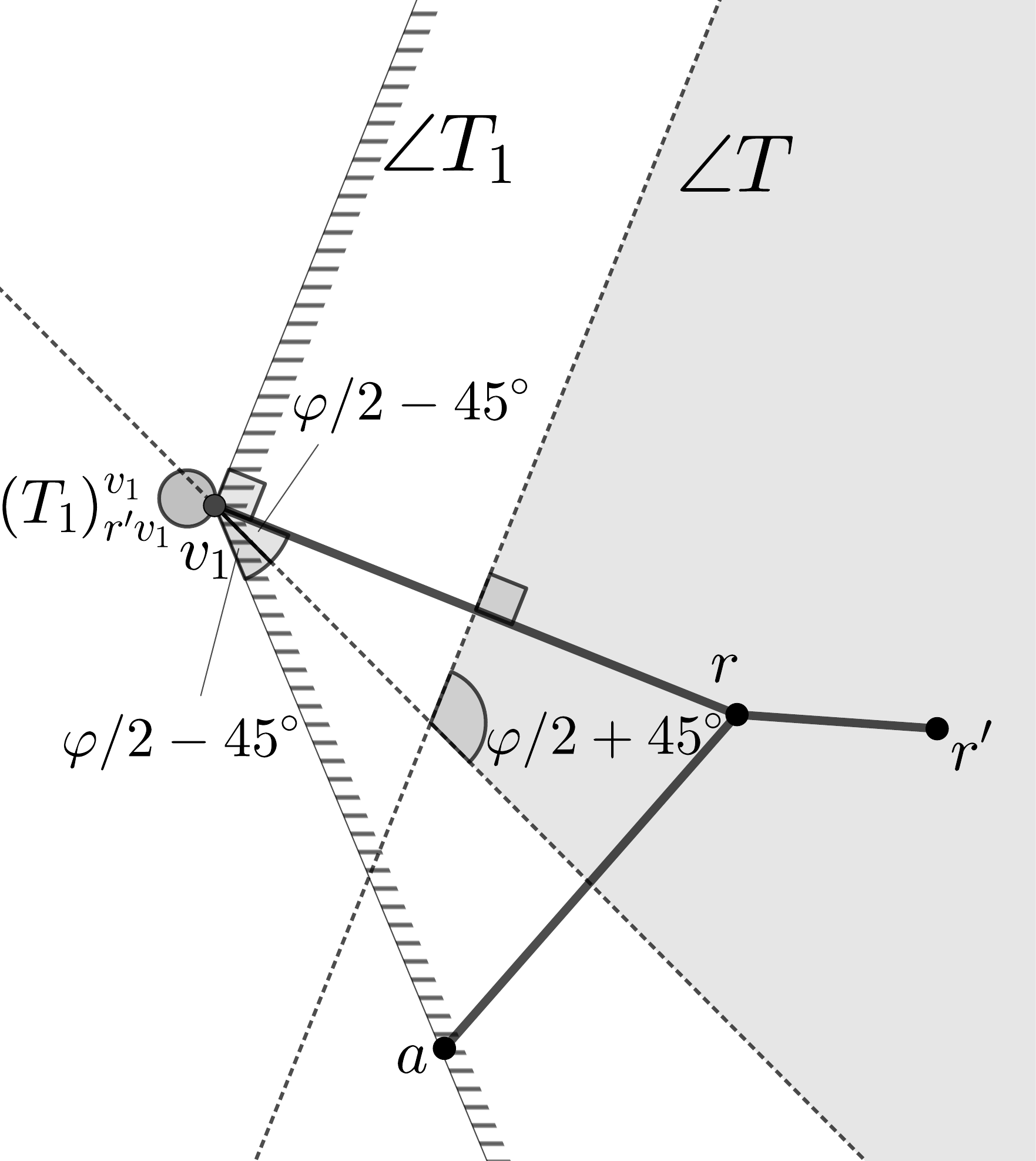}
\subcaption{Case~(I) ($\varphi_1 \geq 90^\circ$)}
\label{fig:case2_1}
\end{minipage}
\begin{minipage}{0.33\linewidth}
\centering
\includegraphics[scale=0.15, bb =  0 0 846 949,clip]{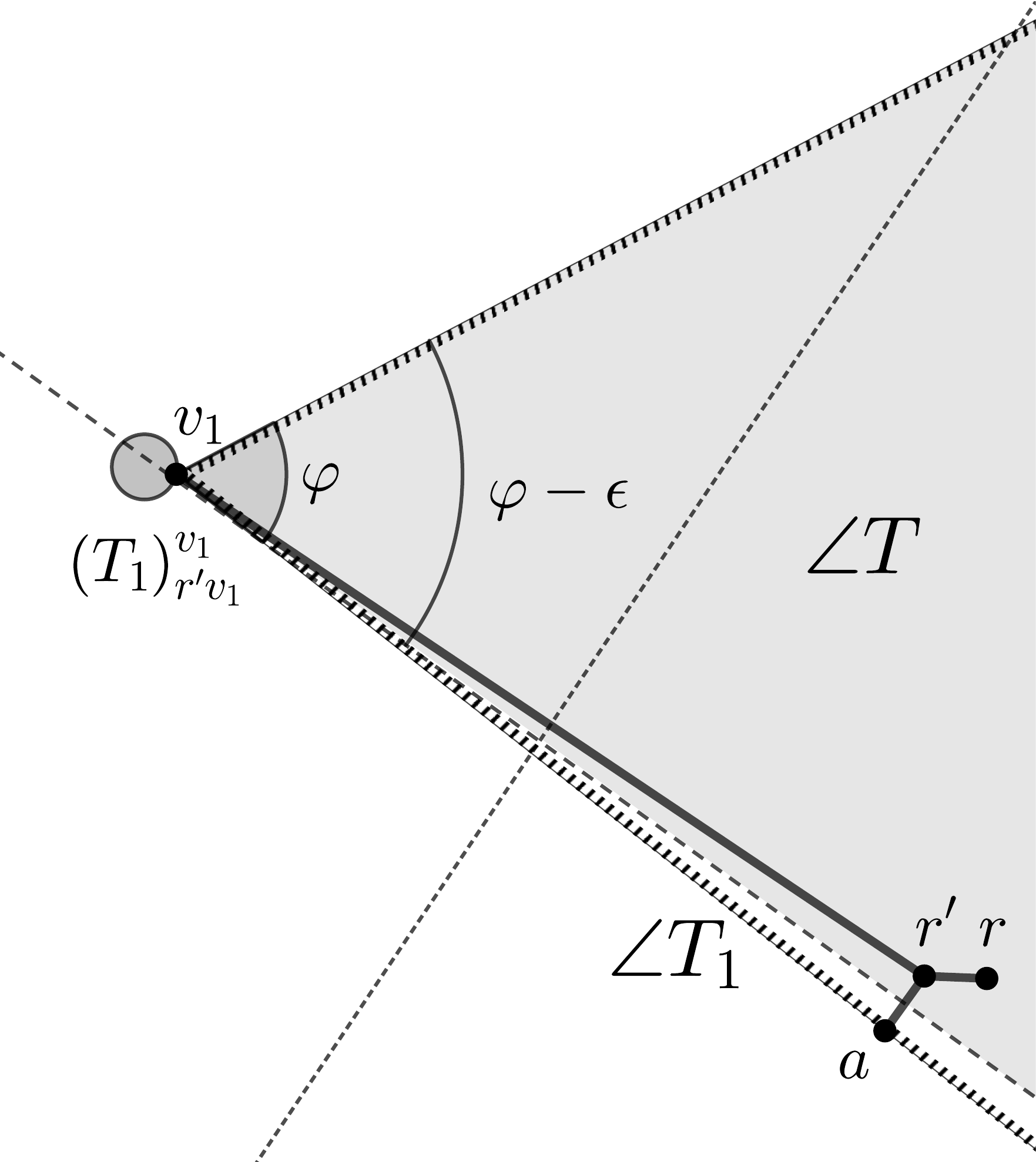}
\subcaption{Case~(I) ($\varphi_1 < 90^\circ$)}
\label{fig:case2_2}
\end{minipage}
\begin{minipage}{0.33\linewidth}
\centering
\includegraphics[scale=0.15, bb =  0 0 784 936,clip]{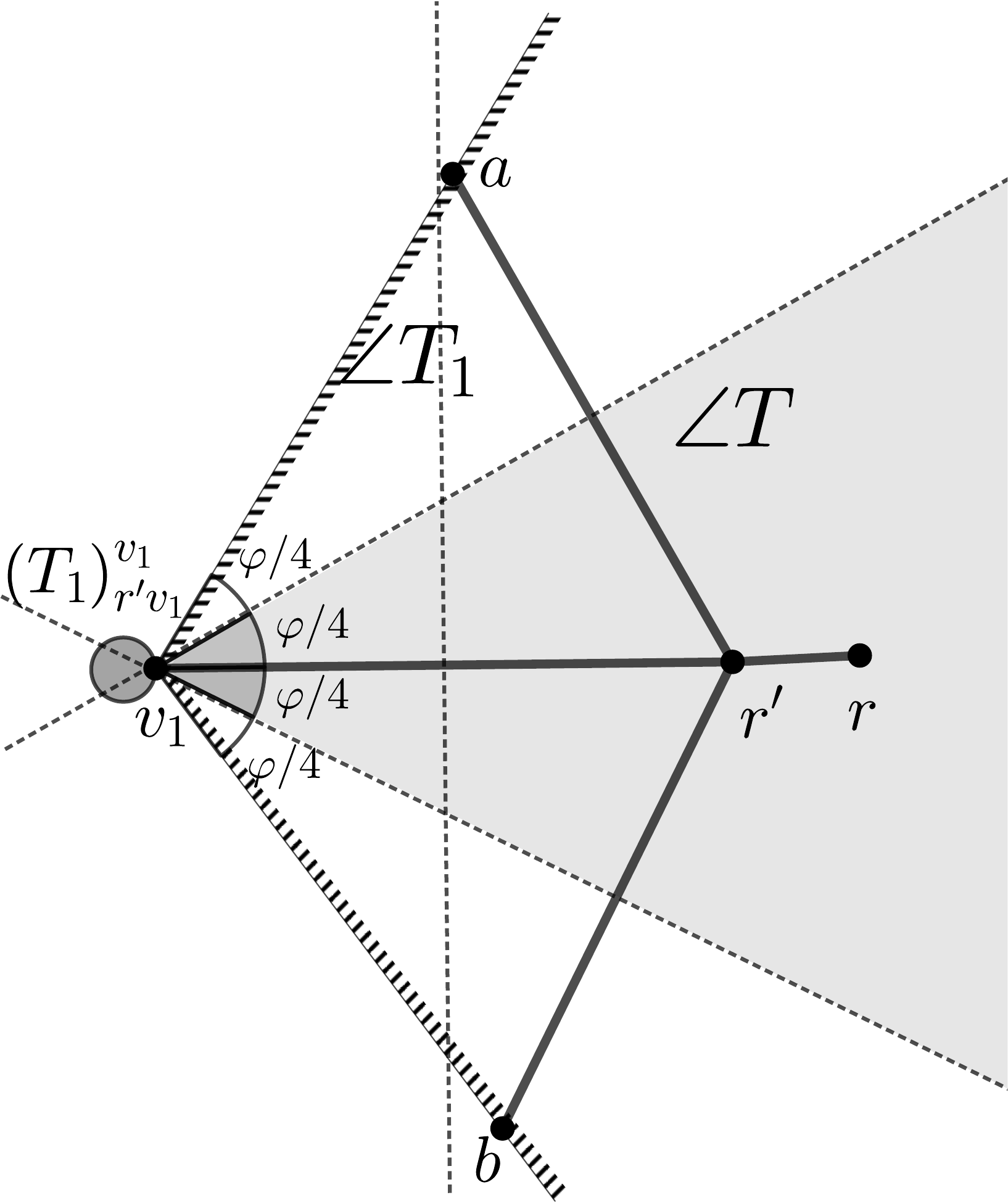}
\subcaption{Case~(II)}
\label{fig:case3}
\end{minipage}
\begin{minipage}{0.33\linewidth}
\centering
\includegraphics[scale=0.15, bb =  0 0 815 945,clip]{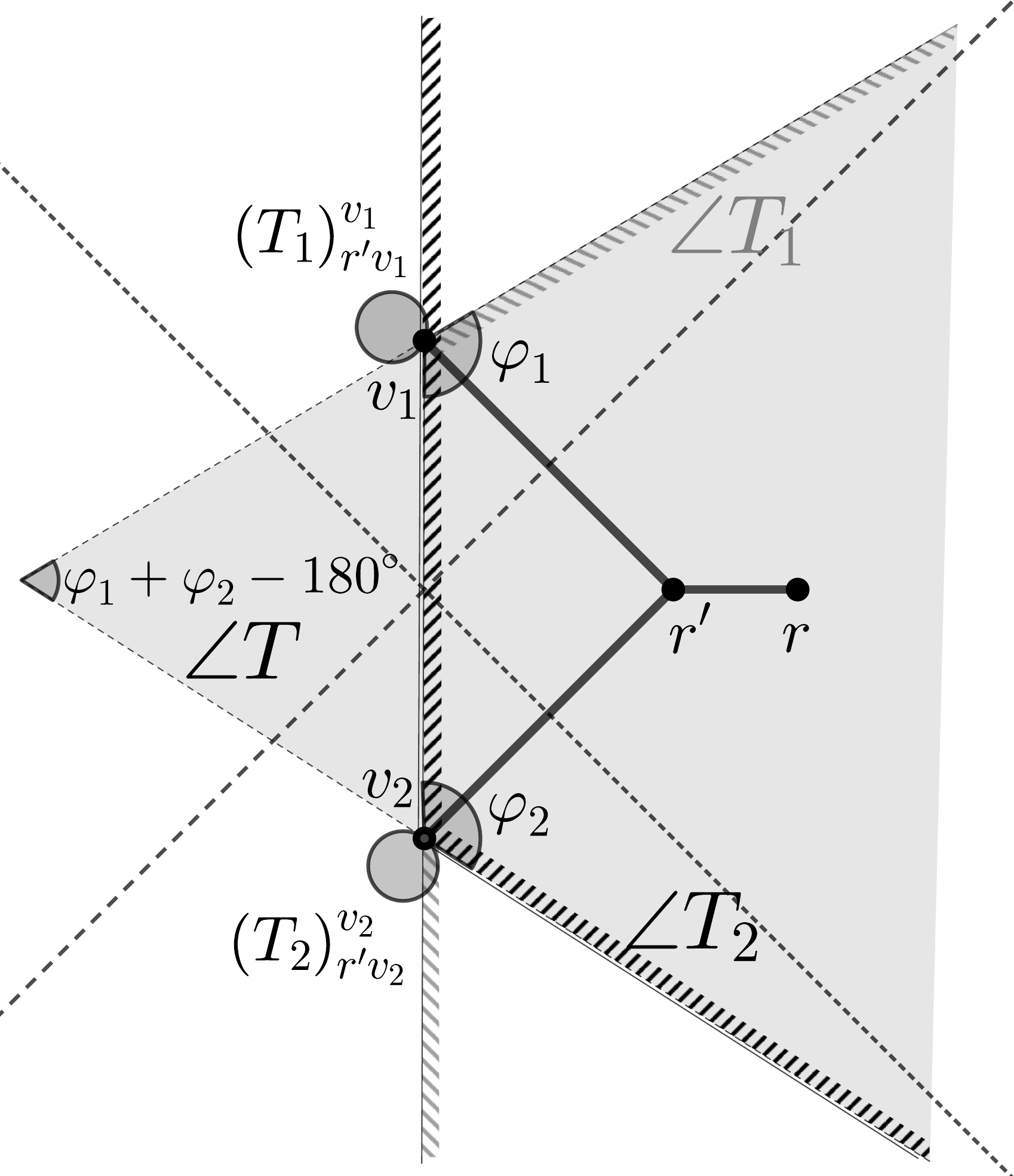}
\subcaption{Case~(III)}
\label{fig:case4}
\end{minipage}
\begin{minipage}{0.33\linewidth}
\centering
\includegraphics[scale=0.15, bb = 0 0 852 928,clip]{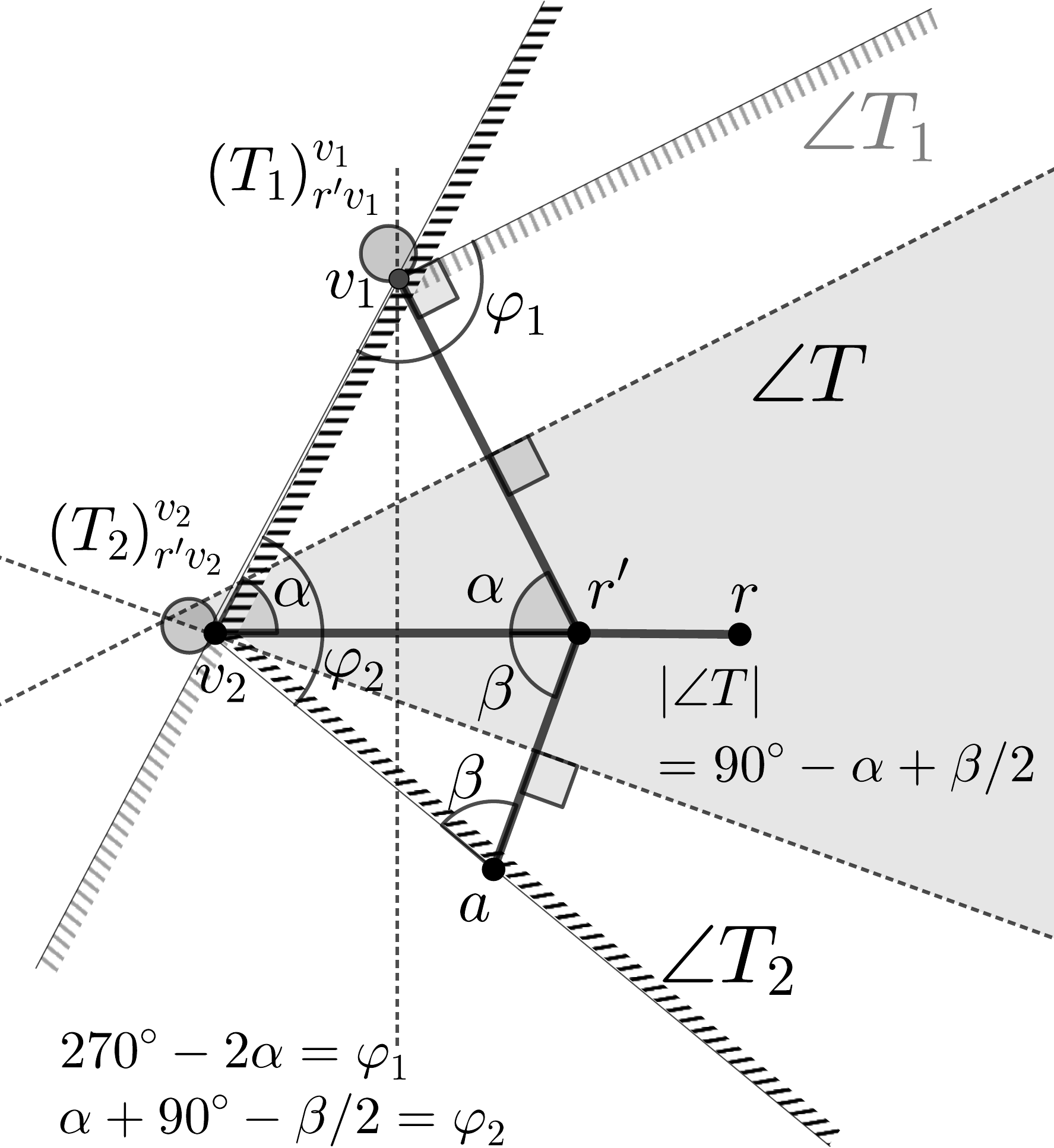}
\subcaption{ Case~(IV)}
\label{fig:case5}
\end{minipage}
\caption{Optimal constructions for Cases~(II)-(IV).  Each subtree $T^{v_i}_{r'v_i}$ is drawn infinitesimally small. One can construct proper greedy drawings by perturbating these drawings. For more details, see~\cite{NP17}.}
\end{figure}

For more details, see~\cite{NP17}. Using this lemma, we can prove the following proposition.

\begin{prop}\label{prop:open_angle}
Let $T$ be a rooted tree with degree-$1$ root $r$ that contains no degree-$2$ vertices.
Then, $T$ has a greedy drawing with an open angle if and only if $T$ satisfies one of the following conditions.

\begin{enumerate}
	\item[(A)] $T$ is a single edge.  In this case, we have $|\angle{T}|_*=180^\circ$ and call $T$ a type-$A$ tree. 
	 
	\item[(B)] $T$ is a degree-$3$ caterpillar of weight $n \geq 1$ and $r$ is an end leaf of $T$.  In this case, we have  $|\angle{T}|_* = (90^\circ + 60^\circ \times \frac{1}{2^n})^-$ and call $T$ a type-$B_n$ tree. See Figure~\ref{typeB}.
	
	\item[(C)] $T$ is a degree-$4$ caterpillar of weight $n \geq 1$ and
                     $r$ is an end leaf of $T$.  
                     Let $v$ be the degree-$4$ vertex farthest from $r$ (with respect to the graph distance),
                     and $k$ be the weight of the degree-$3$ path formed by the degree-$3$ vertices that are farther from $r$ than $v$.
                     In this case, we have  $|\angle{T}|_* =  (120^\circ )^- \times \frac{1}{2^n}$ if $k=0$ and  $|\angle{T}|_* =  (90^\circ + 60^\circ \times \frac{1}{2^k})^- \times \frac{1}{2^n}$ otherwise. We call $T$ a type-$C_{k,n}$ tree. See Figure \ref{typeC}.

	\item[(D)]  $T$ is obtained by attaching a degree-$4$ caterpillar of weight $n \geq 0$ and two degree-$3$ caterpillars of weight $k,l \geq 1$  ($l \geq k$) to a single vertex $v$, called the \emph{joint vertex}.
                     $r$ is an end leaf of the attached degree-$4$ caterpillar that is farthest from $v$  (with respect to the graph distance). 
                      In this case, we have $|\angle{T}|_* =  (60^\circ \times \frac{1}{2^k} +  60^\circ \times \frac{1}{2^l})^- \times \frac{1}{2^n}$ and call $T$ a type-$D_{k,l,n}$ tree.  See Figure \ref{typeD}.
	
	\item[(E)] $T$ is obtained by attaching a degree-$4$ caterpillar of weight $n \geq 0$ and two degree-$3$ caterpillars of weight $k,l \geq 1$ ($l \geq k$) and a single edge to a single vertex $v$, called  the \emph{joint vertex}.
                     $r$ is an end leaf of the attached degree-$4$ caterpillar that is farthest from $v$ (with respect to the graph distance). 
                    In this case, we have $|\angle{T}|_* =  (45^\circ \times \frac{1}{2^k} +  30^\circ \times \frac{1}{2^l})^- \times \frac{1}{2^n}$ and call $T$ a type-$E_{k,l,n}$ tree.  See Figure \ref{typeE}.

\end{enumerate}

\end{prop}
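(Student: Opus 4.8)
My plan is to extract an explicit combinatorial description from the recursive procedure of N\"ollenburg and Prutkin for computing $|\angle T|_*$. After deleting degree-$2$ vertices — which, as already remarked, affects neither greedy-drawability nor the supremum of opening angles — that procedure works from the leaves of $T$ toward the root: at a vertex $v$ whose down-subtrees $S_1,\dots,S_m$ (each rooted at $v$) have already been assigned values $|\angle{S_1}|_*,\dots,|\angle{S_m}|_*$, the supremum of opening angles of the piece of $T$ lying above and including $v$ is obtained by maximizing the width of a ``free'' angular slot at $v$ subject to the wheel-graph angle system localized at $v$, i.e.\ the analogue of~(\ref{ineq_main}) for $W_{m+1}$ with the $m$ subtrees shrunk via Lemma~\ref{lem:shrinking} into the remaining slots. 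The first job is to solve this local optimization once and for all. Its only genuine difficulty is the transcendental constraint $\prod_i\sin\beta_i=\prod_i\sin\gamma_i$; I would handle it by showing that at the maximum the configuration degenerates (one factor on each side tending to zero), after which the system becomes piecewise linear and one reads off clean rules: each additional vertex of a degree-$3$ or degree-$4$ path multiplies the running budget by $\tfrac12$ (this is the source of the $\tfrac1{2^n},\tfrac1{2^k},\tfrac1{2^l}$ factors), a pendant edge, a pendant degree-$3$ path, or a short degree-$3$ branch contributes a fixed additive term ($90^\circ$, $60^\circ$, $45^\circ$ or $30^\circ$, the asymmetric $45^\circ/30^\circ$ split in~(E) being exactly the degenerate optimum when a branch vertex carries three usable children), and every merge can only strictly decrease a budget (which is what the ``$^-$'' superscripts record). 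Monotonicity of the local system in the children's angles, proved separately, lets these rules be applied greedily.

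With the rules in hand the classification is a finite induction on $|V(T)|$. Since $r$ has degree $1$, its unique neighbor $v$ is either a leaf — case~(A), $|\angle T|_* = 180^\circ$ — or a vertex with $m\ge 2$ down-subtrees, each of which, by the inductive hypothesis, is of type $A$, $B_\bullet$, $C_{\bullet,\bullet}$, $D_{\bullet,\bullet,\bullet}$, $E_{\bullet,\bullet,\bullet}$, or has a closed angle and is therefore useless. Running the merge rule over all admissible multisets of children and demanding a strictly positive output collapses the possibilities: it forces $m\le 3$, forbids any vertex of degree $\ge 5$ (so in particular the maximum degree of an open-angle tree is less than $6$, as in~\cite{PR05}), allows at most one vertex of degree $\ge 4$ on a root-to-leaf path and only as the one nearest the root, and fixes exactly which combinations of degree-$3$ paths and pendant edges may hang at a branch vertex. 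Rephrasing the surviving configurations in terms of degree-$k$ paths reproduces precisely the families (A)--(E), and substituting each of them back into the path and merge rules gives the stated value of $|\angle T|_*$ together with the non-attainment encoded by ``$^-$''.

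I expect the main obstacle to be the converse statement — that every rooted tree \emph{not} on the list (A)--(E) has a closed angle, equivalently that $\mathrm{polytope}(\cdot)$ is bounded for it. One must verify that inserting any forbidden feature — a third degree-$3$ path at a branch vertex already carrying two, a degree-$3$ path at an interior vertex of a degree-$4$ path, a second degree-$4$ vertex below the outermost one, a degree-$5$ vertex, or merely one extra pendant leaf somewhere — renders the corresponding instance of the localized wheel system~(\ref{ineq_main})-analogue infeasible. I anticipate this reduces to a small repertoire of base estimates comparing $\prod_i\sin\beta_i$ with $\prod_i\sin\gamma_i$ under $\sum_i\alpha_i=360^\circ$ and $\beta_i,\gamma_i<\alpha_i$, together with the observation that the $\tfrac12$-factor of a path edge only shrinks an already non-positive budget further. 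The accounting is finite but brittle at the boundary: the entire content is that, say, $B_n$ stays open whereas attaching one more leaf at its far end does not, so the real work lies in pinning down every such borderline case correctly.
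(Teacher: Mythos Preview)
Your approach is the paper's: the paper gives no proof of this proposition at all, simply citing Lemmas~10--15 of \cite{NP17} (the recursive computation of $|\angle T|_*$) and declaring the result ``easy'' from those. Your plan to re-derive the local merge rules and then unwind them by induction on $|V(T)|$ is exactly the content of those lemmas, so there is no divergence of method.

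That said, your sketch contains a factual error that would derail the induction if taken at face value. You write that the merge rule ``allows at most one vertex of degree $\geq 4$ on a root-to-leaf path and only as the one nearest the root.'' This is false: in types $C_{k,n}$, $D_{k,l,n}$, and $E_{k,l,n}$ the parameter $n$ counts the degree-$4$ vertices along the degree-$4$ path between $r$ and the branch vertex $v$, and $n$ can be arbitrarily large. Each such vertex contributes a factor $\tfrac12$ to the angle (your halving rule is correct), but there is no bound on how many occur in succession. What \emph{is} true---and what you may have been reaching for---is that once the root-to-leaf path leaves the degree-$4$ spine and enters a degree-$3$ branch (types $D$, $E$) or the degree-$3$ tail (type $C$), no further degree-$4$ vertex can appear below; equivalently, the children at a degree-$4$ interior vertex of the spine must be two pendant leaves plus one subtree, not a second nontrivial branch. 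You should rephrase the structural constraint in those terms before carrying out the case analysis, or the forbidden-feature list in your converse argument will be incomplete.
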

\begin{figure}[htb]
\begin{minipage}[t]{0.45\linewidth}
\centering
\includegraphics[scale=0.25, bb =0 0 432 84, clip]{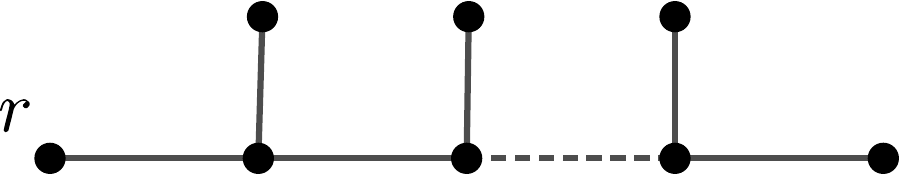} 
  \caption{Type-$B_n$ tree}      
\label{typeB}
\end{minipage}
\begin{minipage}[t]{0.45\linewidth}
\centering
\includegraphics[scale=0.25,bb = 0 0 432 116, clip]{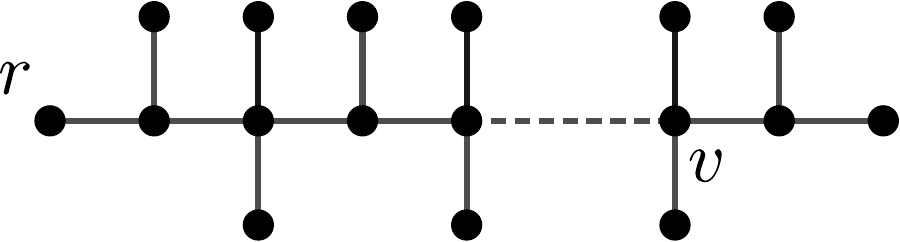} 
    \caption{Type-$C_{k,n}$ tree} 
\label{typeC}
\end{minipage}
\\
\\
\\
\begin{minipage}[t]{0.45\linewidth}
\centering
\includegraphics[scale=0.25, bb =  0 0 382 316, clip]{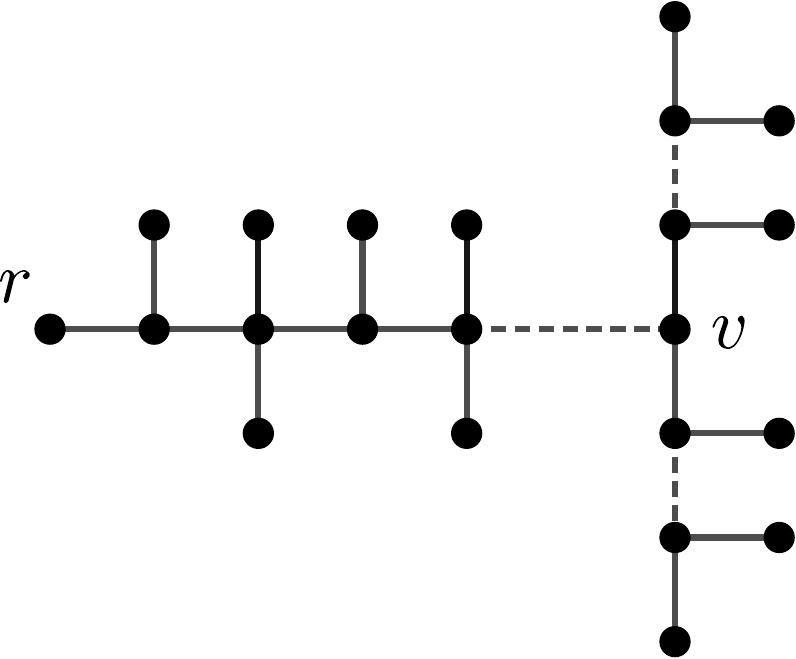} 
    \caption{Type-$D_{k,l,n}$ tree} 
\label{typeD}
\end{minipage}
\begin{minipage}[t]{0.45\linewidth}
\centering
\includegraphics[scale=0.25, bb = 0 0 382 316, clip]{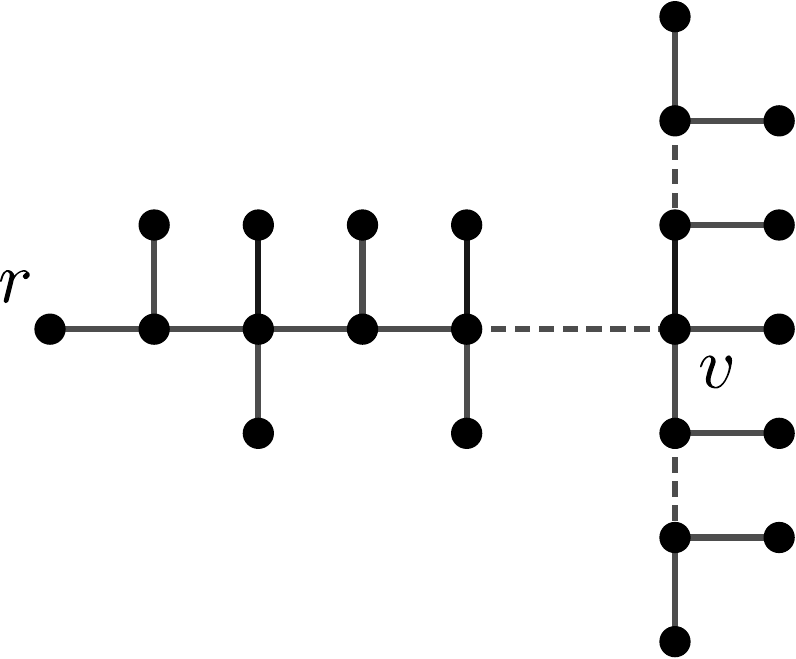} 
  \caption{Type-$E_{k,l,n}$ tree}      
\label{typeE}
\end{minipage}
\end{figure}
\begin{proof}
Let $r'$ be the neighboring vertex of $r$, and $T'$ the tree obtained from $T$ by removing $r$ and the leaves adjacent to $r'$.
Let $t$ be the (graph) distance between $r$ and a farthest vertex of $T$ from $r$.
We prove the following two claims separately.
\begin{itemize}
\item
Each tree $T$ in Cases~(A)--(E) has a greedy drawing with an open angle, and the supremum of opening angles is as described in the proposition.
\item
If $T$ has a greedy drawing with an open angle, then $T$ must be one of the trees that appear in Cases~(A)--(E).
\end{itemize}
We  first prove the first claim by induction on $t$.
Suppose $T$ is a tree that appears in one of Cases~(A)--(E).
If $t=2$, then $T$ must be a type-$A$, $B_1$, or $C_{0,1}$ tree, and the claim is clearly true.
Now we consider the case $t > 2$ by considering each type separately.

\begin{description}[itemsep=0cm,leftmargin=0cm,topsep=1ex]
\item[Case 1: $T$ is the type-$B_n$ tree.]
In this case, $T' $ is the type-$B_{n-1}$ tree.
By induction hypothesis, $T'$ has a greedy drawing with an open angle, and $|\angle{T'}|_*  = (90^\circ + 60^\circ \times \frac{1}{2^{n-1}})^-$.
Therefore, $T$ has a greedy drawing with an open angle, and we have $|\angle{T}|_* = (90^\circ + 60^\circ \times \frac{1}{2^{n}})^-$ by Lemma~\ref{lem:combination}~(I).

\item[Case 2:  $T$ is a type-$C_{k,n}$ tree.]
Let us first assume that the degree of $r'$ is $3$. Then,  the tree $T'$ is a type-$C_{k,n}$ tree.
By induction hypothesis,  we have $|\angle{T'}|_* =  (120^\circ )^- \times \frac{1}{2^n}$ if $k=0$ and  $|\angle{T'}|_* =  (90^\circ + 60^\circ \times \frac{1}{2^k})^- \times \frac{1}{2^n}$ otherwise.
By Lemma~\ref{lem:combination}~(I), $T$ has a greedy drawing with an open angle, and $|\angle{T}|_*=|\angle{T'}|_*$, which implies that $|\angle{T}|_*$ is as described in the proposition. 
We next assume that the degree of $r'$ is $4$.
Then, if $n \geq 2$, $T'$ is a type-$C_{k,n-1}$ tree, and we have $|\angle{T'}|_* =  (120^\circ )^- \times \frac{1}{2^{n-1}}$ if $k=0$, and  $|\angle{T'}|_* =  (90^\circ + 60^\circ \times \frac{1}{2^k})^- \times \frac{1}{2^{n-1}}$ otherwise, by induction hypothesis.
If $n=1$, then $T'$ is a type-$B_k$ tree, and we have $|\angle{T'}|_* =  (90^\circ + 60^\circ \times \frac{1}{2^k})^-$ by induction hypothesis.
By Lemma~\ref{lem:combination}~(II), $T$ has a greedy drawing with an open angle in every case,  and $|\angle{T}|_*=\frac{1}{2}|\angle{T'}|$, which implies 
$|\angle{T}|_* =  (120^\circ )^- \times \frac{1}{2^{n}}$ if $k=0$ and  $|\angle{T}|_* =  (90^\circ + 60^\circ \times \frac{1}{2^k})^- \times \frac{1}{2^{n}}$ otherwise.
Since the degree of $r'$ must be either $3$ or $4$, the claim is true in Case~2.
\item[Case 3: $T$ is a type-$D_{k,l,n}$ tree.]
We first assume that $r'$ is the joint vertex (and thus $n=0$). Then, $T$ is the tree in Case~(III) where $T_1$ and $T_2$ are the type-$B_k$ and type-$B_l$ trees respectively.
By induction hypothesis, we have  $|\angle{T_1}|_* =  (90^\circ + 60^\circ \times \frac{1}{2^{k}})^-$ and  $|\angle{T_2}|_* = (90^\circ + 60^\circ \times \frac{1}{2^{l}})^-$.
Thus, we have $|\angle{T}|_* =  (90^\circ + 60^\circ \times \frac{1}{2^{k}})^- +  (90^\circ + 60^\circ \times \frac{1}{2^{l}})^- -180^\circ = (60^\circ \times \frac{1}{2^k} +  60^\circ \times \frac{1}{2^l})^-$ by Lemma~\ref{lem:combination}~(III).
Next, we consider the case that $r'$ is not the joint vertex.
Then, $T'$ is either a type-$D_{k,l,n}$ tree (if the degree of $r'$ is $3$) or a  type-$D_{k,l,n-1}$ tree (if the degree of $r'$ is $4$).
By induction hypothesis and Lemma~\ref{lem:combination}~(I) and (II), in either case, $T$ has a greedy drawing with an open angle, and we have $|\angle{T}|_* = (60^\circ \times \frac{1}{2^k} +  60^\circ \times \frac{1}{2^l})^- \times \frac{1}{2^n}$.
Since the degree of $r'$ must be either $3$ or $4$, the claim is true in Case~3.
\item[Case 4: $T$ is a type-$E_{k,l,n}$ tree.]
If $r'$ is the joint vertex (and thus $n=0$), then $T$ is the tree in Case~(IV) where $T_1$ and $T_2$ are type-$B_k$ and type-$B_l$ trees respectively.
Thus, we have $|\angle{T}|_* = \frac{3}{4} \times (90^\circ+60^\circ \times \frac{1}{2^k}) + \frac{1}{2} \times (90^\circ+60^\circ \times \frac{1}{2^l}) = 45^\circ \times \frac{1}{2^k} + 30^\circ \times  \frac{1}{2^l}$ by induction hypothesis and Lemma~\ref{lem:combination}~(IV).
Next, we consider the case that $r'$ is not the joint vertex.
Then, $T'$ is either a type-$E_{k,l,n}$ tree (if the degree of $r'$ is $3$) or a type-$E_{k,l,n-1}$ tree (if the degree of $r'$ is $4$).
By induction hypothesis and Lemma~\ref{lem:combination}~(I) and (II), in either case,  $T$ has a greedy drawing with an open angle, and we have $|\angle{T}|_* =  (45^\circ \times \frac{1}{2^k} +  30^\circ \times \frac{1}{2^l})^- \times \frac{1}{2^n}$.
Since the degree of $r'$ must be either $3$ or $4$, the claim is true in Case~4.
\end{description}
Therefore, the first claim is proved for all cases.
Next, we prove the second claim by induction on $t$.
Suppose that $T$ has a greedy drawing with an open angle.
If $t=2$, then $T$ must be a single edge (type-$A$ tree), a triple (type-$B_1$ tree), or a quadruple (type-$C_{0,1}$ tree), and hence the claim is true.
Now we assume that  $t > 2$.
By Lemma~\ref{lem:combination}, $T$ must be one of the forms in Case~(I)--(IV) in Lemma~\ref{lem:combination}. 
We consider each case separately.

\begin{description}[itemsep=0cm,leftmargin=0cm,topsep=1ex]
\item[Case 1:  $T$ is of the form in Case~(I).]
By induction hypothesis, $T_1$ must be one of the trees appearing in Cases~(B)--(E).
Suppose $T_1$ is the type-$B_n$ tree. Then, $T$ is the type-$B_{n+1}$ tree.
If $T_1$ is  a type-$C_{k,n}$, $D_{k,l,n}$, or $E_{k,l,n}$ tree,
then $T$ is also a type-$C_{k,n}$, $D_{k,l,n}$, or $E_{k,l,n}$ tree respectively.
Therefore, in every case, $T$ is one of the trees appearing in Cases~(B)--(E).

\item[Case 2:  $T$ is of the form in Case~(II).]
By induction hypothesis, $T_1$ must be one of the trees appearing in Cases~(B)--(E).
Suppose $T_1$ is the type-$B_n$ tree. Then, $T$ is a type-$C_{n,1}$ tree.
If $T_1$ is a type-$C_{k,n}$, $D_{k,l,n}$, or $E_{k,l,n}$ tree,
then $T$ is a type-$C_{k,n+1}$, $D_{k,l,n+1}$, or $E_{k,l,n+1}$ tree respectively.
Thus,  in every case, $T$ is one of the trees appearing in Cases~(B)--(E).

\item[Case 3:  $T$ is of the form in Case~(III) or (IV).]
Since $|\angle{T_1}|_*,|\angle{T_2}|_* > 90^\circ$, subtrees $T_1$ and $T_2$ must be type-$B_n$ and type-$B_{n'}$ trees for some $n,n' \geq 1$ by induction hypothesis.
Thus, $T$ is a type-$D_{n,n',0}$ tree in Case~(III) and a type-$E_{n,n',0}$ tree in Case~(IV).
Thus,  in every case, $T$ is one of the trees appearing in Cases~(D) and (E).
\end{description}
Therefore, the second claim is proved for all cases.
\end{proof}
\\
\\
With this proposition, we can completely determine the rooted trees that have greedy drawings with opening angles greater than $7.5^\circ$.
The classification is described in Table~\ref{table:open_angle}.

\begin{table}[h]
\centering
  \begin{tabular}{|c|c|}
  \hline
   $|\angle{T}|_*=180^\circ$ & $A$ \\
   \hline
   $|\angle{T}|_*=(120^\circ)^-$ & $B_1$ \\
     \hline
   $|\angle{T}|_*=(105^\circ)^-$ & $B_2$ \\
     \hline
   $90^\circ < |\angle{T}|_* \leq 97.5^\circ$ & $B_n$ ($n \geq 3$) \\
     \hline
   $|\angle{T}|_* = (60^\circ)^-$ & $C_{0,1}$, $C_{1,1}$, $D_{1,1,0}$ \\
     \hline
   $45^\circ < |\angle{T}|_* < 60^\circ$ & $C_{k,1}$ ($k \geq 2$) \\
   \hline
   $|\angle{T}|_* = (45^\circ)^-$ & $D_{1,2,0}$ \\
     \hline
   $|\angle{T}|_*=(37.5^\circ)^-$ & $D_{1,3,0}$, $E_{1,1,0}$  \\
     \hline
   $30^\circ < |\angle{T}|_* \leq 33.75^\circ$ &  $D_{1,l,0}$ ($l \geq 4$) \\
   \hline
   $|\angle{T}|_*=(30^\circ)^-$ & $C_{0,2}$, $C_{1,2}$, $D_{2,2,0}$, $D_{1,1,1}$, $E_{1,2,0}$ \\
     \hline
    $22.5^\circ < |\angle{T}|_* \leq 26.25^\circ$ & $C_{k,2}$ ($k \geq 2$), $E_{1,l,0}$ ($l \geq 3$)  \\
     \hline
   $15^\circ < |\angle{T}|_* \leq 22.5^\circ$ & $D_{1,l,1}$ ($l \geq 2$), $D_{2,l,0}$ ($l \geq 3$),  $E_{1,1,1}$, $E_{2,2,0}$ 
   \\ \hline
   $|\angle{T}|_* = (15^\circ)^-$ & $C_{0,3}$, $C_{1,3}$,  $D_{3,3,0}$, $D_{2,2,1}$, $D_{1,1,2}$ $E_{2,3,0}$,  $E_{1,2,1}$
   \\ \hline
   $|\angle{T}|_* = (13.125^\circ)^-$ & $C_{2,3}$, $E_{2,4,0}$, $E_{1,3,1}$
   \\ \hline
   $11.25^\circ < |\angle{T}|_* < 13.125^\circ$ & $C_{k,3}$  ($k \geq 3$),  $E_{2,l,0}$ ($l \geq 5$), $E_{1,l,1}$ ($l \geq 4$)  
   \\ \hline
   $7.5^\circ < |\angle{T}|_* \leq 11.25^\circ$ &  $D_{3,l,0}$ ($l \geq 4$), $D_{2,l,1}$ ($l \geq 3$), $D_{1,l,2}$ ($l \geq 2$), $E_{3,3,0}$, $E_{2,2,1}$, $E_{1,1,2}$ \\
     \hline
  \end{tabular}
  \caption{Classification of rooted trees with the supremum of opening angles $> 7.5^\circ$}
    \label{table:open_angle}
\end{table}
For $I \subset [0^\circ, 180^\circ]$, we say that a rooted tree $T$ has \emph{angle type $I$} if the supremum of opening angles of $T$ is in the range $I$. 
For any $I \subset (7.5^\circ, 180^\circ]$, we can obtain an explicit description of the rooted trees of angle type $I$ using Table~\ref{table:open_angle}.
For example, the root trees having angle type $[45^\circ, 90^\circ]$ are $C_{k,1}$ ($k \geq 0$), $D_{1,1,0}$, $D_{1,2,0}$.
We denote by $\Phi$ the set of the suprema of opening angles of rooted trees, and by $\tilde{\Phi}$ the topological closure of $\Phi$.

N\"{o}llenburg and Prutkin~\cite[Proposition~6.2]{NP17} implicitly proved that any greedy-drawable tree can be constructed by conjoining some rooted trees that can be drawn with open angles.
Here, we make this fact more explicit.
Let $T_1',T'_2$ be subtrees of a tree $T$ with degree-$1$ roots $r'_1,r'_2$, and $v'_1,v'_2$ be the neighbors of $r'_1,r'_2$ respectively.
The subtrees $T'_1$ and $T'_2$ are said to be \emph{independent} if $T'_2 \setminus \{ r'_2 \} \subset T^{r'_1}_{v'_1r'_1}$ and $T'_1 \setminus \{ r'_1\} \subset T^{r'_2}_{v'_2r'_2}$.
(It may be the case that $v'_1 = r'_2$ and $v'_2 = r'_1$.)
We use the following fact, proved in~\cite{NP17}.
\begin{lem}(\cite[Lemma~2.15]{NP17})
Let $T$ be a greedy-drawable tree and $T'_1, T'_2$ be independent subtrees of $T$.
In any greedy drawing of $T$, we have $|\angle{T'_1}|>0$ or $|\angle{T'_2}|>0$.
\label{lem:open_closed}
\end{lem}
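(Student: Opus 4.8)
\quad The plan is to argue by contradiction. Suppose $\Gamma$ is a greedy drawing of $T$ in which \emph{both} $T'_1$ and $T'_2$ are drawn with closed angles; that is, $\mathrm{polytope}(T'_1)$ and $\mathrm{polytope}(T'_2)$ are both bounded (hence convex polygons).

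\emph{Step 1 (geometry of greediness).} Set $\ell_i:=\mathrm{axis}(r'_iv'_i)$. Using the half-plane characterization of greedy drawings (``$\Gamma$ is greedy iff $T^u_{uw}\subseteq h^u_{uw}$ for every edge $uw$''), together with the fact that $r'_i$ has degree $1$ in $T'_i$ and the independence hypotheses, I would first establish: (i) $\ell_1$ strictly separates $T'_1\setminus\{r'_1\}$ from $\{r'_1\}\cup(T'_2\setminus\{r'_2\})$, and $\ell_2$ strictly separates $T'_2\setminus\{r'_2\}$ from $\{r'_2\}\cup(T'_1\setminus\{r'_1\})$; moreover $T'_1\setminus\{r'_1\}$ lies in the wedge $W_1:=h^{v'_1}_{r'_1v'_1}\cap h^{r'_2}_{r'_2v'_2}$ and $T'_2\setminus\{r'_2\}$ in the ``opposite'' wedge $W_2:=h^{r'_1}_{r'_1v'_1}\cap h^{v'_2}_{r'_2v'_2}$ (these degenerate to complementary half-planes when $r'_1v'_1=r'_2v'_2$, i.e.\ $v'_1=r'_2$ and $v'_2=r'_1$, in which case $\ell_1=\ell_2$). (ii) For every edge $uw$ of $T'_1$ other than $r'_1v'_1$, with $w$ the endpoint closer to $r'_1$, deleting $uw$ from $T$ leaves $r'_1$, $v'_1$, $r'_2$, $v'_2$, the whole drawn path from $r'_1$ to $r'_2$, and all of $T'_2\setminus\{r'_2\}$ on $w$'s side; hence all of these lie in $h^w_{uw}$, and therefore in $\mathrm{polytope}(T'_1)$. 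Symmetrically, $\mathrm{polytope}(T'_2)$ contains $r'_1$, $v'_1$, $r'_2$, $v'_2$, the drawn $r'_1$--$r'_2$ path, and all of $T'_1\setminus\{r'_1\}$. In particular $T'_1\subseteq\mathrm{polytope}(T'_2)$, $T'_2\subseteq\mathrm{polytope}(T'_1)$, and $\{r'_1,v'_1,r'_2,v'_2\}$ together with the drawn $r'_1$--$r'_2$ path lies in the bounded set $\mathrm{polytope}(T'_1)\cap\mathrm{polytope}(T'_2)$.

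\emph{Step 2 (the contradiction).} Here I would exploit planarity. Since $\mathrm{polytope}(T'_2)$ is bounded, the inward normals of its defining half-planes $h^w_{uw}$ (one per edge of $T'_2$ other than $r'_2v'_2$) positively span the plane, although all of these edges are drawn inside the single wedge $W_2$, which lies on the $r'_1$-side of $\ell_1$. I would pick the edge $u^*w^*$ of $T'_2$ for which $\mathrm{axis}(u^*w^*)$ is the supporting line of $\mathrm{polytope}(T'_2)$ in the direction $\overrightarrow{v'_1r'_1}$ (so $u^*w^*$ is drawn parallel to $r'_1v'_1$, with $w^*$ the graph-closer endpoint to $r'_2$ but $u^*$ the one lying farther in the $\overrightarrow{v'_1r'_1}$ direction), and compare the positions of $u^*$ and $w^*$ with those of $r'_1$, $v'_1$, the drawn segment $r'_1v'_1$, and the drawn $r'_1$--$r'_2$ path, all of which are constrained by Step 1 and by $\Gamma$ being plane. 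The goal is to show that such a configuration forces $\mathrm{polytope}(T'_1)$ to contain an infinite ray (pointing out of $W_1$, away from $W_2$), contradicting boundedness of $\mathrm{polytope}(T'_1)$. Informally: the two subtrees cannot each ``surround their own root'' (bounded polytope) while they sit in opposite wedges $W_1,W_2$ and the edges $r'_1v'_1$, $r'_2v'_2$ and the path joining $r'_1$ to $r'_2$ thread between those wedges without crossings.

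Everything in Step 1 is bookkeeping with the half-plane characterization and the definition of independence, plus the easy separate check of the degenerate case $v'_1=r'_2$, $v'_2=r'_1$. The main obstacle is Step 2: turning the informal picture ``two wrap-around regions in opposite wedges joined by a non-crossing path'' into a precise statement, i.e.\ exhibiting an explicit pair of half-planes, one among the defining half-planes of $\mathrm{polytope}(T'_1)$ and one among those of $\mathrm{polytope}(T'_2)$, whose constraints cannot be satisfied simultaneously.
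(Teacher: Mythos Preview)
The paper does not prove this lemma at all: it is quoted as \cite[Lemma~5]{NP17} and used as a black box in the proof of Lemma~\ref{lem:all_open_angles}. There is thus no proof in the present paper to compare your proposal against.

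Judged on its own, your Step~1 is correct and is precisely the bookkeeping one needs: for every edge $uw\neq r'_1v'_1$ of $T'_1$ with $w$ closer to $r'_1$, the half-plane $h^w_{uw}$ contains $r'_1$, $v'_1$, the drawn $r'_1$--$r'_2$ path, and all of $T'_2$, so $\mathrm{polytope}(T'_1)\supseteq T'_2$ and symmetrically $\mathrm{polytope}(T'_2)\supseteq T'_1$. Step~2, however, is not a proof but a plan, as you yourself say. Two concrete issues: first, the edge $u^*w^*$ you single out lies in $T'_2$, so its axis is one of the defining hyperplanes of $\mathrm{polytope}(T'_2)$, not of $\mathrm{polytope}(T'_1)$; it is not clear how examining it yields an infinite ray inside $\mathrm{polytope}(T'_1)$. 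Second, your appeal to planarity is essential but unexecuted --- without the no-crossing hypothesis one can easily place two finite point clouds in opposite half-planes so that each associated ``polytope'' is bounded and contains the other cloud, so nothing in Step~1 alone forces a contradiction. The missing piece is a genuine geometric argument (using that $\Gamma$ is plane) that the edges of $T'_1$, all drawn in the wedge $W_1$, cannot have directions $w\!\to\!u$ that positively span the plane while the same holds for $T'_2$ in $W_2$; for that you will need to consult~\cite{NP17}.
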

For an edge $uv$ of a tree $T$, we let $\widetilde{T}_{uv}^v \coloneqq T_{uv}^v + uv$ and regard $\widetilde{T}_{uv}^v$ as a rooted tree with root $u$.
\begin{lem}
Let $T=(V(T),E(T))$ be a greedy-drawable tree. Then, there exists a vertex $u \in V(T)$ such that $|\angle{\widetilde{T}_{uv}^v}|_* > 0 $  for every neighboring vertex $v$ of $u$. 
\label{lem:all_open_angles}
\end{lem}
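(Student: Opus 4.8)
The plan is to reduce the global statement to a condition that holds on each single edge of $T$, and then to extract the required vertex by a standard acyclicity argument on trees. The bridge to the per-edge condition is Lemma~\ref{lem:open_closed}, and the one technical ingredient I would isolate first is a \emph{restriction principle}: since the perpendicular-bisector characterization of greedy tree drawings (the first proposition of Section~\ref{sec:preliminaries}) is stated edge by edge, the induced drawing of any subtree $S$ of $T$ under a greedy drawing of $T$ is again a greedy drawing of $S$, with the same $\mathrm{polytope}(S)$ and hence the same opening angle. Consequently, if $|\angle{S}|_* < 0$ (i.e.\ $S$ has no greedy drawing with an open angle), then $S$ is drawn with a closed angle in \emph{every} greedy drawing of $T$.

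Next I would carry out the per-edge step. Fix an edge $uv$ of $T$ and look at the two ``halves'' $\widetilde{T}_{uv}^v$ (rooted at $u$) and $\widetilde{T}_{uv}^u := T_{uv}^u + uv$ (rooted at $v$). Unwinding the definition of independence shows at once that these two rooted subtrees are independent: each has a degree-$1$ root, the root of one is the neighbor of the root of the other, and $\widetilde{T}_{uv}^u \setminus \{v\} = T_{uv}^u$ while $\widetilde{T}_{uv}^v \setminus \{u\} = T_{uv}^v$, which is exactly what the definition asks for. Now take any greedy drawing $\Gamma$ of $T$ (one exists since $T$ is greedy-drawable). By Lemma~\ref{lem:open_closed}, at least one of the two halves is drawn with an open angle in $\Gamma$; by the restriction principle that half then has $|\angle{\cdot}|_* > 0$. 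Thus, if we call the ordered pair $(u,v)$ \emph{good} when $|\angle{\widetilde{T}_{uv}^v}|_* > 0$, we have shown: for every edge $uv$ of $T$, at least one of $(u,v)$, $(v,u)$ is good.

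Finally, the global step. Define a digraph $D$ on vertex set $V(T)$ by adding, for each edge $uv$ with $(u,v)$ \emph{not} good, the arc $u \to v$; by the previous paragraph this puts at most one arc on each edge of $T$, so the underlying undirected graph of $D$ is a subforest of $T$ and $D$ has no directed cycle. Since $D$ is finite and $V(D) = V(T) \neq \emptyset$, it has a vertex $u$ of out-degree $0$ (and if $D$ has no arcs at all, any vertex of $T$ will do). For this $u$: for every neighbor $v$, the absence of the arc $u \to v$ means $(u,v)$ is good, i.e.\ $|\angle{\widetilde{T}_{uv}^v}|_* > 0$, which is the assertion. The routine parts are the restriction principle and the independence check; the one point that needs care—and the only genuinely subtle one—is the passage from Lemma~\ref{lem:open_closed}, which for a \emph{given} drawing of $T$ only guarantees that one of the two halves is open \emph{in that drawing}, to the drawing-independent conclusion $|\angle{\cdot}|_* > 0$ for one of the halves; this is precisely what the restriction principle supplies.
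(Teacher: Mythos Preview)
Your proof is correct and follows essentially the same approach as the paper: both construct the digraph on $V(T)$ with an arc $u\to v$ whenever $|\angle{\widetilde{T}_{uv}^v}|_*\le 0$, and use Lemma~\ref{lem:open_closed} together with the acyclicity of $T$ to locate a vertex of out-degree~$0$. The paper argues by contradiction (every vertex has an out-arc $\Rightarrow$ directed cycle $\Rightarrow$ $2$-cycle $\Rightarrow$ contradiction with Lemma~\ref{lem:open_closed}) and leaves your restriction principle implicit, whereas you establish the per-edge fact first and then extract the sink directly.
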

\begin{proof}
For contradiction, we assume that, for any vertex $u$ of $T$, there exists a neighboring vertex $v$ of $u$ with $|\angle{\widetilde{T}_{uv}^v}|_* \leq 0$.
Consider a directed graph $D(T)$ with the vertex set $V(T)$ such that $(u,v) \in V(T) \times V(T)$ is an arc if and only if $uv \in E(T)$ and $|\angle{\widetilde{T}_{uv}^v}|_* \leq 0$.
By the assumption, there exists an edge outgoing from $w$ for any vertex $w$ in $D(T)$.
Thus, a directed cycle must exist in $D(T)$.
Since $T$ is a tree, the directed cycle must contain a directed cycle of length $2$.
If we denote such a cycle  by $a \rightarrow b \rightarrow a$,
then we have $|\angle{\widetilde{T}_{ab}^a}|_* \leq 0$ and $|\angle{\widetilde{T}_{ab}^b}|_*  \leq 0$.
By Lemma~\ref{lem:open_closed}, this is a contradiction.
\end{proof}

\section{Greedy-drawable trees with maximum degree $\leq 4$}
\label{sec:deg4}
N\"{o}llenburg and Prutkin~\cite{NP17} provided a simple characterization of greedy-drawable trees with maximum degree $\leq 4$ in terms of opening angles and a linear-time recognition algorithm based on the characterization.
In this section, we present an explicit description of greedy-drawable trees with maximum degree $\leq 4$, based on the results of N\"{o}llenburg and Prutkin~\cite{NP17} and those in Section~\ref{sec:open_angle}.

Let $T$ be a greedy-drawable tree with maximum degree $\leq 4$.
By Lemma~\ref{lem:all_open_angles}, there is a vertex $r$ of $T$ with degree $d \leq 4$ such that the subtrees  $T_0 \coloneqq T_{rv_0}^{v_0}+rv_0,\dots,T_{d-1} \coloneqq T_{rv_{d-1}}^{v_{d-1}}+rv_{d-1}$ can be drawn with open angles, 
where $v_0,\dots,v_{d-1}$ are the neighbors of $r$.
Applying Lemma~\ref{lem:shrinking} to a greedy drawing of $T$, we construct a drawing of $T$ in which each $T_i$ is infinitesimally small and each $\angle{T_i}$ contains the original angle.
In this drawing, each $\angle{T_i}$ must contain the $d$-gon formed by the vertices $v_0,\dots,v_{d-1}$, and thus each apex of $\angle{T_j}$ ($i \neq j$).
Therefore, the following inequality holds:
\begin{equation}
\label{angle_ineq}
 \sum_{i=0}^{d-1}{|\angle{T_i}|_*} > (d-2)180^\circ. 
\end{equation}
N\"{o}llenburg and Prutkin~\cite[Lemmas~4.5--4.7]{NP17} proved a beautiful fact that $T$ is greedy-drawable if and only if the above inequality holds.
Using Proposition~\ref{prop:open_angle}, we can classify the possible combinations of angle types of $T_i$'s.
Combining the results for $d=2,3,4$, we obtain the following proposition.
\begin{prop}
\label{prop:deg4}
A tree $T$ with maximum degree $d \leq 4$ is greedy-drawable if and only if $T$ is a subgraph of a subdivision of a tree obtained by joining four trees $T_0,\dots,T_3$ that have angle types listed in Table~\ref{table:degree4_angle} at a single vertex; that is,
$T$ is a subgraph of a subdivision of a tree obtained by combining four trees $T_0,\dots,T_3$ as described in Table~\ref{table:degree4_main}.
\end{prop}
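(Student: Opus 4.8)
The plan is to follow the two-step strategy spelled out after the system~(\ref{ineq_main}): reduce greedy-drawability of a maximum-degree-$\leq 4$ tree to the single numerical inequality~(\ref{angle_ineq}) on suprema of opening angles, and then settle that inequality by a finite case analysis driven by the classification of open-angle rooted trees in Section~\ref{sec:open_angle}. First I would pin down the root. Given a greedy-drawable tree $T$ of maximum degree $\leq 4$, Lemma~\ref{lem:all_open_angles} supplies a vertex $r$, of some degree $d'\leq 4$, such that every subtree $T_i:=\widetilde{T}_{rw_i}^{w_i}$ hanging at $r$ can be drawn with an open angle; applying Lemma~\ref{lem:shrinking}, the argument preceding~(\ref{angle_ineq}), and \cite[Lemmas~17--19]{NP17} yields the equivalence that a tree $T$ of maximum degree $\leq 4$ is greedy-drawable if and only if there is such a vertex $r$ for which in addition $\sum_{i=0}^{d'-1}|\angle{T_i}|_*>(d'-2)\cdot 180^\circ$. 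Since for $d'<4$ one may pad the decomposition with type-$A$ subtrees (a single edge at $r$, with $|\angle{}|_*=180^\circ$) without destroying the bound and without raising the maximum degree above $4$, it suffices to analyse $d'=4$, the lower-degree configurations appearing as subtrees of degree-$4$ ones. Thus the problem becomes: enumerate the quadruples of rooted open-angle trees whose suprema of opening angles sum past $360^\circ$.

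Next I would carry out the enumeration. By Proposition~\ref{prop:open_angle} every rooted open-angle tree is, after suppressing degree-$2$ vertices, of one of the types $A$--$E$ with a value of $|\angle{}|_*$ given by an explicit formula, and Table~\ref{table:open_angle} records the resulting angle-type ranges. For $d'=4$ each summand is $\leq 180^\circ$ and the four must exceed $360^\circ$; I would run through the finitely many quadruples of angle-type ranges meeting this bound, for each range read off from Proposition~\ref{prop:open_angle} the extremal tree realizing it (among the type-$B_n$ the extremal choice is the smallest $n$ compatible with the range, and similarly for $C,D,E$; subdivision freedom is precisely what lets a ``weight $m$'' path be replaced by a bounded combinatorial skeleton except in the ranges that force an unbounded parameter), and assemble the result into Table~\ref{table:degree4_angle} and Table~\ref{table:degree4_main}, the $d'=3$ rows being obtained by replacing one of the four pieces by type $A$. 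The numerical part is harmless, since \cite[Lemmas~17--19]{NP17} have already absorbed the delicate feasibility analysis of the system~(\ref{ineq_main}) into the clean inequality~(\ref{angle_ineq}), so here the case analysis is purely combinatorial: list the tuples of suprema summing past $(d'-2)\cdot 180^\circ$ and translate back via Proposition~\ref{prop:open_angle}. This matching is where the real work lies, and I expect it to be the main obstacle — it is a long bookkeeping task, and keeping the $(\cdot)^-$ annotations consistent, so that Table~\ref{table:degree4_main} records genuine maximal trees (including the ranges where a parameter remains unbounded), is where care is needed.

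Finally I would upgrade ``$T$ is such a gluing'' to the stated subgraph/subdivision form and prove the converse. For the forward direction each $T_i$ in the decomposition above is, up to subdivision, one of the trees of Proposition~\ref{prop:open_angle} with parameters bounded by the case analysis, hence a subtree of a subdivision of the corresponding maximal tree in Table~\ref{table:degree4_main}; gluing at $r$ exhibits $T$ as a subtree of a subdivision of one of the listed maximal trees. For the converse I would use three facts: (i) each maximal tree in Table~\ref{table:degree4_main} is greedy-drawable, which is how the table was built (its angle-type quadruple satisfies~(\ref{angle_ineq})); (ii) subdividing a tree leaves all suprema of opening angles unchanged (Section~\ref{sec:open_angle}), hence preserves greedy-drawability of the whole tree via the equivalence established above; and (iii) any connected subgraph $C$ of a greedy-drawable tree $S$ is greedy-drawable, which is immediate from the perpendicular-bisector characterization of greedy tree drawings (Lemma~1 of~\cite{NP17}; see also \cite[Lemma~3]{ABF12}), since for every edge $uv$ of $C$ the component of $C-uv$ containing $u$ is contained in the component of $S-uv$ containing $u$, hence in the half-plane $h^u_{uv}$ in the restricted drawing. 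Because $T$ is connected, being a subgraph of a subdivision $S'$ of a maximal tree makes $T$ a connected subgraph of the tree $S'$, so (i)--(iii) give that $T$ is greedy-drawable, closing the equivalence.
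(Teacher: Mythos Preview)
Your proposal is correct and follows essentially the same route as the paper: invoke Lemma~\ref{lem:all_open_angles} to locate a root $r$ with all subtrees open-angled, reduce greedy-drawability to the inequality~(\ref{angle_ineq}) via the shrinking lemma and \cite[Lemmas~17--19]{NP17}, and then carry out a finite enumeration of angle-type quadruples using Proposition~\ref{prop:open_angle} and Table~\ref{table:open_angle}. The paper handles the $d'<4$ cases by simply ``combining the results for $d=2,3,4$'' rather than by your padding-with-type-$A$ device, and it leaves the subgraph/subdivision converse implicit, whereas you spell it out explicitly via the half-plane criterion; but these are differences of presentation, not of approach.
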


\begin{table}[h]
\centering
\scalebox{0.8}{
  \begin{tabular}{|c|c|c|c|}
  \hline
    $T_0$ & $T_1$ & $T_2$ & $T_3$ \\
    \hline
    $180^\circ$ & $180^\circ$& $(0^\circ,180^\circ]$ & $(0^\circ,180^\circ]$ \\
    \hline
    $180^\circ$ & $120^\circ$ & $60^\circ$ &  $(0^\circ,60^\circ]$ \\
     \hline
    $180^\circ$ & $120^\circ$ & $52.5^\circ$ &  $(7.5^\circ,52.5^\circ]$ \\
     \hline
    $180^\circ$ & $120^\circ$ & $48.75^\circ$ &  $(11.25^\circ,48.75^\circ]$ \\
     \hline
      $180^\circ$ & $120^\circ$ & $(45^\circ,46.875^\circ]$ & $[15^\circ, 46.875^\circ]$ \\
    \hline
      $180^\circ$ & $120^\circ$ & $45^\circ$ & $(15^\circ, 45^\circ]$ \\
    \hline
     $180^\circ$ & $120^\circ$ & $37.5^\circ$ & $(22.5^\circ,37.5^\circ]$\\
    \hline
     $180^\circ$ & $120^\circ$ & $(30^\circ,33.75^\circ]$ & $[30^\circ ,33.75^\circ]$\\
    \hline
     $180^\circ$ & $(90^\circ, 105^\circ]$ & $(90^\circ, 105^\circ]$ & $(0^\circ,105^\circ]$ \\
    \hline
     $180^\circ$ & $105^\circ$ & $60^\circ$ & $(15^\circ,60^\circ]$\\
    \hline
     $180^\circ$ & $105^\circ$ & $52.5^\circ$ & $(22.5^\circ,52.5^\circ]$\\
    \hline
     $180^\circ$ & $105^\circ$ & $(45^\circ,48.75^\circ]$ &  $[30^\circ, 48.75^\circ]$ \\
    \hline
     $180^\circ$ & $105^\circ$ & $45^\circ$ &  $(30^\circ, 45^\circ]$ \\
    \hline
     $180^\circ$ & $97.5^\circ$ &  $60^\circ$ &  $(22.5^\circ, 60^\circ]$ \\
    \hline
     $180^\circ$ & $97.5^\circ$ &  $52.5^\circ$ &  $(30^\circ, 52.5^\circ]$ \\
    \hline
     $180^\circ$ & $97.5^\circ$ &  $(45^\circ, 48.75^\circ]$ &  $[37.5^\circ, 48.75^\circ]$ \\
      \hline
\end{tabular}

  \begin{tabular}{|c|c|c|c|}
  \hline
    $T_0$ & $T_1$ & $T_2$ & $T_3$ \\
    \hline
      $180^\circ$ & $(90^\circ, 97.5^\circ]$ &  $45^\circ$ &  $45^\circ$ \\
    \hline
       $180^\circ$ & $(90^\circ, 93.75^\circ]$ &  $60^\circ$ & $[30^\circ, 60^\circ]$  \\   
      \hline
       $180^\circ$ & $(90^\circ, 93.75^\circ]$ &  $52.5^\circ$ & $[37.5^\circ, 52.5^\circ]$  \\   
      \hline
       $180^\circ$ & $(90^\circ, 93.75^\circ]$ &  $[45^\circ, 48.75^\circ]$ & $[45^\circ, 48.75^\circ]$  \\   
      \hline
      $120^\circ$ & $120^\circ$ & $120^\circ$ & $(0^\circ, 120^\circ]$ \\
    \hline
       $120^\circ$ & $120^\circ$ & $105^\circ$ & $(15^\circ, 105^\circ]$ \\
       \hline
        $120^\circ$ & $120^\circ$ & $97.5^\circ$ & $(22.5^\circ, 97.5^\circ]$ \\
        \hline
         $120^\circ$ & $120^\circ$ & $(90^\circ, 93.75^\circ]$  & $[30^\circ, 93.75^\circ]$ \\
         \hline
         $120^\circ$ & $105^\circ$ & $105^\circ$  & $(30^\circ, 105^\circ]$ \\
          \hline
         $120^\circ$ & $105^\circ$ & $(90^\circ,97.5^\circ]$  & $[45^\circ, 97.5^\circ]$ \\
        \hline
           $120^\circ$ & $97.5^\circ$ &  $97.5^\circ$ & $(45^\circ, 97.5^\circ]$ \\
         \hline
           $120^\circ$ & $97.5^\circ$ &  $(90^\circ,93.75^\circ]$ & $[52.5^\circ, 93.75^\circ]$ \\
         \hline
           $120^\circ$ & $(90^\circ, 93.75^\circ]$ &  $(90^\circ,93.75^\circ]$ & $[60^\circ, 93.75^\circ]$ \\
         \hline
           $105^\circ$ & $105^\circ$ & $105^\circ$  & $(45^\circ, 105^\circ]$ \\
           \hline
           $105^\circ$ & $105^\circ$ & $(90^\circ,97.5^\circ]$  & $[60^\circ, 97.5^\circ]$ \\
           \hline
           $(90^\circ, 105^\circ]$ &  $(90^\circ, 105^\circ]$ & $(90^\circ, 105^\circ]$ & $(90^\circ, 105^\circ]$\\
           \hline          
  \end{tabular}
}
  \caption{Angle types of  subtrees  $T_0,\dots,T_3$ of greedy-drawable trees with maximum degree 4}
  \label{table:degree4_angle}
\end{table}

\begin{table}[h]
\centering
\scalebox{0.8}{
  \begin{tabular}{|c|c|c|c|}
  \hline
    $T_0$ & $T_1$ & $T_2$ & $T_3$ \\
    \hline
   $A$ & $A$ & $E_{k,l,n}$ & $E_{k,l,n}$ \\
\hline
$A$ & $B_1$ & $C_{1,1},D_{1,1,0}$ & $E_{k,l,n}$ \\
\hline
$A$ & $B_1$ & $C_{2,1}$ & $C_{k,3}, D_{3,l,0}, D_{2,l,1}, D_{1,l,2}, E_{3,3,0}, E_{2,2,1}, E_{1,1,2},  E_{2,l,0}, E_{1,l,1}$ \\
\hline
$A$ & $B_1$ & $C_{3,1}$ & $C_{k,3},  D_{3,3,0}, D_{2,2,1}, D_{1,1,2}, E_{2,l,0}, E_{1,l,1}$ \\
\hline
$A$ & $B_1$ & $C_{k,1}$ & $C_{1,3}, D_{3,3,0}, D_{2,2,1}, D_{1,1,2}, D_{1,l,1}, D_{2,l,0}, E_{2,3,0}, E_{1,2,1},  E_{1,l,0}$ \\
\hline
$A$ & $B_1$ & $D_{1,2,0}$ & $C_{k,2}, D_{2,l,0}, D_{1,l,1}, E_{2,2,0},E_{1,1,1}, E_{1,l,0}$\\
\hline
$A$ & $B_1$ & $D_{1,3,0}, E_{1,1,0}$ & $C_{k,2}, ,D_{2,2,0}, D_{1,1,1}, E_{1,l,0}$\\
\hline
$A$ & $B_1$ & $D_{1,l,0}$ & $C_{1,2}, D_{2,2,0}, D_{1,1,1}, D_{1,l,0}, E_{1,2,0}$ \\
\hline
$A$ & $B_n$ & $B_n$ & $E_{k,l,n}$ \\
\hline
$A$ & $B_2$ & $C_{1,1},D_{1,1,0}$ & $C_{k,2}, D_{2,l,0}, D_{1,l,1}, E_{2,2,0}, E_{1,1,1}, E_{1,l,0}$\\
\hline
$A$ & $B_2$ & $C_{2,1}$ & $C_{k,2}, D_{2,2,0}, D_{1,1,1}, E_{1,l,0}$ \\
\hline
$A$ & $B_2$ & $C_{k,1}$ & $C_{1,2}, D_{2,2,0}, D_{1,1,1},  D_{1,l,0}, E_{1,2,0}$ \\
\hline
$A$ & $B_2$ & $D_{1,2,0}$ & $D_{1,l,0},E_{1,1,0}$\\
\hline
$A$ & $B_3$ & $C_{1,1},D_{1,1,0}$ & $C_{k,2}, D_{2,2,0}, D_{1,1,1}, E_{1,l,0}$\\
\hline
$A$ & $B_3$ & $C_{2,1}$ & $C_{k,1}, D_{1,l,0}, E_{1,1,0}$\\
\hline
$A$ & $B_3$ & $C_{k,1}$ & $C_{k,1}, D_{1,3,0}, E_{1,1,0}$\\
\hline
$A$ & $B_n$ & $D_{1,2,0}$ & $D_{1,2,0}$\\
\hline
$A$ & $B_n$ & $C_{1,1},D_{1,1,0}$ & $C_{1,2}, D_{2,2,0}, D_{1,1,1},  D_{1,l,0}, E_{1,2,0}$\\
\hline
$A$ & $B_n$ & $C_{2,1}$ & $C_{k,1}, D_{1,3,0}, E_{1,1,0}$\\
\hline
$A$ & $B_n$ & $C_{k,1}, D_{1,2,0}$ & $C_{k,1}, D_{1,2,0}$\\
\hline
$B_1$ & $B_1$ & $B_1$ & $E_{k,l,n}$ \\
\hline
$B_1$ & $B_1$ & $B_2$ & $C_{k,2}, D_{2,l,0}, D_{1,l,1}, E_{2,2,0}, E_{1,1,1} E_{1,l,0}$\\
\hline
$B_1$ & $B_1$ & $B_3$ & $C_{k,2}, D_{2,2,0},  D_{1,1,1}, E_{1,l,0}$ \\
\hline
$B_1$ & $B_1$ & $B_n$ & $C_{1,2}, D_{2,2,0}, D_{1,1,1}, D_{1,l,0}, E_{1,2,0}$\\
\hline
$B_1$ & $B_2$ & $B_2$ & $C_{k,1}, D_{1,l,0},E_{1,1,0}$\\
\hline
$B_1$ & $B_2$ & $B_n$ & $C_{k,1}, D_{1,2,0}$\\
\hline
$B_1$ & $B_3$ & $B_3$ & $C_{k,1}, D_{1,1,0}$\\
\hline
$B_1$ & $B_3$ & $B_n$ & $C_{2,1}, D_{1,1,0}$\\
\hline
$B_1$ & $B_n$ & $B_n$ & $C_{1,1},D_{1,1,0}$\\
\hline
$B_2$ & $B_2$ & $B_2$ & $C_{k,1},D_{1,1,0}$\\
\hline
$B_2$ & $B_2$ & $B_n$ & $C_{1,1},D_{1,1,0}$\\
\hline
$B_n$ & $B_n$ & $B_n$ & $B_n$\\
\hline
\end{tabular}
}
  \caption{Subtrees $T_0,\dots,T_3$ of maximal greedy-drawable trees with maximum degree 4}
  \label{table:degree4_main}
\end{table}

\newpage

\begin{proof}
We give a proof for the case that $|\angle{T_0}|_*=180^\circ$ and $|\angle{T_1}|_* \geq 120^\circ$.
Repeating the same discussion, we can prove the proposition.
We first assume that $|\angle{T_1}|_* = 180^\circ$, i.e., $T$ is the type-$A$ tree. 
Then, the inequality~(\ref{angle_ineq}) holds if and only if $|\angle{T_2}|_*>0^\circ$ and $|\angle{T_3}|_* > 0^\circ$, i.e., each of the trees $T_2$ and $T_3$ is a subgraph of a type-$E_{k,l,n}$ tree for some $k,l,n$.
This result corresponds to the result in the first row of Table~\ref{table:degree4_main}.
Next, let us assume that $|\angle{T_1}|_* < 180^\circ$. Then, we have $|\angle{T_1}|_* = 120^\circ$ by the result in Table~\ref{table:open_angle}, which implies that $T_1$ is the type-$B_1$ tree.
We consider several cases separately.

\begin{description}[itemsep=0cm,leftmargin=0cm,topsep=1ex]
\item[Case 1:] $60^\circ \leq |\angle{T_2}|_* \leq 120^\circ$.

In this case, $T_2$ is a subgraph of a type-$C_{1,1}$ or $D_{1,1,0}$ tree. (Here, we note that the type-$B_n$ tree is a subgraph of some type-$C_{1,1}$ tree.)
The inequality~(\ref{angle_ineq}) holds if and only if $|\angle{T_3}|_* > 0^\circ$, i.e., $T_3$ is a subgraph of a type-$E_{k,l,n}$ tree for some $k,l,n$.
Thus, we obtain the result in the second row of Table~\ref{table:degree4_main}.
\item[Case 2:]  $52.5^\circ \leq |\angle{T_2}|_* < 60^\circ$.

In this case, we have $|\angle{T_2}|_* = 52.5^\circ$ by the result in Table~\ref{table:open_angle}, and thus $T_2$ is a type-$C_{2,1}$ tree. 
The inequality~(\ref{angle_ineq}) holds  if and only if $|\angle{T_3}|_* > 7.5^\circ$, i.e., $T_3$ is a subgraph of  the following type trees: $C_{k,3}$, $D_{3,l,0}$, $D_{2,l,1}$, $D_{1,l,2}$, $E_{3,3,0}$, $E_{2,2,1}$, $E_{1,1,2}$,  $E_{2,l,0}$, $E_{1,l,1}$.
This corresponds to the result in the third row of Table~\ref{table:degree4_main}.
(Note that a type-$D_{k,l,n}$ tree is a subgraph of some type-$E_{k,l,n}$ tree, and a type-$C_{k,n}$ tree is a subgraph of some type-$D_{1,1,n}$ tree.)

\item[Case 3:] $48.75^\circ \leq |\angle{T_2}|_* < 52.5^\circ$.

In this case, we have $|\angle{T_2}|_* = 48.75^\circ$ by the result in Table~\ref{table:open_angle}, and thus $T_2$ is a type-$C_{3,1}$ tree. 
The inequality~(\ref{angle_ineq}) holds if and only if  $|\angle{T_3}|_* > 11.25^\circ$, i.e., $T_3$ is a subgraph of one of the following type trees: $C_{k,3}$, $D_{3,3,0}$, $D_{2,2,1}$, $D_{1,1,2}$, $E_{2,l,0}$, $E_{1,l,1}$.
This corresponds to the result in the fourth row of Table~\ref{table:degree4_main}.

\item[Case 4:] $45^\circ \leq |\angle{T_2}|_* < 48.75^\circ$.

In this case, we have $45^\circ \leq |\angle{T_2}|_* \leq 46.875^\circ$ by the result in Table~\ref{table:open_angle}, and thus $T_2$ is a type-$C_{k,1}$ tree for some $k \geq 4$.
Since $|\angle{T_3}|_* > 13.125^\circ$ is satisfied only for trees $T_3$ with $|\angle{T_3}|_* \geq 15^\circ$ (see Table~\ref{table:open_angle}),
the inequality~(\ref{angle_ineq}) holds if and only if $|\angle{T_3}|_* \geq 15^\circ$, i.e., $T_3$ is a subgraph of one of the following type trees: $C_{1,3}$, $D_{3,3,0}$, $D_{2,2,1}$, $D_{1,1,2}$, $D_{1,l,1}$, $D_{2,l,0}$, $E_{2,3,0}$, $E_{1,2,1}$, $E_{1,l,0}$.
This corresponds to the result in the fifth row of Table~\ref{table:degree4_main}. 

\item[Case 5:] $37.5^\circ \leq |\angle{T_2}|_* < 45^\circ$.

In this case, we have $|\angle{T_2}|_* = 37.5^\circ$ (see Table~\ref{table:open_angle}), and thus $T_2$ is a type-$D_{1,3,0}$ or $E_{1,1,0}$ tree. 
The inequality~(\ref{angle_ineq}) holds if and only if $|\angle{T_3}|_* > 22.5^\circ$, i.e., $T_3$ is a subgraph of one of the following type trees: 
$C_{k,2}$, $D_{2,2,0}$, $D_{1,l,1}$, $E_{1,l,0}$.
This corresponds to the result in the sixth row of Table~\ref{table:degree4_main}. 

\item[Case 6:] $30^\circ \leq |\angle{T_2}|_* < 37.5^\circ$.

In this case, we have $30^\circ \leq |\angle{T_2}|_* \leq 33.75^\circ$ (see Table~\ref{table:open_angle}), and thus $T_2$ is a type-$D_{1,l,0}$ tree for some $l \geq 4$.  
Since $|\angle{T_3}|_* > 26.25^\circ$ is satisfied only for trees $T_3$ with $|\angle{T_3}|_* \geq 30^\circ$,
the inequality~(\ref{angle_ineq}) holds if and only if $|\angle{T_3}|_* \geq 30^\circ$, i.e., $T_3$ is a subgraph of one of the following type trees: 
$C_{1,2}$, $D_{2,2,0}$, $D_{1,1,1}$, $D_{1,l,0}$, $E_{1,2,0}$.
This corresponds to the result in the seventh row of Table~\ref{table:degree4_main}. 

\item[Case 7:] $|\angle{T_2}|_* < 30^\circ$. 

Since $|\angle{T_3}|_* \leq |\angle{T_2}|_* < 30^\circ$, the inequality~(\ref{angle_ineq}) does not hold for any choice of trees $T_2$ and $T_3$.
\end{description}
By repeating the same discussion, we can prove the proposition.
\end{proof}

\section{Greedy-drawable trees with maximum degree $5$}
\label{sec:deg5}
This section presents an explicit description of greedy-drawable trees with maximum degree $5$.
Unlike the case of maximum degree $\leq 4$, greedy drawability cannot be characterized by the inequality~(\ref{angle_ineq})
and a complete combinatorial characterization is left open in~\cite{NP17}.

Let $T$ be a tree with maximum degree $5$.
Take a degree-$5$ vertex $r$, and let $v_0, \dots, v_4$ be the neighbors of $r$, and consider the rooted trees $T_i \coloneqq T^{v_i}_{rv_i} +rv_i \ (i = 0, \dots, 4)$.
If $T$ can be drawn greedily, each $T_i$ must satisfy $|\angle{T_i}|_* > 0$.
Indeed, if $|T_j|_* \leq 0$ for some $j$, then the digraph $D(T)$, defined in the proof of Lemma~\ref{lem:all_open_angles}, contains a directed cycle $r \rightarrow v_j \rightarrow r$, which is a contradiction.
Thus, $T$ is greedy-drawable if and only if $|\angle{T_i}|_* >0$ for each $i =0,\dots,4$, and
the inequality system~(\ref{ineq_main}) is feasible for some permutation $\tau$ on $\{ 0,\dots, 4\}$, where $\varphi_0=|\angle{T_0}|_*,\dots,\varphi_4=|\angle{T_4}|_*$.
We prove the following theorem:
\begin{thm}
\label{thm:main}
Let $T$ be a tree with a degree-$5$ vertex $r$.
Let $v_0, \dots, v_4$ be the neighboring vertices of $r$, and $T_i \coloneqq T^{v_i}_{rv_i} + rv_i$ for $i=0,\dots,4$.
The tree $T$ is greedy-drawable if and only if the subtrees $T_0,\dots,T_4$ are subgraphs of subdivisions of trees listed in Table~\ref{table:degree5_main}.
\end{thm}
\begin{table}[htb]
\centering
      \scalebox{0.8}{
  \begin{tabular}{|c|c|c|c|c|}
  \hline
    $T_0$ & $T_1$ & $T_2$ & $T_3$ & $T_4$ \\
    \hline
$A$ & $A$ & $B_1$ & $B_1$ & $E_{k,l,n}$ \\
\hline
$A$ & $A$ & $B_1$ & $B_2$ & $C_{k,2}, D_{2,l,0}, D_{1,l,1}, E_{2,2,0}, E_{1,1,1}, E_{1,l,0}$\\
\hline
$A$ & $A$ & $B_1$ & $B_3$ & $C_{k,2}, D_{2,2,0}, D_{1,1,1}, E_{1,l,0}$\\
\hline
$A$ & $A$ & $B_1$ & $B_n$ & $C_{1,2}, D_{2,2,0}, D_{1,1,1}, D_{1,l,0} E_{1,2,0}$\\
\hline
$A$ & $A$ & $B_2$ & $B_2$ & $C_{k,1}, D_{1,l,0},E_{1,1,0}$\\
\hline
$A$ & $A$ & $B_2$ & $B_n$ & $C_{k,1}, D_{1,2,0}$\\
\hline
$A$ & $A$ & $B_3$ & $B_3$ & $C_{k,1}, D_{1,1,0}$\\
\hline
$A$ & $A$ & $B_3$ & $B_n$ & $C_{2,1}, D_{1,1,0}$\\
\hline
$A$ & $A$ & $B_n$ & $B_n$ & $C_{1,1},D_{1,1,0}$\\
    \hline
$B_1$ & $B_1$ & $B_1$ & $B_n$ & $B_n$ \\
\hline
$B_1$ & $B_1$ & $B_2$ & $B_2$ & $B_n$\\
\hline
  \end{tabular}
}
\scalebox{0.8}{
  \begin{tabular}{|c|c|c|c|c|}
  \hline
    $T_0$ & $T_1$ & $T_2$ & $T_3$ & $T_4$ \\
    \hline
$A$ & $B_1$ & $B_1$ & $B_1$ & $ C_{k,1}, D_{1,4,0}, E_{1,1,0}$\\
\hline
$A$ & $B_1$ & $B_1$ & $B_2$ & $C_{k,1}, D_{1,2,0}$\\
\hline
$A$ & $B_1$ & $B_1$ & $B_3$ & $C_{4,1},D_{1,1,0}$\\
\hline
$A$ & $B_1$ & $B_1$ & $B_4$ & $C_{3,1},D_{1,1,0}$\\
\hline
$A$ & $B_1$ & $B_1$ & $B_n$ & $C_{2,1}, D_{1,1,0}$\\
\hline
$A$ & $B_1$ & $B_2$ & $B_4$ & $C_{1,1},D_{1,1,0}$\\
\hline
$A$ & $B_1$ & $B_n$ & $B_n$ & $B_n$\\
\hline
$A$ & $B_2$ & $B_3$ & $B_n$ & $B_n$\\
\hline
$A$ & $B_2$ & $B_4$ & $B_4$ & $B_n$\\
\hline
$A$ & $B_2$ & $B_4$ & $B_5$ & $B_5$\\
\hline
$A$ & $B_3$ & $B_3$ & $B_3$ & $B_6$\\
\hline
  \end{tabular}
}  \caption{Subtrees $T_0,\dots,T_4$ of maximal greedy-drawable trees with maximum degree $5$}
  \label{table:degree5_main}
\end{table}
We prove Theorem~\ref{thm:main} by distinguishing the following two cases.
\subsection{Case 1:  $\varphi_0 \neq 180^\circ$ or $\varphi_0 = \varphi_1 = 180^\circ$}
In this case, the conditions for greedy drawability are given in \cite{NP17}, which are summarized as follows:
\begin{itemize}
\item If $\varphi_0 \neq 180^\circ$, $T$ is greedy-drawable if and only if $\varphi_1,\dots,\varphi_4 > 0$ and $\sum_{i=0}^4{\varphi_i} > 540^\circ$ (\cite[Lemma~4.6]{NP17}).
\item If $\varphi_0 = \varphi_1 = \varphi_2 = 180^\circ$, $T$ is greedy-drawable if and only if  $\varphi_3,\varphi_4 > 0$ and $\varphi_3 + \varphi_4 > 120^\circ$ (\cite[Lemmas~4.8 and 4.9]{NP17}).
\item If $\varphi_0 = \varphi_1 = 180^\circ$ and $\varphi_2 \neq 180^\circ$, $T$ is greedy-drawable if and only if $\varphi_2,\varphi_3,\varphi_4 > 0$ and $\varphi_2 + \varphi_3 + \varphi_4 > 240^\circ$  (\cite[Lemma~4.10]{NP17}).
\end{itemize}
Using these results and Proposition~\ref{prop:open_angle}, we can explicitly describe the possible angle types of $T_0,\dots,T_4$ as listed in Table~\ref{table:degree5_case1}.
\begin{table}[h]
\centering
      \scalebox{0.8}{
  \begin{tabular}{|c|c|c|c|c|}
  \hline
    $T_0$ & $T_1$ & $T_2$ & $T_3$ & $T_4$ \\
    \hline
    $180^\circ$ & $180^\circ$  & $[120^\circ,180^\circ]$ & $[120^\circ, 180^\circ]$ & $(0^\circ,180^\circ]$ \\
    \hline
 $180^\circ$ & $180^\circ$  & $[120^\circ,180^\circ]$ & $105^\circ$ & $(15^\circ,105^\circ]$\\
\hline
 $180^\circ$ & $180^\circ$  & $[120^\circ,180^\circ]$ & $97.5^\circ$ & $(22.5^\circ, 97.5^\circ]$\\
\hline
 $180^\circ$ & $180^\circ$  & $[120^\circ,180^\circ]$ & $(90^\circ,93.75^\circ]$ & $[30^\circ, 93.75^\circ]$\\
\hline
 $180^\circ$ & $180^\circ$  & $105^\circ$ & $105^\circ$ & $(30^\circ,105^\circ]$\\
\hline
 $180^\circ$ & $180^\circ$  & $105^\circ$ & $(90^\circ,97.5^\circ]$ & $[45^\circ,97.5^\circ]$ \\
\hline
 $180^\circ$ & $180^\circ$  & $97.5^\circ$ & $97.5^\circ$ & $(45^\circ,97.5^\circ]$ \\
\hline
 $180^\circ$ & $180^\circ$  & $97.5^\circ$ & $(90^\circ,93.75^\circ]$ & $[52.5^\circ,93.75^\circ]$ \\
\hline
 $180^\circ$ & $180^\circ$  & $(90^\circ,93.75^\circ]$ & $(90^\circ,93.75^\circ]$ & $[60^\circ,93.75^\circ]$ \\
\hline
    $120^\circ$ & $120^\circ$ & $120^\circ$ & $(90^\circ, 120^\circ]$ & $(90^\circ, 120^\circ]$ \\
    \hline
    $120^\circ$ & $120^\circ$ & $105^\circ$ & $105^\circ$ & $(90^\circ,105^\circ]$ \\
     \hline 
  \end{tabular}
}
  \caption{Possible angle types of $T_0,\dots,T_4$ ($\varphi_0 = \varphi_1 = 180^\circ$ or $\varphi_0 \leq 120^\circ$)}
  \label{table:degree5_case1}
\end{table}

\subsection{Case 2: $\varphi_0 = 180^\circ$ and $\varphi_1,\dots,\varphi_4 \neq 180^\circ$}
Characterizing greedy-drawable trees in this case was left open in \cite{NP17}.
We resolve this case by proving the following proposition.
\begin{prop}
Suppose that $\varphi_0 = 180^\circ$ and $\varphi_1,\dots,\varphi_4 \neq 180^\circ$.
Then, the tree $T$ is greedy-drawable if and only if $T_0,\dots,T_4$ have one of the angle types listed in Table~\ref{table:degree5_case3}.
\label{degree5_main_prop}
\end{prop}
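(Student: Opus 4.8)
The plan is to reduce the problem to a feasibility analysis of the inequality system~(\ref{ineq_main}) with $d=5$, $\varphi_0=180^\circ$, and $\varphi_1,\dots,\varphi_4<180^\circ$, and then to translate the feasibility region into the combinatorial table via Proposition~\ref{prop:open_angle} and Table~\ref{table:open_angle}. First I would fix $T_0$ to be the type-$A$ subtree (so $\varphi_0=180^\circ$) and note that, since $|\angle T_0|_*=180^\circ$, the edge $rv_0$ can be drawn so that the two rays bounding $\angle T_0$ are arbitrarily close to a straight line; geometrically this means $T_0$ occupies essentially a half-plane, and the remaining four subtrees $T_1,\dots,T_4$ must be packed into the complementary half-plane around $r$. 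This is exactly the situation governed by~(\ref{ineq_main}): the constraint $\alpha_0+\dots+\alpha_4=360^\circ$ together with $\beta_i,\gamma_i<\alpha_i$ and the nonlinear product relation $\prod\sin\beta_i=\prod\sin\gamma_i$, plus the key inequalities $\beta_i+\gamma_{i+1}<\varphi_{\tau(i)}$. Setting $\tau(0)=0$ and letting $\alpha_0\to180^\circ$ forces $\beta_0,\gamma_0\to0^\circ$ up to the wheel constraints, which decouples the "slot" containing $T_0$ and leaves the four subtrees $T_1,\dots,T_4$ to be arranged in a fan of total angle approaching $180^\circ$.

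Next I would carry out the core quantitative step: determine, for each cyclic ordering of $\{1,2,3,4\}$ around $r$, the exact set of quadruples $(\varphi_1,\varphi_2,\varphi_3,\varphi_4)$ for which the reduced system is feasible. With $T_0$ degenerate, the effective constraints become $\beta_i+\gamma_{i+1}<\varphi_{\tau(i)}$ for the four consecutive pairs among the non-degenerate subtrees, $\beta_i,\gamma_i<\alpha_i$, $\alpha_i+\beta_i+\gamma_i=180^\circ$, $\sum_{i=1}^4\alpha_i\to180^\circ$, and the nonlinear identity $\prod_{i=1}^4\sin\beta_i=\prod_{i=1}^4\sin\gamma_i$ (the factor from slot $0$ contributing $\sin\beta_0/\sin\gamma_0\to1$ in the limiting regime, which must be handled carefully — see below). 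I expect that, as in N\"ollenburg--Prutkin's degree-$4$ analysis, the nonlinear identity can be exploited by a symmetry/perturbation argument: given any solution of the linear part, one can adjust the $\beta_i$'s and $\gamma_i$'s along the curve $\prod\sin\beta_i=\prod\sin\gamma_i$ without violating strict inequalities, so feasibility of the full system is equivalent to feasibility of a purely linear relaxation together with one boundary condition. Solving that linear program in the four unknowns $\varphi_i$, over all $3!/2=3$ distinct cyclic orders, yields a finite union of polyhedral regions in $(\varphi_1,\varphi_2,\varphi_3,\varphi_4)$-space.

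Then I would intersect each such region with the discrete value set $\Phi$ of attainable suprema of opening angles. By Proposition~\ref{prop:open_angle} and Table~\ref{table:open_angle}, the relevant part of $\Phi$ above $7.5^\circ$ is an explicit discrete list ($180^\circ$, $(120^\circ)^-$, $(105^\circ)^-$, the $B_n$ values approaching $90^\circ$, $(60^\circ)^-$, the $C_{k,1}$ values, etc.), and the "$\varphi^-$" one-sided nature of most of these values interacts with the strict inequalities in~(\ref{ineq_main}) in a controlled way. Matching the polyhedral feasibility regions against this list row by row produces precisely the enumeration in Table~\ref{table:degree5_case3}, and finally Proposition~\ref{prop:open_angle} converts each admissible angle type back into the explicit families of rooted trees (type $A$, $B_n$, $C_{k,n}$, $D_{k,l,n}$, $E_{k,l,n}$) listed there; the "subgraph of a subdivision" clause follows because greedy-drawability is monotone under edge deletion and subdivision does not change opening angles.

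The main obstacle will be the nonlinear constraint $\prod\sin\beta_i=\prod\sin\gamma_i$ in the limiting regime $\alpha_0\to180^\circ$: one must verify that the degenerating slot-$0$ factor $\sin\beta_0/\sin\gamma_0$ really can be driven to $1$ (equivalently $\beta_0=\gamma_0$) simultaneously with $\alpha_0\to180^\circ$ while keeping all other strict inequalities satisfiable, and conversely that no feasible configuration requires this ratio to stay bounded away from $1$. This is the point where the argument genuinely goes beyond the degree-$\le4$ case, since with five subtrees the product has more factors and the coupling between the wheel angles is tighter; I anticipate handling it by an explicit continuity/implicit-function argument showing the solution set of the nonlinear identity is a smooth hypersurface transverse to the linear constraints, so that strict feasibility is an open condition preserved under the limit. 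Once that is in place, the remaining casework is a (lengthy but routine) verification matching polyhedra to the discrete spectrum $\Phi$.
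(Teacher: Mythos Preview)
Your proposal has a genuine gap at its core: you expect that ``the nonlinear identity can be exploited by a symmetry/perturbation argument \dots\ so feasibility of the full system is equivalent to feasibility of a purely linear relaxation together with one boundary condition.'' This is precisely what \emph{fails} when exactly one $\varphi_i$ equals $180^\circ$, and it is why this case was left open in~\cite{NP17}. Concretely, $(180,120,120,120,33.75)$ is feasible (row~I of Table~\ref{table:degree5_case3}) while $(180,120,120,120,31.875)$ is not, yet for the latter the linear relaxation~(\ref{ineq_sub}) has a nonempty polytope $S$ for a suitable $\tau$; what happens is that $\omega=\prod\sin\beta_i-\prod\sin\gamma_i$ is bounded away from zero on all of $S$, so the hypersurface $\{\omega=0\}$ never meets $S$. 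Your ``transversality'' argument cannot detect this --- transversality is irrelevant when the hypersurface misses the polytope entirely. The thresholds in Table~\ref{table:degree5_case3} (e.g.\ $33.75^\circ$ versus $31.875^\circ$) are genuinely nonlinear and are not recovered by any linear inequality in the $\varphi_i$.

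The paper's proof is instead computer-assisted. For the if-part, feasibility of each row of Table~\ref{table:degree5_case3} is certified by exhibiting points $x_\pm$ in the closure $\widetilde S$ with $\omega(x_+)>0>\omega(x_-)$ and invoking the intermediate value theorem. For the only-if part, one lists the finitely many maximal vectors $(\varphi_0,\dots,\varphi_4)$ not covered by the table and, for every permutation $\tau$, either shows $S=\emptyset$ by exact linear programming, or shows $\omega$ has constant sign on $S$ by enclosing $S$ in its axis-aligned bounding box (after slicing $S$ by parallel hyperplanes in four stubborn cases) and bounding $\omega$ there via rigorous interval arithmetic. A secondary issue: your limit $\alpha_0\to180^\circ$ misreads the system. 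Having $\varphi_{\tau(i)}=180^\circ$ merely renders the constraint $\beta_i+\gamma_{i+1}<180^\circ$ non-binding; it does not force any wheel angle to degenerate, and in the actual feasible solutions (Table~\ref{degree5_solution}) no $\alpha_j,\beta_j,\gamma_j$ approaches $0$ or $180^\circ$, so the ``four-subtree fan in a half-plane'' picture is not the correct reduction.
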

\begin{table}[htb]
\centering
  \begin{tabular}{|c|c|c|c|c|c|c|}
  \hline
  &  $T_0$ & $T_1$ & $T_2$ & $T_3$ & $T_4$ \\
    \hline
  I &  $180^\circ$ & $120^\circ$ &  $120^\circ$ &  $120^\circ$ & $[33.75^\circ, 120^\circ]$  \\
    \hline
  II &  $180^\circ$ & $120^\circ$ &  $120^\circ$ &  $105^\circ$ & $[45^\circ, 105^\circ]$ \\
    \hline
 III &  $180^\circ$ & $120^\circ$ &  $120^\circ$ &  $97.5^\circ$ & $[46.875^\circ, 97.5^\circ]$  \\
    \hline
 IV &  $180^\circ$ & $120^\circ$ &  $120^\circ$ &  $93.75^\circ$ & $[48.75^\circ, 93.75^\circ]$  \\
    \hline
 V &  $180^\circ$ & $120^\circ$ &  $120^\circ$ &  $(90^\circ, 91.875^\circ]$ & $[52.5^\circ, 91.875^\circ]$  \\
    \hline
 VI &  $180^\circ$&  $120^\circ$ &  $105^\circ$ &  $[93.75^\circ,105^\circ]$ & $[60^\circ, 105^\circ]$  \\
    \hline
 VII &  $180^\circ$ & $120^\circ$ &  $(90^\circ,105^\circ]$ &  $(90^\circ,105^\circ]$ & $(90^\circ,105^\circ]$  \\
    \hline
 VIII & $180^\circ$ &  $105^\circ$ &  $[97.5^\circ,105^\circ]$ &  $(90^\circ,105^\circ]$ & $(90^\circ,105^\circ]$  \\
    \hline
 IX &   $180^\circ$ &  $105^\circ$ &  $93.75^\circ$ &  $93.75^\circ$ & $(90^\circ,93.75^\circ]$ \\
    \hline
 X &  $180^\circ$ & $105^\circ$ &  $93.75^\circ$ &  $91.875^\circ$ & $91.875^\circ$ \\
    \hline
  XI & $180^\circ$ & $97.5^\circ$ &  $97.5^\circ$ &  $97.5^\circ$ & $[90.9375^\circ, 97.5^\circ]$\\
    \hline
  \end{tabular}
  \caption{Possible angle types of $T_0,\dots,T_4$ ($\varphi_0 = 180^\circ$, $\varphi_1,\dots,\varphi_4 \neq 180^\circ$)}
  \label{table:degree5_case3}
\end{table}

\begin{proof}
We prove Proposition~\ref{degree5_main_prop} with assistance of computer.
To prove the if-part, it suffices to verify feasibility of the system~(\ref{ineq_main}) (for some permutation $\tau$) for each vector $(\varphi_0,\dots,\varphi_4) $ listed as follows:
\[ 
\begin{tabular}{ll}
Case I: (180,120,120,120,33.75), &
Case II: (180,120,120,105, 45), \\
Case III: (180,120,120,97.5, 46.875),  &
Case IV: (180,120,120,93.75,48.75), \\
Case V: (180,120,120,90,52.5), &
Case VI: (180,120,105,93.75, 60), \\
Case VII: (180,120,90,90, 90), &
Case VIII: (180,105,97.5,90, 90), \\
Case IX: (180,105,93.75,93.75, 90),&
Case X: (180,105,93.75,91.875, 91.875), \\
Case XI: (180,97.5, 97.5, 97.5, 90.9375). &
\end{tabular}
 \]
Since the system~(\ref{ineq_main}) contains a non-linear equation, it is often hard to find a concrete solution.
To prove feasibility, we use the intermediate value theorem (as done in~\cite{NP17}).
Let $S_{\tau}(\varphi_0,\dots,\varphi_4)$ be the set of vectors $(\beta_0,\dots,\beta_4,\gamma_0,\dots,\gamma_4)$ that satisfy the following inequalities:
\begin{align}
\begin{split}
&0 < \beta_i, \gamma_i < 180, \\
&2\beta_i + \gamma_i < 180, \ \beta_i + 2\gamma_i < 180, \\
&\beta_i + \gamma_{i+1} < \varphi_{\tau(i)},\\
&\sum_{i=0}^4{(\beta_i + \gamma_i)} = 540. \\
\end{split}
\label{ineq_sub}
\end{align}
This system is obtained by eliminating $\alpha_i$'s from the system~(\ref{ineq_main}) and omitting the non-linear equation.
When $\tau$ and $\varphi_0, \dots, \varphi_4$ are clear from the context, we simply denote the solution set by $S$.
We then let 
$\omega (\beta_0,\dots,\beta_4,\gamma_0,\dots,\gamma_4) \coloneqq \prod_{i=0}^4{\sin (\beta_i)} - \prod_{i=0}^4{\sin (\gamma_i)}$.
To verify feasibility of the original system~(\ref{ineq_main}), it suffices to find a pair of vectors $(x_+,x_-)$ such that $x_+,x_- \in \widetilde{S}$ and $\omega (x_+) > 0$, $\omega (x_-) < 0$, where 
 $\widetilde{S}$ is the topological closure of  $S$.
Vectors $x_+,x_-$ that prove greedy drawability for Cases I--IX  are listed in Table~\ref{degree5_solution}.
We found those vectors by trial and errors;
we computed vertices of the solution set $S$ (by solving linear programs with certain objective functions) and selected ones that take positive and negative values.
To obtain exact solutions to linear programs, we used QSopt\_ex Rational LP Solver~\cite{ACDE}, developed by Applegate, Cook, Dash, and Espinoza.
For those vectors, we determined the values of the function $\omega$ to be positive or negative.
Since the function $\omega$ contains trigonometric functions, it is difficult to compute exact values.
To verify that $\omega$ is negative or positive for the vectors in Table~\ref{degree5_solution}, we employed MPFI library~\cite{RM05}  for arbitrary precision interval arithmetic through the interface of SageMath~\cite{S21}.

\renewcommand{\arraystretch}{1.2}
\begin{table}[htb]
\centering
{\tabcolsep = 0.6mm
\small
 \begin{tabular}{|c|c|cccccccccc|cccccccccc|}
 \hline
 &   $\tau$ & \multicolumn{10}{c|}{$x_+$} & \multicolumn{10}{c|}{$x_-$} \\
  \hline
&  & $\beta_0$ & $\beta_1$ & $\beta_2$ & $\beta_3$ & $\beta_4$ & $\gamma_0$ & $\gamma_1$ & $\gamma_2$ & $\gamma_3$ & $\gamma_4$ & $\beta_0$ & $\beta_1$ & $\beta_2$ & $\beta_3$ & $\beta_4$ & $\gamma_0$ & $\gamma_1$ & $\gamma_2$ & $\gamma_3$ & $\gamma_4$\\
\hline 
 I &  1, 2, 3, 4, 0 & $60$ & $60$ & $60$ & $\frac{105}{4}$ &  $\frac{345}{4}$ &  $60$ & $60$ & $60$ & $60$ & $\frac{15}{2}$ & $\frac{1545}{28}$ & $\frac{705}{14}$ & $\frac{285}{7}$ & $\frac{150}{7}$ & $\frac{4695}{56}$ & $\frac{3495}{56}$ & $\frac{1815}{28}$ & $\frac{975}{14}$ & $\frac{555}{7}$ & $\frac{345}{28}$ \\
    \hline
 II &  3, 1, 2, 4, 0 & $45$ & $60$ & $60$ & $30$ & $\frac{165}{2}$ & $\frac{135}{2}$ & $60$ &  $60$ & $60$ & $15$ & $\frac{285}{7}$ & $\frac{360}{7}$ & $\frac{300}{7}$ & $\frac{180}{7}$ & $\frac{1125}{14}$ & $\frac{975}{14}$ & $\frac{450}{7}$ & $\frac{480}{7}$ & $\frac{540}{7}$ & $\frac{135}{7}$  \\
  \hline
III & 3, 1, 2, 4, 0 & $\frac{75}{2}$ & $60$ &  $60$ & $\frac{375}{8}$ & $90$ & $\frac{285}{4}$ & $60$ & $60$ & $\frac{435}{8}$ & $0$ & $\frac{1905}{56}$ &  $\frac{1485}{28}$ &  $\frac{645}{14}$ & $\frac{225}{7}$ &  $\frac{9255}{112}$ &  $\frac{8175}{112}$ & $\frac{3555}{56}$ &  $\frac{1875}{28}$ &  $\frac{1035}{14}$ &  $\frac{825}{56}$ \\
 \hline
IV & 3, 1, 2, 4, 0 & $\frac{135}{4}$ &  $60$  & $60$ & $\frac{191}{4}$ & $\frac{179}{2}$ & $\frac{585}{8}$ & $60$ & $60$ & $\frac{439}{8}$ & $1$ & $\frac{855}{28}$ & $\frac{375}{7}$ &  $\frac{330}{7}$ & $\frac{240}{7}$ & $\frac{4635}{56}$ & $\frac{4185}{56}$ &  $\frac{885}{14}$ & $\frac{465}{7}$ &  $\frac{510}{7}$ &  $\frac{405}{28}$ \\
 \hline
V & 3, 1, 2, 4, 0 & $30$ & $60$ & $60$ & $\frac{103}{2}$ & $\frac{179}{2}$ & $75$ & $60$ & $60$ & $53$ & $1$ & $\frac{375}{14}$ & $\frac{375}{7}$ & $\frac{330}{7}$ & $\frac{240}{7}$  & $\frac{2265}{28}$ & $\frac{2145}{28}$ & $\frac{885}{14}$ & $\frac{465}{7}$ & $\frac{510}{7}$ & $\frac{255}{14}$
 \\
 \hline
VI &  3, 2, 1, 4, 0 & $\frac{105}{4}$ & $45$ & $60$ & $\frac{195}{4}$ &  $\frac{675}{8}$ &  $\frac{615}{8}$ & $\frac{135}{2}$ & $60$ & $ 60$ & $\frac{45}{4}$ & $\frac{345}{14}$ & $\frac{585}{14}$ & $\frac{375}{7}$ & $\frac{330}{7}$ & $\frac{585}{7}$  & $\frac{2175}{28}$ & $\frac{1935}{28}$ & $\frac{885}{14}$ & $\frac{465}{7}$ & $\frac{90}{7}$ \\
 \hline
 VII &  2, 1, 3, 4, 0 & $15$ & $60$ & $60$ & $75$ & $\frac{165}{2}$ & $\frac{165}{2}$ & $60$ & $60$ & $30$ & $15$ & $15$ & $45$ & $30$ & $60$ & $75$ & $\frac{165}{2}$ & $\frac{135}{2}$ &  $75$ & $60$ & $30$  \\
  \hline
 VIII & 3, 1, 2, 4, 0  & $\frac{75}{4}$ & $45$ & $60$ & $\frac{285}{4}$ & $\frac{645}{8}$ & $\frac{645}{8}$ & $\frac{135}{2}$ & $60$ & $\frac{75}{2}$ & $\frac{75}{4}$ & $\frac{75}{4}$ & $\frac{75}{2}$ & $45$ & $\frac{255}{4}$ & $\frac{615}{8}$ & $\frac{645}{8}$  & $\frac{285}{4}$ & $\frac{135}{2}$ & $\frac{105}{2}$ & $\frac{105}{4}$  \\
  \hline
IX & 4, 1, 2, 3, 0  & $\frac{165}{8}$ & $45$ & $60$ & $\frac{585}{8}$ & $\frac{1275}{16}$ & $\frac{1275}{16}$ & $\frac{135}{2}$ &  $60$   & $\frac{135}{4}$ & $\frac{165}{8}$ & $\frac{165}{8}$ & $\frac{165}{4}$ & $\frac{105}{2}$ & $\frac{555}{8}$ & $\frac{1245}{16}$ & $\frac{1275}{16}$ & $\frac{555}{8}$ & $\frac{255}{4}$ & $\frac{165}{4}$ & $\frac{195}{8}$  \\
 \hline
 X &  3, 1, 2, 4, 0 & $\frac{45}{2}$  & $45$ & $60$ & $\frac{585}{8}$ & $\frac{645}{8}$  & $\frac{315}{4}$ & $\frac{135}{2}$ & $60$ & $\frac{135}{4}$ & $\frac{75}{4}$ &  $\frac{45}{2}$ & $\frac{165}{4}$ & $\frac{105}{2}$ & $\frac{555}{8}$ &  $\frac{315}{4}$ & $\frac{315}{4}$ & $\frac{555}{8}$ & $\frac{255}{4}$ & $\frac{165}{4}$ & $\frac{45}{2}$  \\
  \hline
XI &  1, 2, 3, 4, 0  & $\frac{405}{16}$ & $\frac{75}{2}$ & $60$ & $\frac{285}{4}$ &  $\frac{2365}{32}$ & $\frac{2475}{32}$ & $\frac{285}{4}$ & $60$ & $\frac{75}{2}$ & $\frac{315}{16}$ & $\frac{405}{16}$  &  $\frac{285}{8}$ & $\frac{225}{4}$ & $\frac{555}{8}$ & $\frac{2535}{32}$  & $\frac{2475}{32} $ & $\frac{1155}{16}$ & $\frac{495}{8}$ & $\frac{165}{4}$ & $\frac{345}{16}$  \\
 \hline
\end{tabular}
}
\caption{$x_+$ and $x_-$ for maximal greedy-drawable trees}
\label{degree5_solution}
\end{table}
\renewcommand{\arraystretch}{1}

On the other hand, we prove the only-if part by verifying infeasibility of the system~(\ref{ineq_main}) for the maximal vectors $(\varphi_0,\dots,\varphi_4) \in \tilde{\Phi}$ (with respect to the lexicographic order) that do not correspond to the angle types listed in Table~\ref{table:degree5_case3}. The maximal vectors are described as follows:
\[
\begin{array}{l}
(180,120,120,120,31.875),  
(180,120,120,105,37.5),  
(180,120,120,97.5,45.9375), \\
(180,120,120,93.75,46.875),
(180,120,120,91.875,48.75),
(180,120,120,60, 60),  \\
(180,120,105,105,52.5), 
(180,120,105,91.875,60), 
(180,120,97.5,97.5,60), \\
(180,105,105,105,60),  
(180,105,93.75,91.875,90.9375), 
(180,105,91.875,91.875,91.875), \\
(180,97.5,97.5,93.75,93.75), 
(180,97.5,97.5,97.5,90.46875).   
\end{array}   
\]
We must verify infeasibility of the system~(\ref{ineq_main}) for each permutation $\tau$ on $\{ 0,1,\dots,4\}$ and for each vector $(\varphi_0,\dots,\varphi_4)$ listed above.
This was achieved through the following three steps.

\subsection*{Step 1. }
If $S$ is empty, then the system~(\ref{ineq_main}) is clearly infeasible.
Since $S$ is defined by linear inequalities, emptiness of $S$ can be checked by linear programming.
Using QSopt\_ex Rational LP Solver~\cite{ACDE}, we verified emptiness of $S$
except for the 14 cases listed in Table~\ref{table:remaining_cases}.
(We ignore the vectors obtained by applying permutations that correspond to rotations or reflections to those vectors)
\begin{table}
\centering
\begin{tabular}{|c|c|}
\hline
& $(\varphi_{\tau(0)}, \dots, \varphi_{\tau (4)})$ \\
\hline
(a) & $(120,120,120,31.875,180)$ \\
\hline
(b) &  $(105,120,120,37.5,180)$ \\
\hline
(c) & $(97.5, 120, 120, 45.9375, 180)$\\
\hline
(d) & $(93.75, 120, 120, 46.875, 180)$\\
\hline
(e) & $(91.875, 120, 120, 48.75, 180)$\\
\hline
(f) & $(105,120,105, 52.5, 180)$ \\
\hline
(g) & $(105, 105, 120, 52.5, 180)$ \\
\hline
\end{tabular}
\begin{tabular}{|c|c|}
\hline
& $(\varphi_{\tau(0)}, \dots, \varphi_{\tau (4)})$ \\
\hline
(h) & $(91.875,120,105,60,180)$ \\
\hline
(i) & $(60,120,91.875,105,180)$ \\
\hline 
(j) & $(91.875, 105, 120, 60, 180)$ \\
\hline 
(k) & $(97.5,97.5,120,60,180)$ \\
\hline
(l) & $(90.9375,105,93.75,91.875,180)$ \\
\hline
(m) & $(90.9375,93.75,105,91.875,180)$ \\
\hline
(o) & $(97.5,97.5,97.5,90.46875,180)$ \\
\hline
\end{tabular}
\caption{Remaining cases: $S_{\tau}(\varphi_0,\dots,\varphi_4)$ is not empty}
\label{table:remaining_cases}
\end{table}

\subsection*{Step 2.}
For the 14 cases in Table~\ref{table:remaining_cases}, the set $S$ is non-empty, and we must show that adding the equation $\omega = 0$ makes the system~(\ref{ineq_main}) infeasible.
Since $S$ is connected and $\omega$ is continuous on $S$, it can be done by verifying that the function $\omega$ is always positive or always negative on $S$.
However, since $\omega$ is a non-linear function, it is difficult to verify directly.
Therefore, we first compute the axis-aligned bounding box $B(S)$ of $S$  (by determining the maximum and minimum of $\beta_i$ and $\gamma_i$ over $S$ for all $i$) and estimate the value of $\omega$ on $B(S)$.
If $\omega$ is always positive or always negative on $B(S)$,   the system~(\ref{ineq_main}) is clearly infeasible (see Figure~\ref{fig:bounding_box1} for an intuition).
For the vectors $(\varphi_{\tau (0)},\dots,\varphi_{\tau (4)})$ in Table~\ref{table:remaining_cases},  one can easily verify that $B(S) \subset [0,90]^{10}$ (see Table~\ref{degree5_boundingbox}).
Therefore, letting $B(S) = [\beta_0^-,\beta_0^+] \times \dots \times [\gamma_4^-,\gamma_4^+]$, we have 
$\prod_{i=0}^4{\sin (\beta_i^-)} - \prod_{i=0}^4{\sin (\gamma_i^+)} \leq \omega (x) \leq\prod_{i=0}^4{\sin (\beta_i^+)} - \prod_{i=0}^4{\sin (\gamma_i^-)}$ for all $x \in B(S)$.
Thus, if $\prod_{i=0}^4{\sin (\beta_i^-)} - \prod_{i=0}^4{\sin (\gamma_i^+)}$  and $\prod_{i=0}^4{\sin (\beta_i^+)} - \prod_{i=0}^4{\sin (\gamma_i^-)}$ have the same sign,
then $\omega$ is always positive or always negative on $S$.
We determined the signs of $\prod_{i=0}^4{\sin (\beta_i^-)} - \prod_{i=0}^4{\sin (\gamma_i^+)}$  and $\prod_{i=0}^4{\sin (\beta_i^+)} - \prod_{i=0}^4{\sin (\gamma_i^-)}$ using MPFI library~\cite{RM05} through SageMath~\cite{S21}.
Computational results are summarized in Table~\ref{degree5_boundingbox}.
With the exceptions of Cases (c), (d), (e), and (j),  we were able to verify that the original system~(\ref{ineq_main}) is infeasible.

\renewcommand{\arraystretch}{1.2}
\begin{table}[htb]
\centering
{\tabcolsep = 0.1mm
\scriptsize
  \begin{tabular}{|c|cccccccccc|cccccccccc|c|}
    \hline
 &   $\beta_0^+$ & $\beta_1^+$ & $\beta_2^+$ & $\beta_3^+$ & $\beta_4^+$ & $\gamma_0^+$ & $\gamma_1^+$ & $\gamma_2^+$ & $\gamma_3^+$ & $\gamma_4^+$ & $\beta_0^-$ & $\beta_1^-$ & $\beta_2^-$ & $\beta_3^-$ & $\beta_4^-$ & $\gamma_0^-$ & $\gamma_1^-$ & $\gamma_2^-$ & $\gamma_3^-$ & $\gamma_4^-$ &  $\omega (x)$ \\
\hline 
  (a) &   $\frac{975}{16}$ & $\frac{975}{16}$ & $\frac{975}{16}$ & $\frac{255}{8}$ & $90$ &  $\frac{4185}{64}$ & $\frac{1055}{16}$ & $\frac{1935}{28}$ & $\frac{1095}{14}$ & $\frac{465}{56}$ & $\frac{1575}{32}$ & $\frac{385}{8}$ & $\frac{585}{14}$ & $\frac{165}{7}$ & $\frac{9615}{112}$ & $\frac{465}{8}$ & $\frac{465}{8}$ & $\frac{465}{8}$ & $\frac{465}{8}$ & $0$ &
 \begin{tabular}{l} $ \geq 0.04019886771016 - 3.25 \times 10^{-15}$ \\ $ \leq 0.352662021312439 + 9.76 \times 10^{-16}$ \end{tabular}\\
    \hline
(b) & 	 $45$ & $60$ & $60$ & $\frac{75}{2}$ & $90$ & $\frac{1125}{16}$ & $\frac{255}{4}$ & $\frac{465}{7}$ & $\frac{510}{7}$ & $\frac{45}{14}$ & $\frac{315}{8}$ & $\frac{105}{2}$ & $\frac{330}{7}$ & $\frac{240}{7}$ & $\frac{2475}{28}$ & $\frac{135}{2}$ & $60$ & $60$ &  $60$ & $0$ &  \begin{tabular}{l} $\geq 0.16628181995631 - 4.00 \times 10^{-15}$ \\ $\leq 0.322844500932659 + 8.88 \times 10^{-16}$ \end{tabular} \\
\hline
(c) & $\frac{375}{8}$ & $\frac{1035}{16}$ & $\frac{1995}{32}$ & $\frac{735}{16}$ & $90$ & $\frac{9945}{128}$ & $\frac{2095}{32}$ & $\frac{3735}{56}$ & $\frac{2055}{28}$ & $\frac{1425}{112}$ &  $\frac{1575}{64}$ &  $\frac{785}{16}$ &  $\frac{1305}{28}$ &  $\frac{465}{14}$ &  $\frac{18735}{224}$ &  $\frac{1065}{16}$ &  $\frac{405}{8}$ &   $\frac{885}{16}$ & $\frac{885}{16}$ & $0$ & \begin{tabular}{l} $ \geq -0.04784331869952 - 2.38 \times 10^{-15}$ \\ $ \leq 0.41994431818384 + 2.82 \times 10^{-15}$ \end{tabular}\\
\hline
(d) & $\frac{165}{4}$ & $\frac{255}{4}$ & $\frac{495}{8}$ & $\frac{375}{8}$ & $90$ &$\frac{5025}{64}$ & $\frac{1035}{16}$ & $\frac{1845}{28}$ & $\frac{1005}{14}$ & $\frac{585}{56}$ & $\frac{735}{32}$ & $\frac{405}{8}$ & $\frac{675}{14}$ & $\frac{255}{7}$ & $\frac{9495}{112}$ & $\frac{555}{8}$ & $\frac{105}{2}$ & $\frac{225}{4}$ & $\frac{225}{4}$ & $0$ &  \begin{tabular}{l} $ \geq -0.00626089365276 - 3.13 \times 10^{-15}$ \\ $ \leq 0.38064127480888 + 2.60 \times 10^{-15}$ \end{tabular}\\
\hline
(e) & $\frac{165}{4}$ & $\frac{1035}{16}$ & $\frac{1995}{32}$ & $\frac{195}{4}$ & $90$ & $\frac{2565}{32}$ & $65$ & $\frac{1845}{28}$ & $\frac{1005}{14}$ &$\frac{345}{28}$ & $\frac{315}{16}$ & $50$ & $\frac{675}{14}$ & $\frac{255}{7}$ & $\frac{4695}{56}$ & $\frac{555}{8}$ &  $\frac{405}{8}$ & $\frac{885}{16}$ & $\frac{885}{16}$ & $0$ & \begin{tabular}{l} $ \geq 0.39692841575826 - 2.70 \times 10^{-15}$ \\ $ \leq -0.05161138747194 + 3.35 \times 10^{-15}$ \end{tabular}\\
\hline
(f) &  $45$ & $60$ & $60$ & $\frac{105}{2}$ & $90$ & $\frac{1155}{16}$ & $\frac{265}{4}$ & $\frac{495}{7}$ & $\frac{465}{7}$ & $\frac{75}{14}$ & $\frac{285}{8}$ & $\frac{95}{2}$ & $\frac{270}{7}$ & $\frac{330}{7}$ &  $\frac{2445}{28}$ & $\frac{135}{2}$ & $60$ & $60$ & $45$ & $0$ & \begin{tabular}{l} $ \geq 0.42073914509770 - 3.48 \times 10^{-15}$ \\ $ \leq 0.12567774999668 + 5.19 \times 10^{-15}$ \end{tabular}\\
\hline
(g) & $45$ & $60$ & $\frac{135}{2}$ & $\frac{105}{2}$ & $90$ & $\frac{1215}{16}$ &  $\frac{285}{4}$ & $\frac{450}{7}$ & $\frac{480}{7}$ &  $\frac{135}{14}$ &  $\frac{225}{8}$ &  $\frac{75}{2}$ &  $\frac{360}{7}$ & $\frac{300}{7}$ &  $\frac{2385}{28}$ &  $\frac{135}{2}$ &  $60$ & $45$ & $\frac{105}{2}$ & $0$ & \begin{tabular}{l} $ \geq 0.44884628439700 - 3.80 \times 10^{-15}$ \\ $ \leq 0.02302084943542 + 1.68 \times 10^{-15}$ \end{tabular}\\
\hline
(h) & 	$\frac{135}{4}$ & $\frac{975}{16}$ & $\frac{1935}{32}$ & $60$ & $90$ & $\frac{315}{4}$ & $\frac{525}{8}$ & $\frac{1935}{28}$ & $\frac{885}{14}$ & $\frac{45}{7}$ & $\frac{45}{2}$ & $\frac{195}{4}$ & $\frac{585}{14}$ & $\frac{375}{7}$ & $\frac{1215}{14}$ & $\frac{585}{8}$ & $\frac{465}{8}$ & $\frac{945}{16}$ & $\frac{705}{16}$ & $0$ & \begin{tabular}{l} $ \geq 0.07059309513054 - 2.99 \times 10^{-15}$ \\ $ \leq 0.365922064215845 + 9.30 \times 10^{-16}$ \end{tabular}\\
\hline
(i) & 	$\frac{45}{14}$ & $\frac{1725}{28}$ & $\frac{3405}{56}$ & $75$ & $\frac{1215}{16}$ & $90$ & $60$ & $\frac{1005}{16}$ & $\frac{75}{2}$ & $\frac{135}{4}$ & $0$ & $\frac{915}{16}$ & $\frac{435}{8}$ & $\frac{285}{4}$ & $\frac{585}{8}$ & $\frac{2475}{28}$ & $\frac{795}{14}$ & $\frac{1635}{28}$ & $30$ & $\frac{225}{8}$ & \begin{tabular}{l} $ \geq -0.260537672566709 - 9.48 \times 10^{-16}$ \\ $ \leq -0.127529352077151 + 6.17 \times 10^{-16}$ \end{tabular}\\
\hline
(j) & 	$\frac{135}{4}$ & $\frac{975}{16}$ & $\frac{2175}{32}$ & $60$ & $90$ & $\frac{165}{2}$ & $\frac{565}{8}$ & $\frac{1755}{28}$ & $\frac{915}{14}$ & $\frac{75}{7}$ & $15$ & $\frac{155}{4}$ & $\frac{765}{14}$ & $\frac{345}{7}$ & $\frac{1185}{14}$ & $\frac{585}{8}$ & $\frac{465}{8}$ & $\frac{705}{16}$ & $\frac{825}{16}$ & $0$  & \begin{tabular}{l} $\geq -0.040708347750012 - 9.41 \times 10^{-16}$ \\ $\leq 0.38984862337978 + 5.44 \times 10^{-15}$ \end{tabular} \\
\hline
(k) & 	 $30$ & $45$ & $\frac{255}{4}$ & $60$ & $90$ & $\frac{315}{4}$ & $\frac{145}{2}$ & $\frac{855}{14}$ & $\frac{435}{7}$ & $\frac{30}{7}$ & $\frac{45}{2}$ &  $35$ & $\frac{405}{7}$ & $\frac{390}{7}$ & $\frac{615}{7}$ & $75$ & $\frac{135}{2}$ & $\frac{105}{2}$ & $\frac{225}{4}$ &$ 0$ & \begin{tabular}{l} $\geq 0.099362774740883 - 9.62 \times 10^{-16}$ \\ $\leq 0.274610072620768 + 8.01 \times 10^{-16}$  \end{tabular} \\
\hline
(l) & 	$\frac{405}{16}$ & $\frac{195}{4}$ & $\frac{495}{8}$ & $\frac{1185}{16}$ & $\frac{2595}{32}$ & $\frac{315}{4}$ & $\frac{1095}{16}$ & $\frac{495}{8}$ & $\frac{75}{2}$ & $\frac{165}{8}$ & $\frac{45}{2}$ & $\frac{345}{8}$ & $\frac{225}{4}$ & $\frac{285}{4}$ & $\frac{1275}{16}$ & $\frac{2475}{32}$ & $\frac{525}{8}$ & $\frac{225}{4}$ & $\frac{255}{8}$ & $\frac{285}{16}$ & \begin{tabular}{l} $\geq 0.03013991922475 - 3.53 \times 10^{-15}$ \\ $\leq 0.14994363475076 + 5.09 \times 10^{-15}$   \end{tabular} \\
\hline
(m) & $\frac{315}{16}$ & $\frac{75}{2}$ & $\frac{495}{8}$ & $\frac{1095}{16}$ & $\frac{2505}{32}$ & $\frac{1305}{16}$ & $\frac{1185}{16}$ & $\frac{495}{8}$ & $\frac{195}{4}$ & $\frac{105}{4}$ & $\frac{135}{8}$ & $\frac{255}{8}$ & $\frac{225}{4}$ & $\frac{525}{8}$ & $\frac{615}{8}$ & $\frac{2565}{32}$ & $\frac{285}{4}$ & $\frac{225}{4}$ & $\frac{345}{8}$ & $\frac{375}{16}$ & \begin{tabular}{l} $\geq -0.16587845769802 - 3.03 \times 10^{-15}$ \\ $\leq -0.04621984365605 + 2.29 \times 10^{-15}$  \end{tabular} \\
\hline
(o) & 	 $\frac{435}{16}$ & $\frac{315}{8}$ & $\frac{975}{16}$ & $\frac{2295}{32}$ & $\frac{645}{8}$ & $\frac{4935}{64}$ & $\frac{2295}{32}$ & $\frac{975}{16}$ & $\frac{315}{8}$ & $\frac{645}{32}$ & $\frac{825}{32}$ & $\frac{585}{16}$ & $\frac{465}{8}$ & $\frac{1125}{16}$ & $\frac{5115}{64}$ & $\frac{2445}{32}$ & $\frac{1125}{16}$ & $\frac{465}{8}$ & $\frac{585}{16}$ & $\frac{75}{4}$ & \begin{tabular}{l} $\geq 0.02710311900070 - 3.65 \times 10^{-15}$ \\ $\leq 0.08854800480938 + 2.10 \times  10^{-15}$ \end{tabular} \\
\hline
\end{tabular}
}
  \caption{Bounding box of $S$ and upper and lower bounds of $\omega (x)$}
  \label{degree5_boundingbox}
\end{table}
\renewcommand{\arraystretch}{1}

\subsection*{Step 3.}
For Cases (c), (d), (e), and (j) in Table~\ref{table:remaining_cases}, the bounding box $B(S)$ is too large to approximate $S$, and it is therefore impossible to prove that $\omega$ is always positive or always negative on $S$ in Step~2.
In those cases, we partition the solution space $S$ into $S_1,\dots,S_{k+1}$ by some parallel hyperplanes $H_1,\dots,H_k$
and then consider the bounding boxes $B(S_1),\dots,B(S_{k+1})$.
Then, we obtain a finer approximation $B(S_1) \cup \dots \cup B(S_{k+1})$ of $S$. (See Figure~\ref{fig:bounding_box2} for an intuition.)
If $\omega$ always takes the same sign on each $B(S_i)$, the original system~(\ref{ineq_main}) is infeasible.
For Case (c), we considered a partition of $S$ induced by hyperplanes $\gamma_4 = 7$ and $\gamma_4 = 11$, and obtained the results presented in Table~\ref{table:boundingbox2c}. 
For Cases (d), (e), and (j),  the obtained results are listed in Tables~\ref{table:boundingbox2d}--\ref{table:boundingbox2j}, respectively.
\\
\\
Combining the results in Steps~1--3, we complete the proof of the only-if part.
\begin{figure}[htb]
\begin{minipage}[t]{0.45\linewidth}
\centering
\includegraphics[scale=0.4, bb =  0 0 285 262, clip]{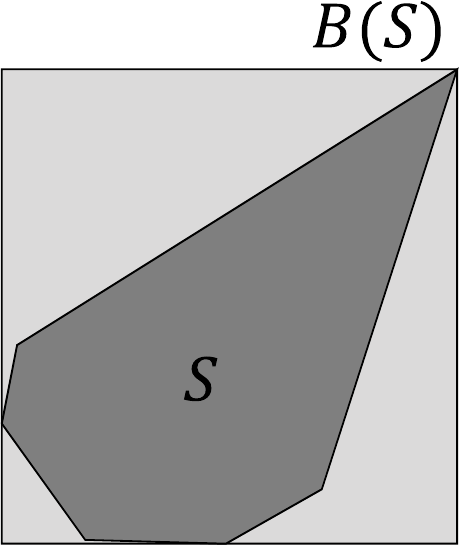} 
  \caption{Bounding box of $S$}      
\label{fig:bounding_box1}
\end{minipage}
\begin{minipage}[t]{0.45\linewidth}
\centering
\includegraphics[scale=0.4, bb =  0 0 222 264, clip]{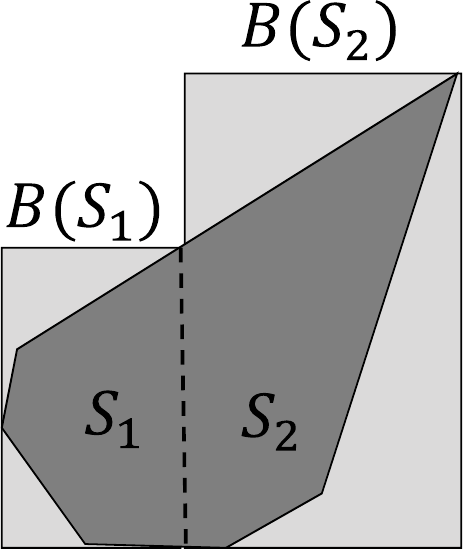} 
    \caption{Bounding boxes of $S_1$ and $S_2$} 
\label{fig:bounding_box2}
\end{minipage}
\end{figure}

\renewcommand{\arraystretch}{1.2}
\begin{table}[htb]
\centering
{\tabcolsep = 0.1mm
\scriptsize
  \begin{tabular}{|c|cccccccccc|cccccccccc|c|}
    \hline
 &   $\beta_0^+$ & $\beta_1^+$ & $\beta_2^+$ & $\beta_3^+$ & $\beta_4^+$ & $\gamma_0^+$ & $\gamma_1^+$ & $\gamma_2^+$ & $\gamma_3^+$ & $\gamma_4^+$ & $\beta_0^-$ & $\beta_1^-$ & $\beta_2^-$ & $\beta_3^-$ & $\beta_4^-$ & $\gamma_0^-$ & $\gamma_1^-$ & $\gamma_2^-$ & $\gamma_3^-$ & $\gamma_4^-$ &  $\omega (x)$ \\
\hline 
$S_1$ & $\frac{375}{8}$ & $\frac{1035}{16}$ & $\frac{1995}{32}$ & $\frac{735}{16}$ & $90$ & $\frac{9945}{128}$ & $\frac{2095}{32}$ & $\frac{3735}{56}$ & $\frac{1078}{15}$ & $7$ & $\frac{1575}{64}$ & $\frac{785}{16}$ & $\frac{1305}{28}$ & $\frac{ 544}{15}$ & $\frac{21503}{256}$ & $\frac{1065}{16}$ & $\frac{405}{8}$ & $\frac{885}{16}$ & $\frac{885}{16}$ & $0$ & \begin{tabular}{l} $ \geq 0.03993726639257 - 5.24 \times 10^{-15}$ \\ $\leq 0.41994431818384 + 2.82 \times 10^{-15}$ \end{tabular}\\
    \hline
  $S_2$ &   $\frac{319}{8}$ & $\frac{979}{16}$ & $\frac{1939}{32}$ & $\frac{623}{16}$ & $\frac{173}{2}$ & $\frac{9609}{128}$ & $\frac{6173}{96}$ & $\frac{3735}{56}$ & $\frac{1094}{15}$ & $11$ & $\frac{1911}{64}$ & $\frac{2467}{48}$ & $\frac{1305}{28}$ & $\frac{512}{15}$ & $\frac{21439}{256}$ & $\frac{1121}{16}$ & $\frac{461}{8}$ & $\frac{941}{16}$ & $\frac{941}{16}$ & $ 7$ & \begin{tabular}{l} $ \geq 0.01183838535761 - 5.48 \times 10^{-15}$ \\ $\leq 0.23617122360425 + 2.63 \times 10^{-15}$ \end{tabular}\\
    \hline
$S_3$ & $\frac{287}{8}$ & $\frac{227}{4}$ & $\frac{107}{2}$ & $\frac{559}{16}$ & $\frac{169}{2}$ & $\frac{ 9417}{128}$ & $\frac{6109}{96}$ & $\frac{3735}{56}$ & $\frac{2055}{28}$ & $\frac{1425}{112}$ & $\frac{2103}{64}$ & $\frac{2531}{48}$ & $\frac{1305}{28}$ & $\frac{465}{14}$ & $\frac{18735}{224}$ & $\frac{1153}{16}$ & $\frac{493}{8}$ & $\frac{253}{4}$ & $\frac{133}{2}$ & $11$ & \begin{tabular}{l} $ \geq 0.00422323812900 - 2.80 \times 10^{-15}$ \\ $\leq 0.09377125077374 + 5.77 \times 10^{-15}$ \end{tabular}\\
    \hline
\end{tabular}
}
  \caption{Bounding box of $S_i$ and upper and lower bounds of $\omega (x)$ (Case (c), $S_1 \coloneqq S \cap \{ \gamma_4 \leq 7 \}$, $S_2 \coloneqq S \cap \{ 7 \leq \gamma_4 \leq 11 \}$, $S_3 \coloneqq S \cap \{ \gamma_4 \geq 11 \}$)}
  \label{table:boundingbox2c}
\mbox{}\\
{\tabcolsep = 0.1mm
\scriptsize
  \begin{tabular}{|c|cccccccccc|cccccccccc|c|}
    \hline
 &   $\beta_0^+$ & $\beta_1^+$ & $\beta_2^+$ & $\beta_3^+$ & $\beta_4^+$ & $\gamma_0^+$ & $\gamma_1^+$ & $\gamma_2^+$ & $\gamma_3^+$ & $\gamma_4^+$ & $\beta_0^-$ & $\beta_1^-$ & $\beta_2^-$ & $\beta_3^-$ & $\beta_4^-$ & $\gamma_0^-$ & $\gamma_1^-$ & $\gamma_2^-$ & $\gamma_3^-$ & $\gamma_4^-$ &  $\omega (x)$ \\
\hline 
$S_1$ &$\frac{165}{4}$ &  $\frac{255}{4}$ & $\frac{495}{8}$ & $\frac{375}{8}$ & $90$ & $\frac{5025}{64}$ & $\frac{1035}{16}$ & $\frac{1845}{28}$ & $\frac{211}{3}$ & $5$ & $\frac{735}{32}$ & $\frac{405}{8}$ & $\frac{675}{14}$ & $\frac{118}{3}$ & $\frac{10895}{128}$ & $\frac{555}{8}$ & $\frac{105}{2}$ & $\frac{225}{4}$ & $\frac{225}{4}$ & $0$ & \begin{tabular}{l} $ \geq 0.075681007375979 - 8.99 \times 10^{-16}$ \\ $\leq 0.38064127480888 + 2.60 \times 10^{-15}$ \end{tabular}\\
\hline
$S_2$ & $\frac{145}{4}$ & $\frac{245}{4}$ & $\frac{485}{8}$ & $\frac{335}{8}$ & $\frac{175}{2}$ & $\frac{4905}{64}$ & $\frac{3065}{48}$ & $\frac{1845}{28}$ & $\frac{1005}{14}$ & $\frac{585}{56}$ & $\frac{855}{32}$ & $\frac{1255}{24}$ & $\frac{675}{14}$ & $\frac{255}{7}$ & $\frac{9495}{112}$ & $\frac{575}{8}$ & $\frac{115}{2}$ & $\frac{235}{4}$ & $\frac{235}{4}$ & $5$ & \begin{tabular}{l} $ \geq 0.01953883026443 - 1.70 \times 10^{-15}$ \\ $\leq 0.25021013905337 + 4.07 \times 10^{-15}$ \end{tabular}\\
\hline
\end{tabular}
}
  \caption{Bounding box of $S_i$ and upper and lower bounds of $\omega (x)$ (Case (d), $S_1 \coloneqq S \cap \{ \gamma_4 \leq 5 \}$, $S_2 \coloneqq S \cap \{ \gamma_4 \geq 5 \}$)}
  \label{table:boundingbox2d}
\end{table}

\begin{table}
{\tabcolsep = 0.1mm
\scriptsize
  \begin{tabular}{|c|cccccccccc|cccccccccc|c|}
    \hline
 &   $\beta_0^+$ & $\beta_1^+$ & $\beta_2^+$ & $\beta_3^+$ & $\beta_4^+$ & $\gamma_0^+$ & $\gamma_1^+$ & $\gamma_2^+$ & $\gamma_3^+$ & $\gamma_4^+$ & $\beta_0^-$ & $\beta_1^-$ & $\beta_2^-$ & $\beta_3^-$ & $\beta_4^-$ & $\gamma_0^-$ & $\gamma_1^-$ & $\gamma_2^-$ & $\gamma_3^-$ & $\gamma_4^-$ &  $\omega (x)$ \\
\hline 
$S_1$ & $\frac{165}{4}$ & $\frac{1035}{16}$ & $\frac{1995}{32}$ & $\frac{195}{4}$ & $90$ & $\frac{2565}{32}$ & $65$ & $\frac{1845}{28}$ & $\frac{2111}{30}$ & $7$ & $\frac{315}{16}$ & $50$ & $\frac{675}{14}$ & $\frac{589}{15}$ & $\frac{5387}{64}$ & $\frac{555}{8}$ & $\frac{405}{8}$ & $\frac{885}{16}$ & $\frac{885}{16}$ & $0$ & \begin{tabular}{l} $ \geq 0.027606528320338 - 6.58 \times 10^{-16}$ \\ $\leq 0.39692841575826 + 2.70 \times 10^{-15}$ \end{tabular} \\
\hline
$S_2$ & $\frac{137}{4}$ & $\frac{979}{16}$ & $\frac{1939}{32}$ & $\frac{167}{4}$ & $\frac{173}{2}$ & $\frac{2481}{32}$ & $\frac{383}{6}$ & $\frac{1845}{28}$ & $\frac{427}{6}$ & $10$ & $\frac{399}{16}$ & $\frac{157}{3}$ & $\frac{675}{14}$ & $\frac{113}{3}$ & $\frac{5375}{64}$ & $\frac{583}{8}$ & $\frac{461}{8}$ & $\frac{941}{16}$ & $\frac{941}{16}$ & $7$ & \begin{tabular}{l} $ \geq 0.01976589000208 - 1.08 \times 10^{-15}$ \\ $\leq 0.21354032967021 + 5.02 \times 10^{-15}$ \end{tabular} \\
\hline
$S_3$ & $\frac{125}{4}$ & $\frac{235}{4}$ & $\frac{115}{2}$ & $\frac{155}{4}$ & $85$ & $\frac{2445}{32}$ & $\frac{190}{3}$ & $\frac{1845}{28}$ & $\frac{1005}{14}$ & $\frac{345}{28}$ & $\frac{435}{16}$ & $\frac{160}{3}$ & $\frac{675}{14}$ & $\frac{255}{7}$ & $\frac{4695}{56}$ & $\frac{595}{8}$ & $\frac{485}{8}$ & $\frac{245}{4}$ & $\frac{125}{2}$ & $10$ & \begin{tabular}{l} $ \geq 0.00062581827947 - 2.19 \times 10^{-15}$ \\ $\leq 0.11990580211028 + 2.31 \times 10^{-15}$ \end{tabular} \\
\hline
\end{tabular}
}
  \caption{Bounding box of $S_i$ and upper and lower bounds of $\omega (x)$ (Case (e), $S_1 \coloneqq S \cap \{ \gamma_4 \leq 7 \}$, $S_2 \coloneqq S \cap \{ 7 \leq \gamma_4 \leq 10 \}$, $S_3 \coloneqq S \cap \{ \gamma_4 \geq 10 \}$)}
  \label{table:boundingbox2e}
\end{table}

\begin{table}
{\tabcolsep = 0.1mm
\scriptsize
  \begin{tabular}{|c|cccccccccc|cccccccccc|c|}
    \hline
 &   $\beta_0^+$ & $\beta_1^+$ & $\beta_2^+$ & $\beta_3^+$ & $\beta_4^+$ & $\gamma_0^+$ & $\gamma_1^+$ & $\gamma_2^+$ & $\gamma_3^+$ & $\gamma_4^+$ & $\beta_0^-$ & $\beta_1^-$ & $\beta_2^-$ & $\beta_3^-$ & $\beta_4^-$ & $\gamma_0^-$ & $\gamma_1^-$ & $\gamma_2^-$ & $\gamma_3^-$ & $\gamma_4^-$ &  $\omega (x)$ \\
\hline 
  $S_1$ &  $\frac{135}{4}$ & $\frac{975}{16}$ &  $\frac{2175}{32}$ & $60$ & $90$ &  $\frac{165}{2}$ & $\frac{565}{8}$ & $\frac{1755}{28}$ & $\frac{1931}{30}$ & $7$ & $15$ & $\frac{155}{4}$ & $\frac{765}{14}$ & $\frac{769}{15}$ & $\frac{679}{8}$ & $\frac{585}{8}$ & $\frac{465}{8}$ &
$\frac{705}{16}$ & $\frac{825}{16}$ & $0$
& \begin{tabular}{l} $ \geq 0.01134976797036 - 1.92 \times 10^{-15}$ \\ $\leq 0.38984862337978 + 5.44 \times 10^{-15}$ \end{tabular}\\
    \hline
  $S_2$ &  $\frac{107}{4}$ & $\frac{199}{4}$ & $\frac{499}{8}$ & $53$ & $\frac{173}{2}$ & $\frac{639}{8}$ &  $\frac{1667}{24}$ & $\frac{1755}{28}$ &  $\frac{915}{14}$ & $\frac{75}{7}$ & $\frac{81}{4}$ & $\frac{493}{12}$ & $\frac{765}{14}$ &  $\frac{345}{7}$ &  $\frac{1185}{14}$ & $\frac{613}{8}$ & $\frac{521}{8}$ & $\frac{221}{4}$ & $\frac{461}{8}$ &  $7$ &
 \begin{tabular}{l} $ \geq 0.00159668015617 - 2.56 \times 10^{-15}$ \\ $\leq 0.167983216625798 + 9.33 \times 10^{-16}$ \end{tabular}\\
    \hline
\end{tabular}
}
  \caption{Bounding box of $S_i$ and upper and lower bounds of $\omega (x)$ (Case (j), $S_1 \coloneqq S \cap \{ \gamma_4 \leq 7 \}$, $S_2 \coloneqq S \cap \{ \gamma_4 \geq 7 \}$)}
  \label{table:boundingbox2j}
\end{table}
\renewcommand{\arraystretch}{1}

\end{proof}

\newpage

\section{Greedy-drawable pseudo-trees}
In this section, we characterize greedy-drawable \emph{pseudo-trees}, i.e., graphs obtained by adding one edge to a tree.
Let ${\cal T}$ be a pseudo-tree.
Then, ${\cal T}$ contains a single cycle. Let ${\cal C} =v_0,\dots,v_{m-1},v_0$ be the cycle in ${\cal T}$.
If we remove the edges of ${\cal C}$ from ${\cal T}$, we obtain $m$ trees.
Let $T_i$ be the tree that contains $v_i$.
We consider $\widetilde{T}_i \coloneqq T_i + v_i\widetilde{v}_i$, where $\widetilde{v}_i$ is a new vertex.
Then, we consider the new pseudo-tree $\widetilde{{\cal T}}$, called \emph{auxiliary pseudo-tree} of ${\cal T}$, obtained  by 
joining each $\widetilde{T_i}$ to the cycle $\widetilde{C}=\widetilde{v}_0,\dots,\widetilde{v}_{m-1},\widetilde{v}_0$.
We first observe the following fact.
\begin{lem}
If a pseudo-tree ${\cal T}$ has a (planar) greedy drawing, then the drawing must be outerplanar. 
\end{lem}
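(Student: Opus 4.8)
The plan is to reduce the claim to a statement about the single simple polygon spanned by the cycle $C$, and then to recover, for the tree-parts hanging off that polygon, the same ``an edge cuts the vertex set into two half-planes'' behaviour that holds for trees. Fix a plane straight-line greedy drawing of ${\cal T}$. Since it is plane, $C$ is drawn as a simple polygon $P$, and $\mathbb{R}^2\setminus P$ consists of the bounded region $\mathrm{int}(P)$ and the unbounded region $\mathrm{ext}(P)$. Each $T_i$ meets $P$ only at $v_i$, so each branch of $T_i$ at $v_i$ (the subtree spanned by $v_i$ and one of its neighbours in $T_i$), with $v_i$ removed, is connected and disjoint from $\partial P$, hence lies entirely in $\mathrm{int}(P)$ or entirely in $\mathrm{ext}(P)$; and a vertex lies on the outer (unbounded) face precisely when it is not in $\mathrm{int}(P)$. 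Thus the drawing is outerplanar if and only if no branch is drawn inside $P$, and it suffices to exclude interior branches.

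Assume for contradiction that a branch $B$ of some $T_i$ at $v_i$ is drawn inside $P$, and let $q_1$ be the neighbour of $v_i$ lying in $B$. The key point is that $v_i$ is a cut vertex separating $V(B)\setminus\{v_i\}$ from $V({\cal T})\setminus V(B)$, and that along a greedy path (repeatedly step to a neighbour strictly closer to the target, which exists by greediness) the distance to the target strictly decreases, so the path cannot return to a part it has already left. Applying this to a greedy path from $q_1$ to an arbitrary vertex $t\notin V(B)$: the path must visit $v_i$, so $d(v_i,t)<d(q_1,t)$; hence every vertex outside $B$, and in particular every cycle vertex, lies in the open half-plane $h^{v_i}_{v_iq_1}$ bounded by the perpendicular bisector of $v_iq_1$. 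Applying it to a greedy path from $v_i$ to an arbitrary $q\in V(B)\setminus\{v_i\}$: that path's first step is forced to be $q_1$ and the path never leaves $B$, so $d(q_1,q)<d(v_i,q)$; hence $V(B)\setminus\{v_i\}\subseteq h^{q_1}_{v_iq_1}$.

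To conclude, observe that $h^{v_i}_{v_iq_1}$ is convex and contains every vertex of $P$, hence contains their convex hull and therefore the whole polygonal region $P$; since $B$ is drawn inside $P$, this gives $V(B)\setminus\{v_i\}\subseteq \mathrm{int}(P)\subseteq h^{v_i}_{v_iq_1}$. But $h^{v_i}_{v_iq_1}$ and $h^{q_1}_{v_iq_1}$ are disjoint, while $q_1\in V(B)\setminus\{v_i\}$ lies in the latter by the previous paragraph, a contradiction. Therefore no interior branch exists and the drawing is outerplanar.

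I expect the main obstacle to be the half-plane step: the convenient subtree/half-plane characterization of greedy drawings of trees is not literally available for pseudo-trees, so one must re-prove by hand that the edge $v_iq_1$ splits the vertex set into the two half-planes $h^{v_i}_{v_iq_1}$ and $h^{q_1}_{v_iq_1}$. The delicate points are that a greedy path, being strictly distance-decreasing, cannot re-enter $V(B)$ once it has crossed the cut vertex $v_i$ (so that one genuinely controls the relevant border crossing and, in the second application, pins down the first step), and that in a plane drawing no vertex of $B$ can lie on $\partial P$. Everything else is routine; note that it is exactly the boundedness of $\mathrm{int}(P)$ that is exploited, which is why the argument correctly fails to exclude branches drawn in the unbounded region $\mathrm{ext}(P)$.
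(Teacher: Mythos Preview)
Your proof is correct and follows the same core idea as the paper's: a bridge edge drawn inside the polygon forces all cycle vertices into one open half-plane (via the cut-vertex/greedy-path argument you spell out), so the polygon lies in that half-plane, contradicting the interior endpoint lying on the opposite side. The paper's proof is a two-sentence sketch that simply cites the tree half-plane characterization of Section~2; you rightly note that this needs a small adaptation for pseudo-trees and carry out that adaptation explicitly, but the route is the same.
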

\begin{proof}
Suppose that there is an edge $uv$ that is drawn inside the region bounded by the cycle.
Then, the entire cycle must be contained on the same side of $\mathrm{axis}(uv)$ by Proposition~\ref{prop:halfspace}.
However, since $\mathrm{axis}(uv)$ passes through the interior of the region, it is impossible.
\end{proof}
\\
\\
Thus, if ${\cal T}$ has a greedy drawing, then $\widetilde{{\cal T}}$ also has a greedy drawing.
A greedy drawing of   $\widetilde{{\cal T}}$ is constructed by drawing each edge $v_i\widetilde{v}_i$ infinitesimally small. 
Thus, we can always interpret a greedy drawing of ${\cal T}$ as a greedy drawing  of $\widetilde{{\cal T}}$ in which each edge $v_i\widetilde{v}_i$ is drawn infinitesimally small.
With this interpretation, we define $\mathrm{polygon}(T_i) \coloneqq \mathrm{polygon}(\widetilde{T}_i)$ and $\angle{T_i} \coloneqq \angle{\widetilde{T}_i}$.
Then, we can apply an argument similar to the proof of \cite[Lemma~2.16]{NP17}, and obtain the following lemma.
\begin{lem}
\label{lem:angle_sum_pt}
In any (planar) greedy drawing of the pseudo-tree ${\cal T}$, the following inequality holds:
\begin{equation} 
\sum_{i=0,\dots,m-1,|\angle{T_i}| > 0}{|\angle{T_i}|} > 180^\circ (m-2).
\label{ineq:pt}
\end{equation}
\end{lem}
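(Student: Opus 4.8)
The plan is to transport to the cycle $C$ the angle-counting argument behind inequality~(\ref{angle_ineq}). First I would fix a greedy drawing of ${\cal T}$; by the lemma just proved it is outerplanar, so $C$ is drawn as a simple $m$-gon and every other vertex of ${\cal T}$ lies outside it. Passing to $\widetilde{{\cal T}}$ (each new edge drawn infinitesimally small) and applying the shrinking argument described before the statement of this lemma, I would obtain a greedy drawing $\Gamma'$ of $\widetilde{{\cal T}}$ in which every subtree $T_i$ with $|\angle{T_i}|_*>0$ is drawn infinitesimally small with $|\angle{T_i}|$ arbitrarily close to $|\angle{T_i}|_*$ from below. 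Write $u_0,\dots,u_{m-1}$ for the cycle vertices in $\Gamma'$ and $\theta_i$ for the interior angle at $u_i$ of the $m$-gon $P$ they span, so that $\sum_{i=0}^{m-1}\theta_i=(m-2)180^\circ$.

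Next, for each index $i$ I would apply the half-plane characterization of Section~\ref{sec:preliminaries} to the internal edges of $T_i$, which continue to obey the tree greedy conditions inside $\Gamma'$: deleting any internal edge $uw$ of $T_i$ separates a proper subtree on the $u$-side from everything else on the $w$-side, and greedy routing from $u$ into the $w$-side forces that side into $h^w_{uw}$. Intersecting over all internal edges of $T_i$ shows that $\mathrm{polytope}(\widetilde{T}_i)$ contains all of $\widetilde{{\cal T}}\setminus T_i$, in particular the whole $m$-gon $P$ and every other subtree. For an open index $i$, in the shrunk limit $\mathrm{polytope}(\widetilde{T}_i)$ is arbitrarily close to an open cone of angular width $|\angle{T_i}|$ with apex near $u_i$; since this cone contains the two cycle edges at $u_i$ together with the infinitesimal subtrees just beyond $u_{i-1}$ and $u_{i+1}$, its width is strictly greater than $\theta_i$.

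Summing this over the open indices and letting each $|\angle{T_i}|\to|\angle{T_i}|_*$ gives $\sum_{i:\,|\angle{T_i}|_*>0}|\angle{T_i}|_*\ \ge\ \sum_{i:\,|\angle{T_i}|_*>0}\theta_i$, which together with $\sum_i\theta_i=(m-2)180^\circ$ already yields the strict inequality~(\ref{ineq:pt}) when all $m$ subtrees are open. The one ingredient with no counterpart in the tree case — where Lemma~\ref{lem:all_open_angles} lets one choose a root making every subtree open — is the possibility of subtrees with $|\angle{T_i}|_*\le 0$. Here I would first rule out two or more closed subtrees, by applying the argument of Lemma~\ref{lem:open_closed} to the two independent pendant subtrees of $\widetilde{{\cal T}}$ hanging at two distinct cycle vertices (each being isomorphic as a rooted tree to the corresponding $\widetilde{T}_i$, so that its opening angle is exactly $|\angle{T_i}|$). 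For the at most one remaining closed subtree $T_{j_0}$, I would argue that, being too ``wide'' to lie inside any half-plane, its drawing forces every open cone $\angle{T_i}$ that is required to contain it to pick up enough extra angular width to absorb the interior angle $\theta_{j_0}$ that $P$ spends at $u_{j_0}$, so that $\sum_{i:\,|\angle{T_i}|_*>0}|\angle{T_i}|_*$ still exceeds $(m-2)180^\circ$. I expect this last step — pinning down how a non-shrinkable closed subtree interacts with the cones of all the open subtrees — to be the main obstacle; everything else is the proof of \cite[Lemma~6]{NP17} rewritten on a cycle.
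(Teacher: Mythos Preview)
Your plan coincides with the paper's: pass to $\widetilde{{\cal T}}$, shrink each subtree $T_i$ with $|\angle T_i|_*>0$ via the analogue of \cite[Lemma~8]{NP17}, and then invoke the cone/angle-counting argument of \cite[Lemma~6]{NP17} on the resulting cycle polygon; the paper gives no more detail than those two citations. Your explicit isolation of the closed-subtree case as the one genuinely new ingredient (sidestepped in the tree setting by Lemma~\ref{lem:all_open_angles}, with no analogue on a cycle) is accurate and is precisely what an adaptation of \cite[Lemma~6]{NP17} has to absorb, though neither you nor the paper spell this step out in full.
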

As we will see in Theorem~\ref{thm:pseudo_trees}, the above inequality also works as a sufficient condition for greedy-drawability of pseudo-trees ${\cal T}$ having at least four subtrees $T_i$ with $|\angle{T_i}| \neq 180^\circ$.
To deal with the other case, we make some preparations.
Suppose that there are at most three subtrees $T_i$ with $|\angle{T_i}|_* \neq 180^\circ$ in ${\cal T}$.
Select three vertices $v_j,v_k,v_l$, called \emph{connection vertices}, on the cycle ${\cal C}$ in such a way that $T_j,T_k,T_l$ include all such subtrees.
We define a \emph{$Y$-transformed tree} of ${\cal T}$ to be the tree obtained from $T_j,T_k,T_l$ by connecting each of the vertices $v_j,v_k,v_l$ to a new vertex $w$, called the \emph{central vertex}  (See Figure~\ref{fig:y_transformed_tree}).

Now suppose  $m=3$, and $\widetilde{{\cal T}}$ is greedy-drawable.
Then $\widetilde{{\cal T}}$ has a greedy drawing in which each subtree $T_i$ ($=\widetilde{T}_i \setminus \widetilde{v}_i$) with $|\angle{T_i}|_* > 0$ is infinitesimally small. 
It can be proved by almost the same argument as the proof of  Lemma~2.18 in \cite{NP17}.
We give a brief overview here.
Since replacing a subpath of a greedy path with a line segment keeps greediness (Lemma~2.5 in \cite{NP17}), there is a greedy path between every vertex in $\widetilde{T}_i$ and every vertex in $\widetilde{T_j}$ ($i \neq j$) using the edge $\widetilde{v}_i\widetilde{v}_j$.
Therefore, $\widetilde{T}_i$ is contained in the half-plane $h_{\widetilde{v}_i\widetilde{v}_j}^{\widetilde{v}_i}$ and the half-plane $h_{uv}^u$ for every edge $uv$ in $\widetilde{T}_j$ ($i \neq j$) 
with $d_{\widetilde{{\cal T}}}(\widetilde{v_j}, u) < d_{\widetilde{{\cal T}}}(\widetilde{v_j}, v) $, where $d_{\widetilde{{\cal T}}}$ is the graph distance on $\widetilde{{\cal T}}$.
Then, applying the same argument as the proof of  Lemma~2.14~(a) in~\cite{NP17}, we can show that the apex of $\angle{T_i}$ is also contained in all of those half-planes.
Thus, if we draw $T_i$ infinitesimally small at the apex of $\angle{T_i}$ in such a way that $\angle{T_i}$ contains the original angle, the constructed drawing is a greedy drawing.
\begin{figure}[h]
\centering
\includegraphics[scale=0.3, bb =  0 0 776 366, clip]{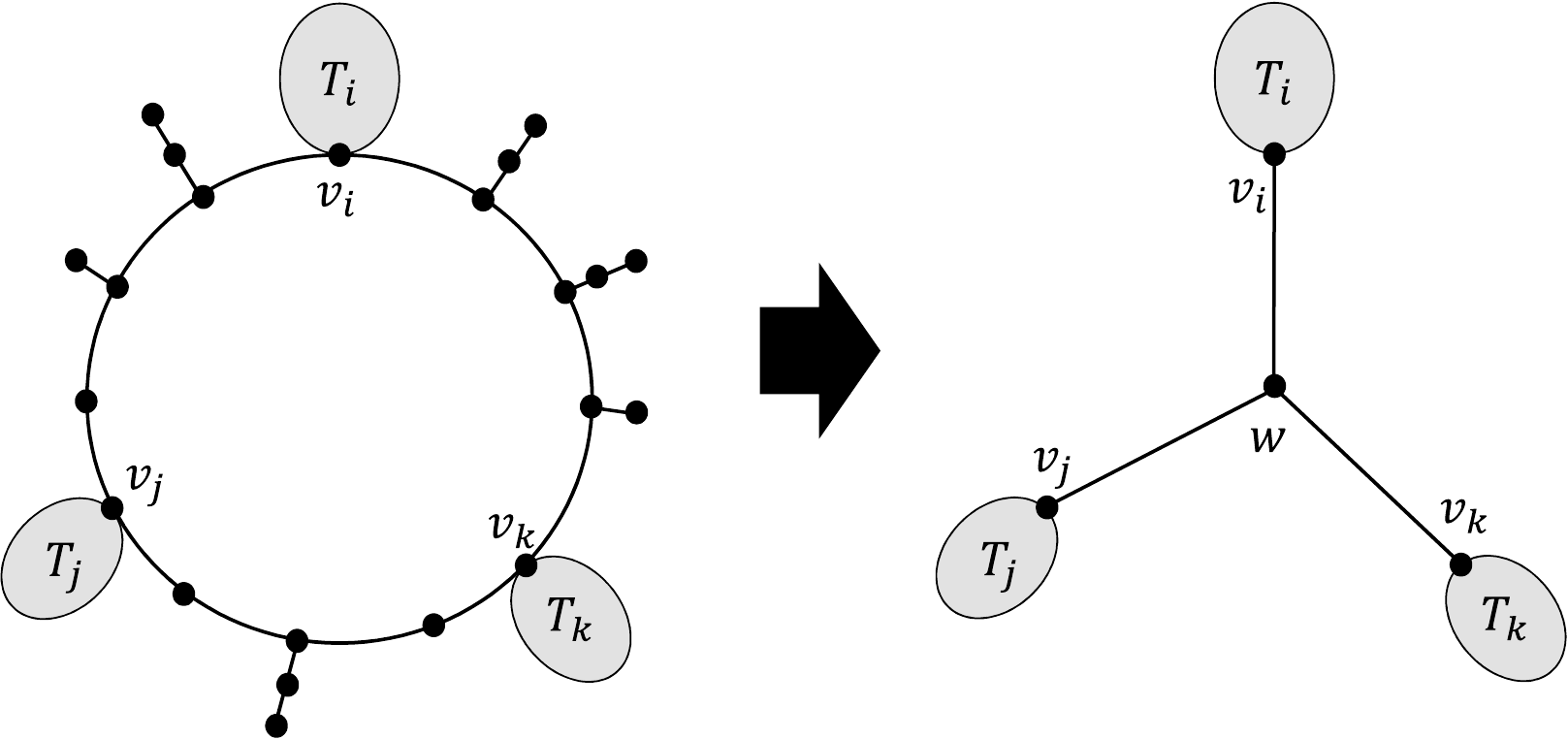} 
    \caption{Constructing a $Y$-transformed tree} 
\label{fig:y_transformed_tree}
\end{figure}

Now we are ready to present a characterization of greedy-drawable pseudo-trees.
\begin{thm}
Let ${\cal T}$ be a pseudo-tree with a cycle ${\cal C}=v_0,\dots,v_m,v_0$.
For $i=0,\dots,m$, let $T_i$ be the connected component of ${\cal T} \setminus {\cal C}$, which is a tree, that contains $v_i$.
Let $\varphi_0 \coloneqq |\angle{T_0}|_*,\dots,\varphi_{m-1} \coloneqq |\angle{T_{m-1}}|_*$, where we set $\varphi_i = 0$ if $|\angle{T_i}|_* \leq 0$.
Then, the pseudo-tree ${\cal T}$ is greedy-drawable if and only if one of the following conditions holds:
\begin{itemize}
\item[(a)] There are at least four angles $\varphi_i$ with  $\varphi_i \neq 180^\circ$, and the inequality $\sum_{i=0}^{m-1}{\varphi_i} > 180^\circ (m-2)$ holds.
\item[(b)] There are at most three angles $\varphi_i$ with $\varphi_i \neq 180^\circ$, and a $Y$-transformed tree of ${\cal T}$ is greedy-drawable. 
\end{itemize}
\label{thm:pseudo_trees}
\end{thm}
\begin{proof}
Let $k$ be the number of angles $\varphi_i$ with $\varphi_i \neq 180^\circ$.
We first prove the only-if part.
Suppose ${\cal T}$ is greedy-drawable.
Then, we have $\sum_{i=0}^{m-1}{\varphi_i} > 180^\circ (m-2)$ by Lemma~\ref{lem:angle_sum_pt}.
If $k \geq 4$, the condition~(a) is already fulfilled, and thus we consider the case that $k \leq  3$. 
Let $T$ be a $Y$-transformed tree of ${\cal T}$, and $v_a,v_b,v_c$ the connection vertices of $T$.
We prove that $T$ is greedy-drawable.
Since $\varphi_i = 180^\circ$ for all $i \notin \{ a,b,c \}$, the above inequality implies $\varphi_a + \varphi_b + \varphi_c > 180^\circ$.
If $\varphi_a,\varphi_b, \varphi_c > 0$, then, $T$ is greedy-drawable by Lemma~4.5 in \cite{NP17}.
Thus, we suppose one of $\varphi_i$, for $i \in \{ a,b,c \}$, satisfies $\varphi_i = 0$.
Without loss of generality, we assume that $\varphi_c = 0$.
Then, in any greedy drawing of ${\cal T}$, we have $|\angle{T_c}| \leq 0$, which also implies $|\angle{T_a}| > 0, |\angle{T_b}| > 0$ by Lemma~\ref{lem:angle_sum_pt}.
Now we consider the auxiliary pseudo-tree $\widetilde{{\cal T}}$ and
the graph  $\widetilde{{\cal T}}'$ obtained from the subtrees $\widetilde{T}_a,\widetilde{T}_b,\widetilde{T}_c$ by connecting each pair of the vertices $\widetilde{v}_a,\widetilde{v}_b,\widetilde{v}_c$.
To construct a greedy drawing of $T$, we first construct a greedy drawing of $\widetilde{{\cal T}}'$,  and then transform it into a greedy drawing of $T$.

Take a greedy drawing of ${\widetilde{\cal T}}$.
Slightly perturbating the drawing if necessary, we assume that $|\angle{T_c}| < 0$.
We transform the drawing into a drawing  $\Gamma_{\widetilde{\cal T}}$ in which $T_a$ ($=\widetilde{T}_a \setminus v_a$) and $T_b$ ($= \widetilde{T}_b \setminus v_b$) are drawn infinitesimally small.
Let $\Gamma_{\widetilde{{\cal T}}'}$ be the drawing of $\widetilde{{\cal T}}'$ obtained by restricting the drawing $\Gamma_{\widetilde{{\cal T}}}$ into $\widetilde{T}_a,\widetilde{T}_b,\widetilde{T_c}$,
and connecting each pair of $v_a, v_b, v_c$ by a line segment.
Since replacing a subpath of a greedy path with a line segment keeps greediness (Lemma~2.5 in \cite{NP17}), $\Gamma_{\widetilde{{\cal T}}'}$ is a greedy drawing of $\widetilde{{\cal T}}'$.

Now we transform the drawing $\Gamma_{\widetilde{{\cal T}}'}$ into a greedy drawing of $T$.
In $\Gamma_{\widetilde{{\cal T}}'}$, the tree $T_c$ is already drawn greedily, and we have to draw $T_{ab} \coloneqq T \setminus T_c$ ($=  v_cw + wv_a + wv_b + T_a + T_b$) appropriately.
We first observe that the tree $T_{ab}$ can be drawn greedily.
Indeed, since $\varphi_a+\varphi_b > 180^\circ$ and $\varphi_a,\varphi_b \leq 120^\circ$, we have $\varphi_a,\varphi_b > 60^\circ$.
By the result in Table~\ref{table:open_angle}, this implies $\varphi_a,\varphi_b > 90^\circ$.
By Lemma~\ref{lem:combination}~(III), the tree $T_{ab}$ (with root $v_c$) has a greedy drawing with an open angle, and the supremum of opening angles is $\varphi_a + \varphi_b - 180^\circ$. 
Then, our task is to place an infinitesimally small drawing of $T_{ab}\setminus v_c$ at an appropriate place in $\mathrm{polygon}(T_c)$ in such a way that (1) $\angle{T_{ab}}$ contains $T_c$, (2) $h_{v_cw}^{v_c}$ contains $T_{c}$, and (3) $h_{v_cw}^w$ contains $T_{ab}$.
Intuitively, these conditions can be satisfied if we draw $T_{ab} \setminus v_c$ inside of  $\mathrm{polygon}(T_c)$ sufficiently far from $T_c$ in such a way that $\angle{T_{ab}}$ is sufficiently large (see Figure~\ref{fig:ps_to_t}).
Here, $\mathrm{polygon}(T_c)$ is bounded, and the interior of $\mathrm{polygon}(T_c)$ is partitioned into some regions by the edges of $T_c$.
We consider a vertex $p$ of $\mathrm{polygon}(T_{c})$ that lies on the boundary of the region containing $\widetilde{{\cal T}}' \setminus T_c$.
We show that if we place the vertex $w$ and an infinitesimally small drawing of $(T_{ab})_{v_cw}^w$ inside of $\mathrm{polygon}(T_c)$ sufficiently close to $p$, the conditions~(1)--(3) are satisfied.

Let $l_1$ and $l_2$ be the supporting lines of the two edges of $\mathrm{polygon}(T_{c})$ incident to $p$. See Figure~\ref{fig:ps_to_t} for an illustration.
Let  $u,u',v,v'$ ($d_{T_c}(v_c,u) < d_{T_c}(v_c,v), d_{T_c}(v_c,u') < d_{T_c}(v_c,v')$) be the vertices of $T_c$ such that $l_1=\mathrm{axis}(uv)$ and $l_2=\mathrm{axis}(u'v')$, where $d_{T_c}$ is the graph distance on $T_c$,
and  let $l_1^+\coloneqq h_{uv}^u$, $l_2^+ \coloneqq h_{u'v'}^{u'}$.
Since the apex of $\angle{T_a}$ lies in $l_1^+ \cap l_2^+$ (recall that $T_a$ is drawn infinitesimally small in $\Gamma_{\widetilde{T}'}$), and $\angle{T_a}$ contains $v \notin l_1^+ $ and $v' \notin l_2^+$, 
one of the rays of $\angle{T_a}$ intersects with $l_1$, and is pointing outward $l_1^+$, and 
the other ray intersects with $l_2$, and is pointing outward $l_2^+$.
Now we remark that $\mathrm{axis}(\widetilde{v}_cv_c)$ separates $v_a,v_b, \widetilde{v}_a, \widetilde{v}_b, \widetilde{v}_c$ from $T_c$ by greediness of $\widetilde{{\cal T}}'$ (recall Proposition~\ref{prop:halfspace}).
Since $\mathrm{axis}(\widetilde{v}_cv_c)$ does not cross any edge of $T_c$ (by Proposition~\ref{prop:halfspace}), and
$v_a,v_b, \widetilde{v}_a, \widetilde{v}_b, \widetilde{v}_c, p$ belong to the same connected component of $\mathrm{polygon}(T_c) \setminus T_c$, the apex $p$ is also separated from  $T_c$, 
and thus from $v$ and $v'$ by $\mathrm{axis}(\widetilde{v}_cv_c)$.
Therefore, both of  the two unbounded edges of $\angle{T_a} \cap \angle{T_b}$ must intersect with $\mathrm{axis}(\widetilde{v}_cv_c)$. 
One of the unbounded edges is pointed outward $l_1^+$, the other is pointed outward $l_2^+$. 
Let $x$ be the intersection of the lines obtained by extending the two unbounded edges of $\angle{T_a} \cap \angle{T_b}$.
Then, $x$ belong to $\mathrm{polygon}(T_c)$.
Let $R$ be the unbounded region bounded by the rays obtained by extending  the two unbounded edges to $x$.
Then, a simple calculation shows that the angle between the two extreme rays of $R$ is less than $|\angle{T_a}|+|\angle{T_b}|-180^\circ$, and thus less than $\varphi_a+\varphi_b-180^\circ$.
Therefore, we can construct a drawing of $T_{ab}$ in which  $\angle{T_{ab}}$ contains $R$ by placing $T_{ab} \setminus v_c$ sufficiently close to $p$ (see Figure~\ref{fig:ps_to_t}).
Since $T_c$ is contained in $R$, the constructed drawing satisfies the condition~(1).

Now we verify the condition~(2). See Figure~\ref{fig:ps_to_t2}.
We first construct a finer description of a region (finer than $R$) in which $T_c$ must lie.
Let $v'_c$ be a vertex of $T_c$ adjacent to $v_c$.
Then, $v'_c$ must lie outside of the circle $S(p)$  with radius $|pv_c|$ centered at $p$.
Otherwise, $h_{v_cv'_c}^{v_c}$ does not contain $p$, which contradicts to the assumption that $p$ is a vertex of  $\mathrm{polygon}(T_c)$.
Next, consider a vertex $v''_c$ ($\neq v_c$) of $T_c$ that is adjacent to $v'_c$.
Then, $v'_c$ must lie outside of the circle with radius $|pv'_c|$ centered at $p$.
Since $|pv_c| < |pv'_c|$, this implies that $v'_c$ must also lie outside of $S(p)$.
Continuing this discussion, we can show that  $T_c$ must lie outside of $S(p)$.
This implies that the region $\widetilde{R} \coloneqq R \setminus S(p)$ contains $T_c$.
We prove that $\widetilde{R}$ is contained in $h_{pv_c}^{v_c}$.
Let $q$ and $r$ be the intersections of $S(p)$ with the two extreme rays of $R$.
Let $M$ be the midpoint of the line segment $pv_c$, and let $M_q$ and $M_r$ be the intersections of  $\mathrm{axis}(pv_c)$ with the line segments $pr$ and $pq$ respectively.
Since the angle between the two extreme rays of $R$ is less than $\varphi_a+\varphi_b-180^\circ$, and thus less than $60^\circ$, we have $\angle{qpr} < 60^\circ$.
This leads to that $|pM_q| < 2|pM| =|pq|$, and $|pM_r| < 2|pM| =|pr|$, which implies that $q,r \in h_{pv_c}^{v_c}$.
Therefore, $\widetilde{R}$ is contained in $h_{pv_c}^{v_c}$.
Since $w$ is placed sufficiently close to $p$, it implies $T_c$ is contained in $h_{wv_c}^{v_c}$.
Since the condition~(3) is clearly satisfied, the constructed drawing is a greedy drawing of $T$.
Therefore, the if part is proved for all cases.

\begin{figure}[h]
\centering
\includegraphics[scale=0.5, bb =  0 0 816 390, clip]{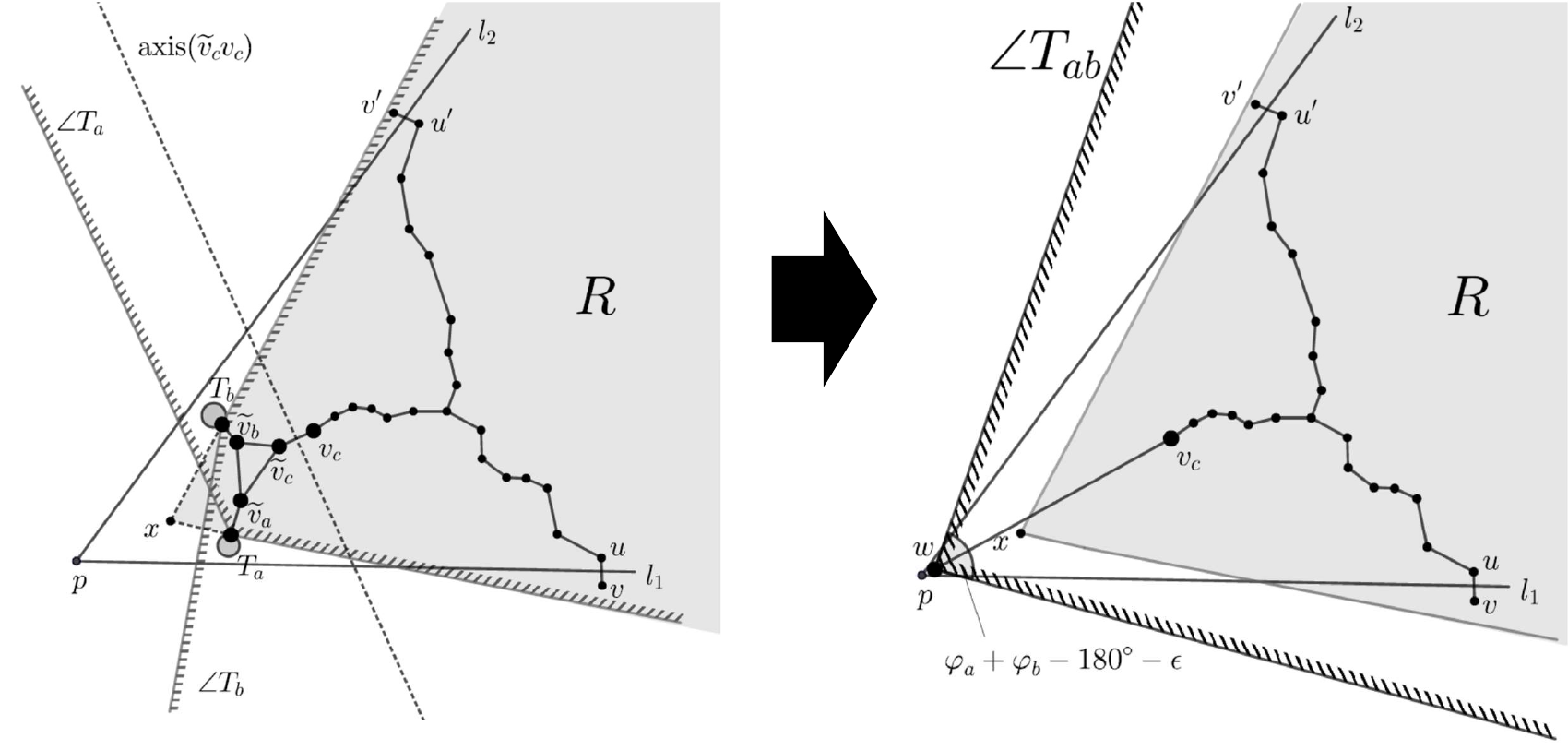} 
    \caption{Transforming a greedy drawing of $\widetilde{{\cal T}}'$ into a greedy drawing of $T$} 
\label{fig:ps_to_t}
\end{figure}
\begin{figure}[h]
\centering
\includegraphics[scale=0.25, bb =   0 0 750 949, clip]{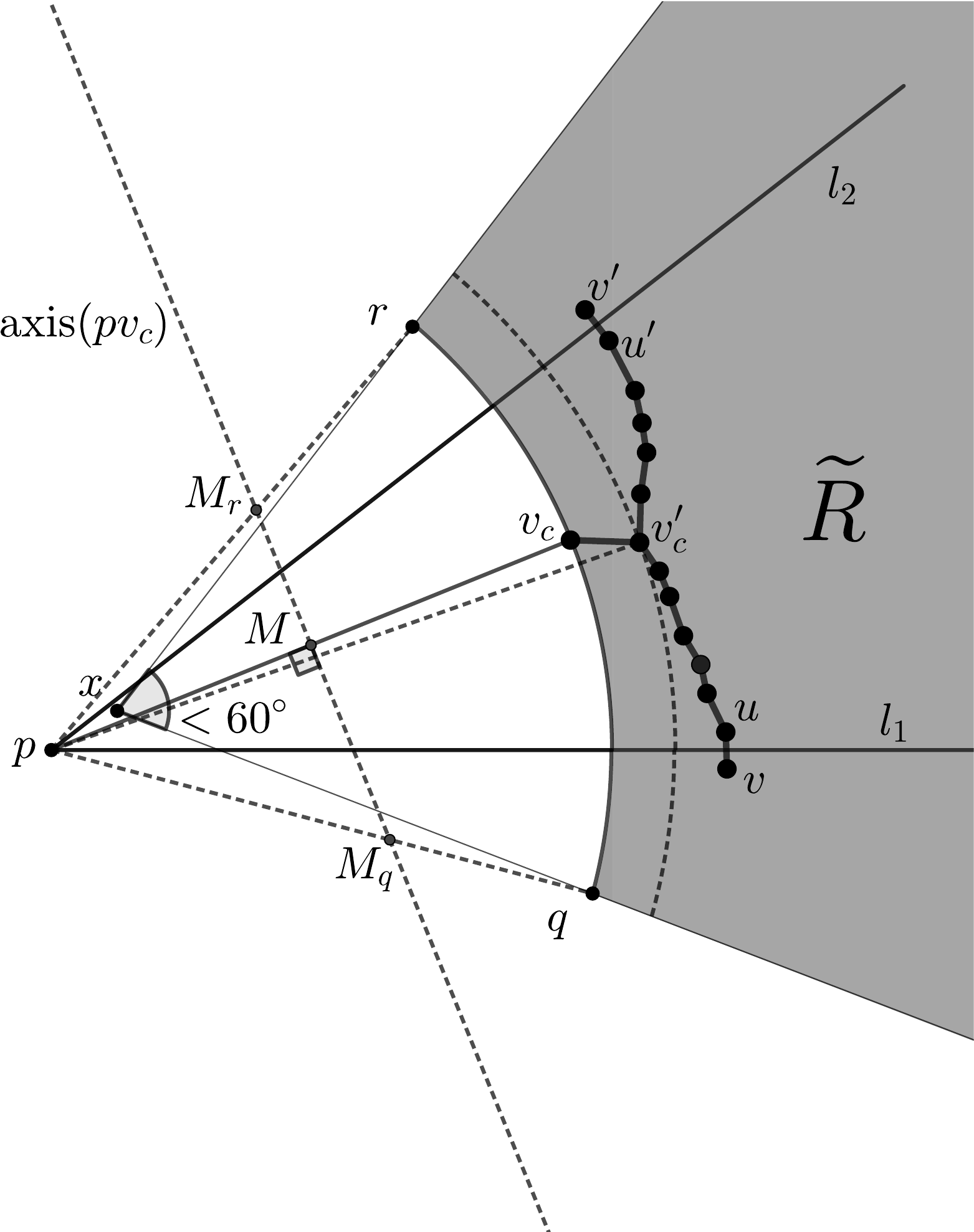} 
    \caption{Proving that $T_{uv}^{u}$ is contained in $h_{v_cp}^{v_c}$} 
\label{fig:ps_to_t2}
\end{figure}

Now we prove the if-part.
Assume that ${\cal T}$ satisfies the condition~(a).
Then, at most five angles $\varphi_i$ can satisfy $\varphi_i \neq 180^\circ$, i.e., $\varphi_i \leq 120^\circ$.
Otherwise, $\sum_{i=0}^{m-1}{\varphi_i} \leq 180^\circ (m-6) + 120^\circ \times 6 = 180^\circ (m-2)$, which contradicts to the assumption.
Let $\psi_0 \coloneqq \varphi_{i_0},\dots, \psi_{k-1} \coloneqq \varphi_{i_{k-1}}$ ($i_0 < \cdots < i_{k-1}$) be the angles with $\varphi_{i_j} \neq 180^\circ$, i.e., $\varphi_{i_j} \leq 120^\circ$.
Let us denote by ${\cal V}_j$, for $j=0,\dots,k-1$, the set of inner vertices of the $v_{i_j}v_{i_{j+1}}$-path in ${\cal C}$ that contains the edge $v_{i_j}v_{i_j+1}$, where the indices are interpreted modulo $m+1$. 
Based on the above observation, we consider the following three cases.

\begin{description}[itemsep=0cm,leftmargin=0cm,topsep=1ex]
\item[(Case 1)] ${\cal T}$ satisfies the condition~(a), and there are exactly five angles $\varphi_i$ satisfying $\varphi_i \leq 120^\circ$.

In this case, no angle  $\varphi_i$ satisfies $\varphi_i \leq 90^\circ$.
Suppose otherwise. Then, some angle $\varphi_j$ satisfies $\varphi_j \leq 90^\circ$, which implies $\varphi_j \leq 60^\circ$ by the result in Table~\ref{table:open_angle}.
This leads to that $\sum_{i=0}^{m-1}{\varphi_i} \leq 180^\circ (m-5) + 120^\circ \times 4 + 60^\circ = 180^\circ (m-2)$, which is a contradiction.
Let us consider a pentagon $\Gamma_1$ with vertices $W_0,\dots,W_4$, labeled counterclockwise,  satisfying
\begin{itemize}
\item $90^\circ <  \angle{W_j} < \psi_{j}$ for $j=0,\dots,4$, .
\item The bisector $\mathrm{axis}(W_jW_{j+1})$ does not pass through $W_{j+3}$, for $j=0,\dots,4$.
\end{itemize}
Here, the indices of $W_j$ are interpreted modulo $5$.
Since $\sum_{j=0}^{4}{\psi_j} > 540^\circ$, it is easy to construct a pentagon satisfying the first condition.
Slightly perturbating this pentagon, we can construct a pentagon $\Gamma_1$ satisfying both of the two conditions.
Then, we place each vertex $v_{i_j}$ at $W_j$, and
the vertices in ${\cal V}_j$ on the line segment $W_jW_{j+1}$ in the following way.
We first note that $\angle{W_j} > 90^\circ$ and $\angle{W_{j+1}} > 90^\circ$, and thus the bisector $\mathrm{axis}(W_jW_{j+1})$ passes through either the interior of the line segment $W_{j-1}W_{j-2}$ or that of $W_{j+2}W_{j+3}$.
In the former case, we place the vertices in ${\cal V}_j$ on $W_jW_{j+1}$ sufficiently close to $W_j$.
In the latter case, we place those vertices on $W_jW_{j+1}$ sufficiently close to $W_{j+1}$. See Figure~\ref{fig:pentagon1}.
For $k=0,\dots,4$, we call the set of vertices placed sufficiently close to (or placed at) $W_k$ the \emph{$\epsilon$-neighborhood} of $W_{k}$.

Since a greedy path clearly exists between every pair of vertices in ${\cal V}_j$,
we prove that there is a greedy $wv$-path for every vertex $w \in {\cal V}_j$ and every vertex $v \notin {\cal V}_j$.  
Let $l_j \coloneqq \mathrm{axis}(W_jW_{j+1})$, and $l_j^k$ the perpendicular line of $W_jW_{j+1}$ at $W_k$ for $k = j,j+1$. 
Suppose that  $w$ is in the $\epsilon$-neighborhood of $W_j$.
Then, the counterclockwise path from $w$  to a vertex in the $\epsilon$-neighborhood of $W_{j+1},W_{j+2},W_{j+3}$ is greedy.
Indeed, the perpendicular bisector of each edge on the path is sufficiently close to one of $l_j, l_{j+1},l_{j+2}, l_j^j,l_{j+1}^{j+1},l_{j+1}^{j+2},l_{j+2}^{j+2},l_{j+2}^{j+3},l_{j+3}^{j+3}$, and one can easily check that the half-plane defined for each edge on the path
contains $v$ (recall Proposition~\ref{prop:halfspace}).
On the other hand, the clockwise path from $w$ to a vertex in the $\epsilon$-neighborhood of $W_{j-1}$ is greedy. 
Next, suppose that  $w$ is in the $\epsilon$-neighborhood of $W_{j+1}$.
Then, the counterclockwise path from $w$  to a vertex in the $\epsilon$-neighborhood of $W_{j+2}$ is greedy, and the clockwise path from $w$ to a vertex in the $\epsilon$-neighborhood of $W_{j},W_{j-1},W_{j-2}$ is greedy.
Therefore, the constructed drawing of ${\cal C}$, denoted hereinafter by $\Gamma_2$, is greedy. 

Now we replace each line segment $W_jW_{j+1}$ by a sufficiently flat convex polygonal chain (see Figure~\ref{fig:polygonal_chain}).  
Then, the perpendicular bisector of each edge on the polygonal chain is sufficiently close to one of $l_j, l_j^j, l_j^{j+1}$. 
If we draw each $T_i$ infinitesimally small in such a way that each $\angle{T_i}$ contains $\Gamma_2$, then the constructed drawing is a greedy drawing of ${\cal T}$.

\begin{figure}[h]
\begin{minipage}[t]{0.5\linewidth}
\centering
\includegraphics[scale=0.2, bb = 0 0 846 949, clip]{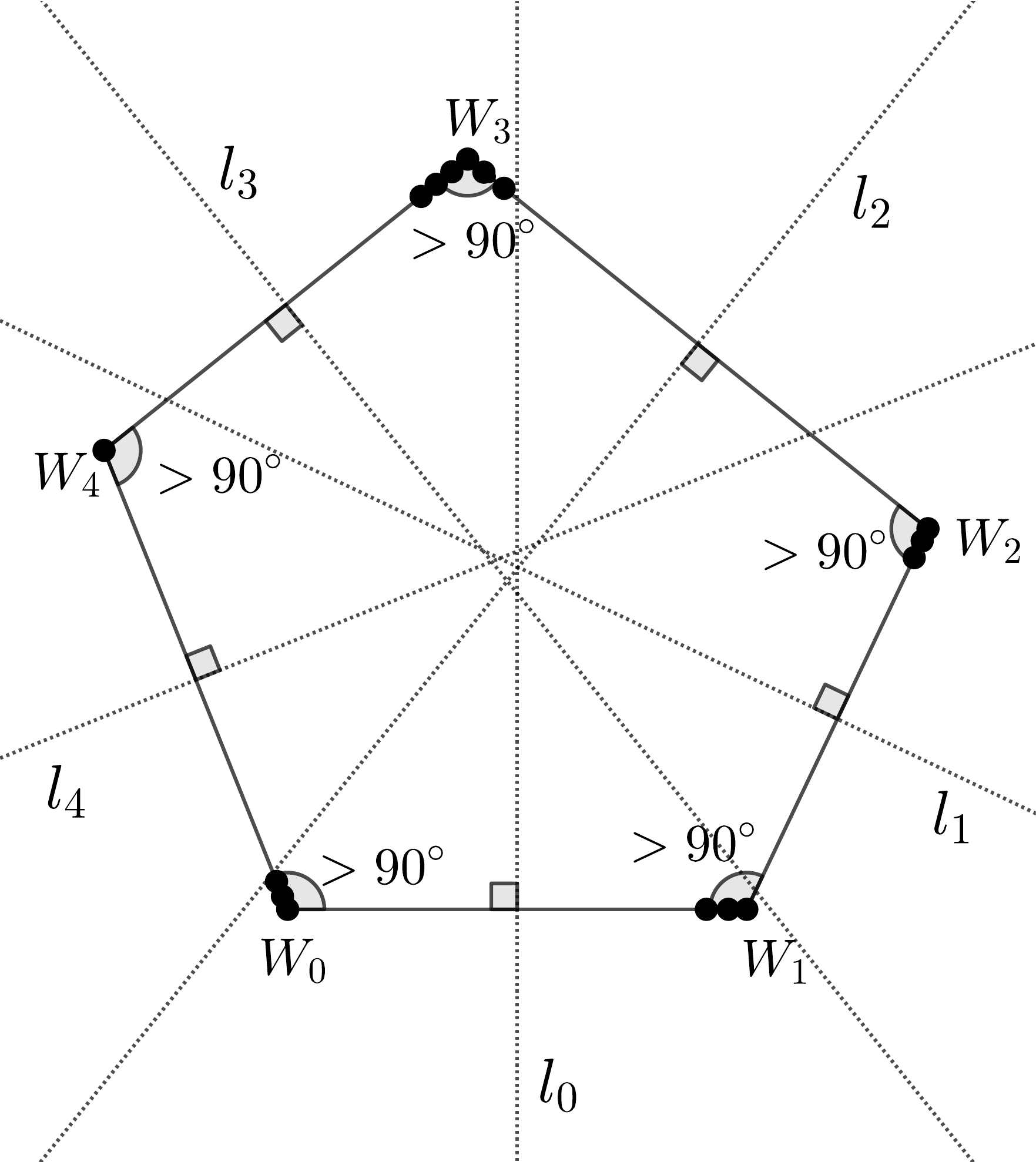} 
    \caption{Pentagon $\Gamma_1$ and drawing $\Gamma_2$} 
\label{fig:pentagon1}
\end{minipage}
\begin{minipage}[t]{0.5\linewidth}
\centering
\includegraphics[scale=0.25, bb =  0 0 775 279, clip]{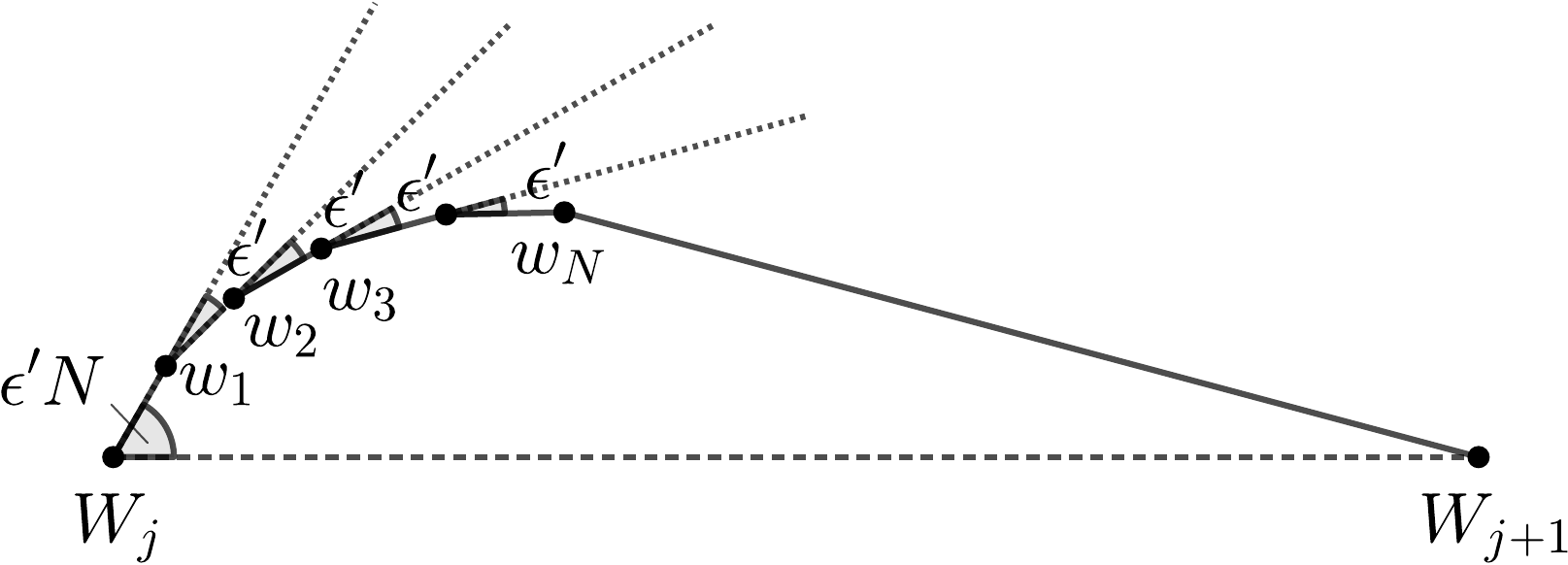} 
    \caption{Replacing the line segment $W_jW_{j+1}$ with a convex polygonal chain ($w_1,\dots,w_N$ are the vertices in ${\cal V}_j$)} 
\label{fig:polygonal_chain}
\end{minipage}
\end{figure}

\item[(Case 2)]  ${\cal T}$ satisfies the condition~(a), and there are exactly four angles $\varphi_i$ satisfying $\varphi_i \leq 120^\circ$.

In this case, at most one angle $\varphi_i$ satisfies $\varphi_i \leq 90^\circ$.
Suppose otherwise. Then, at least two angles $\varphi_j$ satisfy $\varphi_j \leq 90^\circ$, and these two angles satisfy $\varphi_j \leq 60^\circ$ by the result in Table~\ref{table:open_angle}.
Therefore, we have $\sum_{i=0}^{m-1}{\varphi_i} \leq  180^\circ (m-4) + 120^\circ \times 2 + 60^\circ \times 2 =  180^\circ (m-2)$, which is a contradiction.
We also remark that no angle $\varphi_i$ satisfies $\varphi_i =0$ because otherwise we have $\sum_{i=0}^{m-1}{\varphi_i} \leq  180^\circ (m-4) + 120^\circ \times 3 + 0 =  180^\circ (m-2)$, a contradiction. 
Without loss of generality, let $\psi_0$ be the smallest angle among the four angles $\psi_0,\dots,\psi_3$.
Then, we have $90^\circ < \psi_1, \psi_2, \psi_3 \leq 120^\circ$.
We consider the following three cases separately.

\item[(Case 2a)] $0^\circ < \psi_0 \leq 45^\circ$.

We consider a quadrilateral $\Gamma'_1$ with vertices $W_0,\dots,W_3$, labeled counterclockwise, satisfying
\begin{itemize}
\item  $\angle{W_0} = \psi_0 - \epsilon$, $\angle{W_1} = \psi_1 - \epsilon$, $\angle{W_2} = 360^\circ - \psi_0 -\psi_1 - \psi_3 + 3\epsilon$, $\angle{W_3} = \psi_3 - \epsilon$,
\item $\angle{W_0W_1W_3} = 90^\circ + \epsilon$, $\angle{W_0W_3W_1} = 90^\circ - \psi_0$, $\angle{W_2W_1W_3} = \psi_1- 90^\circ - 2\epsilon$, \\ $\angle{W_2W_3W_1} = \psi_0 + \psi_3 - 90^\circ - \epsilon$,
\end{itemize}
where $\epsilon > 0$ is a sufficiently small number. See Figure~\ref{fig:quad_type1}.
Note that all the angles appearing above are positive.
We first show that there is a greedy path between every pair of vertices of the quadrilateral $\Gamma'_1$.
First, observe that $\angle{W_2W_3W_1} < 45^\circ+120^\circ-90^\circ < 75^\circ$ and  $\angle{W_2} > 360^\circ-45^\circ-120^\circ-120^\circ = 75^\circ$,
and thus $\angle{W_2W_3W_1} < \angle{W_2}$. Similarly, we have $\angle{W_2W_1W_3} < 30^\circ < \angle{W_2}$.
It follows that $|W_1W_3| > |W_1W_2|$ and $|W_1W_3| > |W_2W_3|$.
Moreover, since $\angle{W_0W_1W_3} > 90^\circ$, we have $|W_0W_3| > |W_1W_3|$ and $|W_0W_3| > |W_0W_1|$.
We also observe that $\angle{W_0W_3W_1} > 45^\circ > \angle{W_0}$, which implies $|W_0W_1| > |W_1W_3|$.
Therefore, there is a greedy path between every pair of vertices of $\Gamma'_1$.
The greedy routes are summarized as follows:
\begin{itemize}
\item $W_0 \rightarrow W_3  \rightarrow W_2 \rightarrow W_1$,
\item $W_1 \rightarrow W_0$, $W_1  \rightarrow W_2 \rightarrow W_3$,
\item $W_2 \rightarrow W_1 \rightarrow W_0$, $W_2  \rightarrow W_3$,
\item $W_3 \rightarrow W_2 \rightarrow W_1$, $W_3  \rightarrow W_0$.
\end{itemize}
Then, we place the vertices in ${\cal V}_0, {\cal V}_1, {\cal V}_2, {\cal V}_3$ on the edges of $\Gamma'_1$ sufficiently close to $W_1, W_1, W_3, W_0$ respectively.
Since $\angle{W_3} > 90^\circ$ and $\angle{W_2W_3W_1} < 90^\circ$, the perpendicular bisector of  each edge drawn small on $W_2W_3$ passes through the interior of $W_0W_1$. 
This implies that move along each small edge on $W_2W_3$ toward $W_2$ decreases the distance to $W_1$ and $W_2$.
Continuing this discussion, one can prove that the constructed drawing of ${\cal C}$ is also greedy.
Next, we construct drawings of $T_{i_0},\dots,T_{i_3}$.
We first remark that the inequality $\sum_{j=0}^3{\psi_j} > 360^\circ$ implies $\psi_2 > 360^\circ -\psi_0 -\psi_1 - \psi_3$.
This leads to $\angle{W_2} < \psi_2$ because $\epsilon$ is sufficiently small.
Therefore, we have $\angle{W_j} < \psi_j$ for all $j=0,\dots,3$.
Thus, if we draw each of $T_{i_j}$ sufficiently small in such a way that $\angle{T_{i_j}}$ contains $\Gamma'_1$, the constructed drawing is greedy.
Finally, we replace the edges of $\Gamma'_1$ by convex polygonal chains, and draw subtrees $T_j$ with $|\angle{T_j}|_*=180^\circ$ as in Case~1.
Then, the constructed drawing is a greedy drawing of ${\cal T}$.

\item[(Case 2b)] $45^\circ< \psi_0  \leq 90^\circ$. 

We first remark that $\psi_0 \leq 60^\circ$ by the result in Table~\ref{table:open_angle}.
We consider a quadrilateral $\Gamma''_1$ with vertices $W_0,\dots,W_3$, labeled counterclockwise,  satisfying
\begin{itemize}
\item $\angle{W_0} = \psi_0 -\epsilon$,  $\angle{W_1} = \psi_1 -\epsilon$, $\angle{W_2} = 360^\circ - \psi_0 - \psi_1 - \psi_3 + 4\epsilon$, $\angle{W_3} = \psi_3 - 2\epsilon$, 
\item $\angle{W_0W_3W_1} = \psi_0 - 2\epsilon$, $\angle{W_0W_1W_3} = 180^\circ - 2\psi_0 + 3\epsilon$, $\angle{W_2W_3W_1} = \psi_3 - \psi_0$, \\ $\angle{W_2W_1W_3} = 2\psi_0 + \psi_1 - 180^\circ - 4\epsilon$,
\end{itemize}
where $\epsilon > 0$ is a sufficiently small number. See Figure~\ref{fig:quad_type2}.
Note that all the angles appearing above are positive.
We first show that there is a greedy path between every pair of vertices of the quadrilateral $\Gamma''_1$.
We first observe that $\angle{W_0} < 60^\circ, \angle{W_0W_3W_1} < 60^\circ, \angle{W_0W_1W_3} > 60^\circ$, which implies $|W_0W_1| < |W_0W_3|$ and $|W_1W_3| < |W_0W_3|$.
Since $\angle{W_2W_1W_3} < 2\times 60^\circ + 120^\circ -180^\circ = 60^\circ$ and $\angle{W_2} > 360^\circ - 120^\circ -120^\circ -60^\circ = 60^\circ$, we have  $\angle{W_2W_1W_3} > \angle{W_2}$.
This implies $|W_2W_3| < |W_1W_3|$.
Moreover, we have $\angle{W_1W_3W_0} < \angle{W_0}$, which leads to $|W_1W_3| > |W_1W_0|$.
Therefore, there is a greedy path between every pair of vertices of $\Gamma''_1$.
The greedy routes are summarized as follows:
\begin{itemize}
\item $W_0 \rightarrow W_1  \rightarrow W_2 \rightarrow W_3$,
\item $W_1 \rightarrow W_0$, $W_1  \rightarrow W_2 \rightarrow W_3$,
\item $W_2 \rightarrow W_1 \rightarrow W_0$, $W_2  \rightarrow W_3$,
\item $W_3 \rightarrow W_2$, $W_3  \rightarrow W_0 \rightarrow W_1$.
\end{itemize}
Then, we place the vertices in ${\cal V}_0, {\cal V}_1, {\cal V}_2, {\cal V}_3$ on the edges of $\Gamma''_1$ sufficiently close to $W_0, W_1, W_3, W_3$ respectively.
By a similar discussion to Case~2a, one can prove that the constructed drawing of ${\cal C}$ is also greedy.
Then, replacing each line segment by a convex polygonal chain, and drawing the remaining subtrees as in Case~2a, we obtain a greedy drawing of ${\cal T}$.
\item[(Case 2c)] $90^\circ < \psi_0 \leq 120^\circ$.

We consider a square $\Gamma''_1$ with vertices $W_0,\dots,W_3$, labeled counterclockwise.
Then, we place the vertices in ${\cal V}_0, {\cal V}_1, {\cal V}_2, {\cal V}_3$ on the edges of $\Gamma''_1$ sufficiently close to $W_0, W_1, W_2, W_3$ respectively.
See Figure~\ref{fig:quad_type3}.
Then, replacing each line segment by a convex polygonal chain, and drawing the remaining subtrees as in Case~2a, we obtain a greedy drawing of ${\cal T}$.

Therefore, we can always construct a greedy drawing of ${\cal T}$ in Case~2.

\begin{figure}[h]
\begin{minipage}[t]{0.33\linewidth}
\centering
\includegraphics[scale=0.2, bb =0 0 759 823, clip]{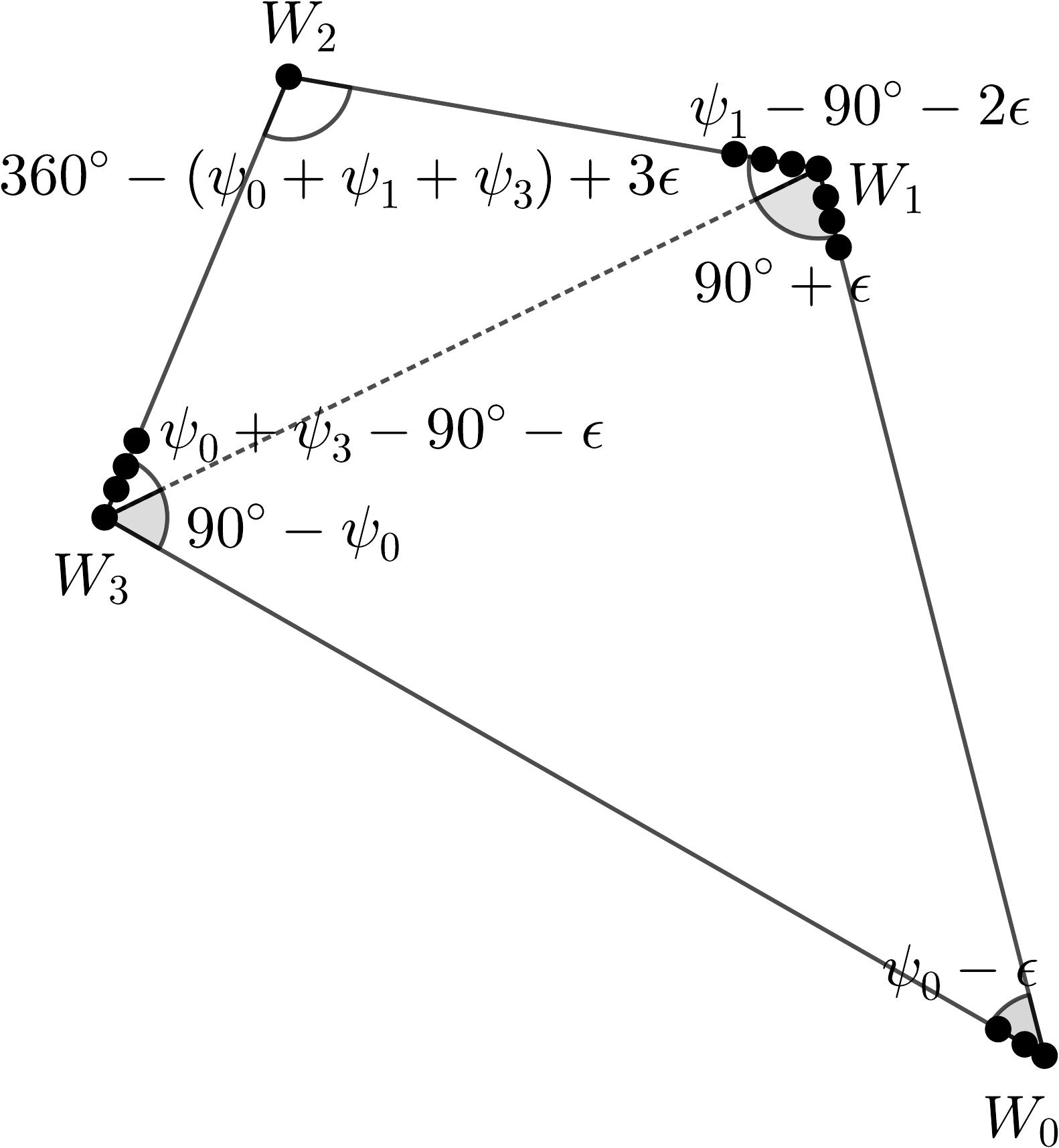} 
\subcaption{$0^\circ < \psi_0 < 45^\circ$} 
\label{fig:quad_type1}
\end{minipage}
\begin{minipage}[t]{0.33\linewidth}
\centering
\includegraphics[scale=0.2, bb = 0 0 785 788, clip]{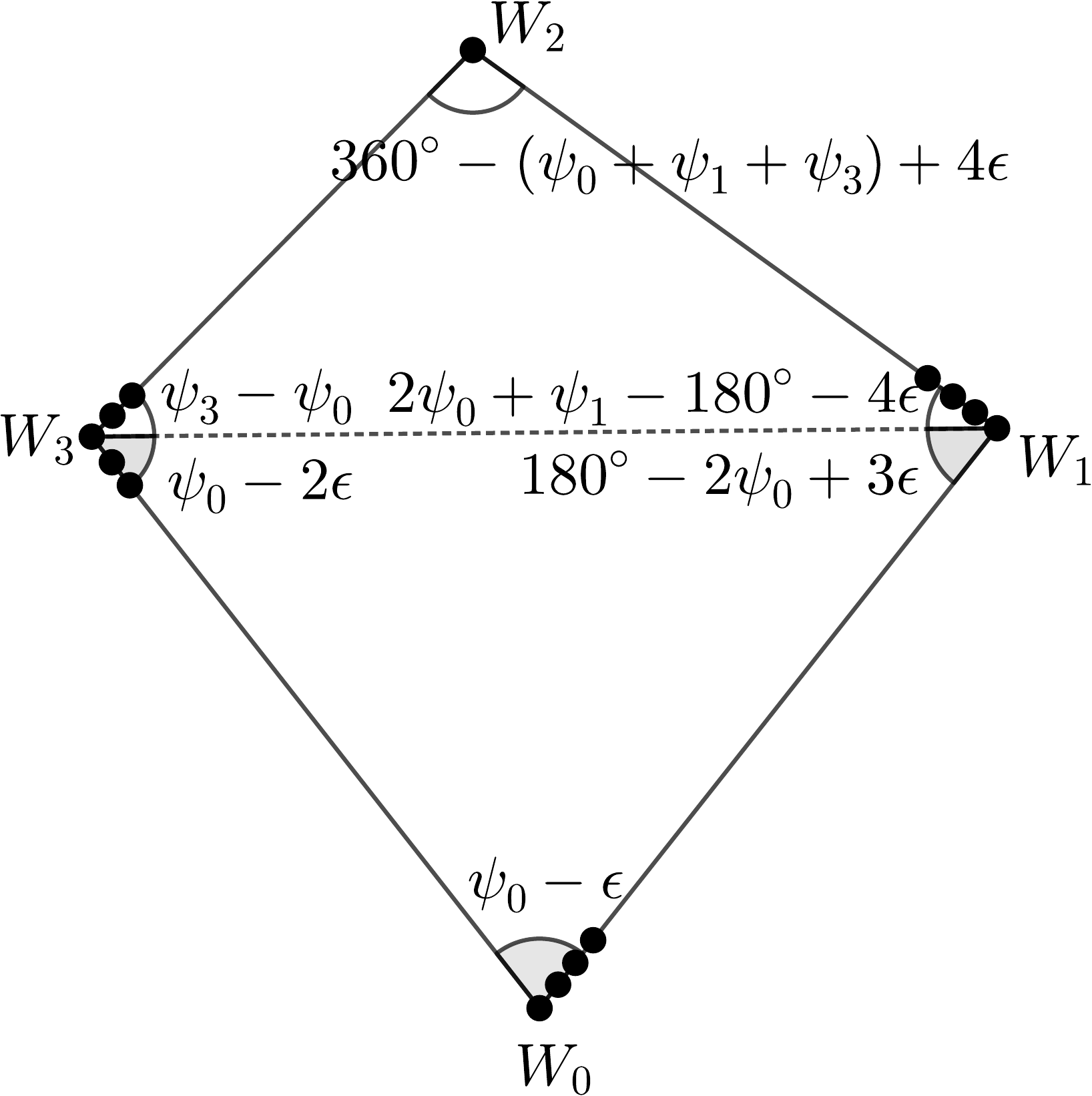} 
\subcaption{$45^\circ \leq \psi_0 \leq 60^\circ$} 
\label{fig:quad_type2}
\end{minipage}
\begin{minipage}[t]{0.33\linewidth}
\centering
\includegraphics[scale=0.2, bb =  0 0 441 483, clip]{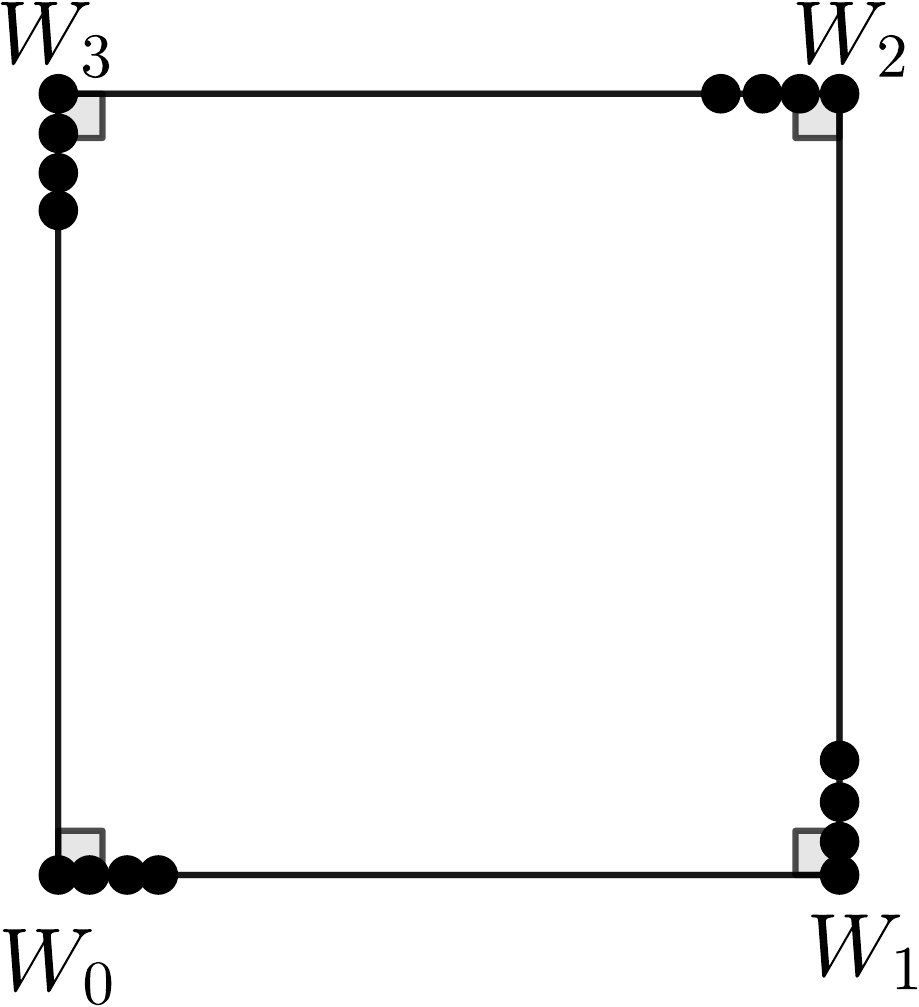} 
\subcaption{$90^\circ < \psi_0 \leq 120^\circ$}
\label{fig:quad_type3} 
\end{minipage}
    \caption{Greedy drawings of  ${\cal C}$ in Case~2} 
\label{fig:quadtilateral}
\end{figure}

\item[(Case 3)] There are at most three angles $\varphi_i$ satisfying $\varphi_i \leq 120^\circ$, and ${\cal T}$ satisfies the condition~(b).

Let $T$ be a $Y$-transformed tree of ${\cal T}$.
Let $v_a,v_b,v_c$ be the connection vertices  and  $w$ the center vertex.
Let $\Gamma_T$ be a greedy drawing of $T$.
By Lemma~\ref{lem:open_closed}, at most one of $T_i$, for $i \in \{ a,b,c \}$, satisfies $|\angle{T_i}| \leq 0$ in $\Gamma_T$.
Let us first assume that there is a subtree $T_i$ with $|\angle{T_i}| \leq 0$, and
suppose without loss of generality that $T_c$ is such a subtree.
Slightly perturbating if necessary, we assume $|\angle{T_c}| < 0$.
Applying the shrinking lemma (Lemma~\ref{lem:shrinking}), we transform the drawing into a drawing in which $T_a$ and $T_b$ are infinitesimally small.
Let $p$ be a vertex of $\mathrm{polygon}(T_{c})$ that belongs to the same connected component of $\mathrm{polygon}(T_{c}) \setminus T_c$ as $T \setminus T_c$.
Let $l_1$ and $l_2$ be the supporting lines of the two edges of $\mathrm{polygon}(T_{c})$ incident to $p$.
We move $w,v_a,v_b$ into places in $\mathrm{polygon}(T_c)$ that are sufficiently close to $p$, and translate $T_a$ and $T_b$ accordingly.
See Figure~\ref{fig:redrawing}.
Then, each of $\angle{T_a}$ and $\angle{T_b}$ contains the original angle, and thus contains $T_{c}$.
We also note that the transformation keeps planarity.
Let $A,B,C$ be the geometric points corresponding to $v_a,v_b,v_c$ respectively.
Then, the triangle $ABC$ formed by $A,B,C$ has a property that $|AB|$ is sufficiently small. 
We construct a greedy drawing of ${\cal T}$ by placing the remaining vertices of ${\cal C}$ on the edges of the triangle $ABC$, and placing the remaining subtrees.
Without loss of generality, we suppose that $\angle{B} > \angle{A}$.
Let us consider the following two cases separately.

\item[(Case 3a)]  $\angle{B} < 90^\circ$.

In this case, $\angle{B}$ is sufficiently close to $90^\circ$.
Let $Q$ be the point on the line segment $AB$ with $|CQ| = |CB|$, and $R$ the point on the line segment $BC$ with $|AR| = |AB|$.
Note that $|BR|$ is sufficiently small, compared to $|AB|$.
We place each vertex $v$ in ${\cal V}_0$ on the line segment $AQ$ sufficiently close to $A$,
 each vertex $v$ in  ${\cal V}_1$ on the line segment $CR$ sufficiently close to $R$,
and each vertex $v$ in ${\cal V}_2$ on the line segment $CA$ sufficiently close to $C$. See Figure~\ref{fig:triangle1}.
Then, it is easy to check that the constructed drawing of ${\cal C}$ is greedy.
Replacing the line segments by convex polygonal chains and placing the remaining subtrees as in Cases~1 and 2, we obtain a greedy drawing of ${\cal T}$.

\item[(Case 3b)]  $\angle{B} \geq 90^\circ$.

We place each vertex in ${\cal V}_0$ on the line segment $AB$ sufficiently close to $A$, 
each vertex in ${\cal V}_1$ on the line segment $BC$ sufficiently close to $B$, and
each vertex in ${\cal V}_2$ on the line segment $CA$ sufficiently close to $C$.
Then, the constructed drawing of ${\cal C}$ is clearly greedy.
Replacing the line segments by convex polygonal chains and placing the remaining subtrees as in Cases~1 and 2, we obtain a greedy drawing of ${\cal T}$.
\end{description}
Therefore, in either case, ${\cal T}$ has a greedy drawing.
Finally, we assume that $|\angle{T_i}| > 0$ for all $i = a,b,c$.
This implies that $\psi_a, \psi_b, \psi_c > 0$.
Without loss of generality, we assume that $\psi_a$ is the largest angle among these three angles.
Then, we have $\psi_a > 90^\circ$ by the result  in Table~\ref{table:open_angle}.
We consider a triangle with vertices $Z_a,Z_b,Z_c$ satisfying the following two conditions:
\begin{itemize}
\item $\angle{Z_i} < \psi_i$ for $i=a,b,c$,
\item $\angle{Z_a} = \psi_a -\epsilon$,
\end{itemize}
for sufficiently small $\epsilon > 0$.
Since $\psi_a + \psi_b + \psi_c > 180^\circ$, such a triangle clearly exists.
Then, one can construct a greedy drawing of ${\cal T}$ similarly to Case~3b.
Therefore, we can always construct a greedy drawing of ${\cal T}$ in Case~3.
\end{proof}
\begin{figure}[h]
\centering
\includegraphics[scale=0.5, bb =0 0 752 278, clip]{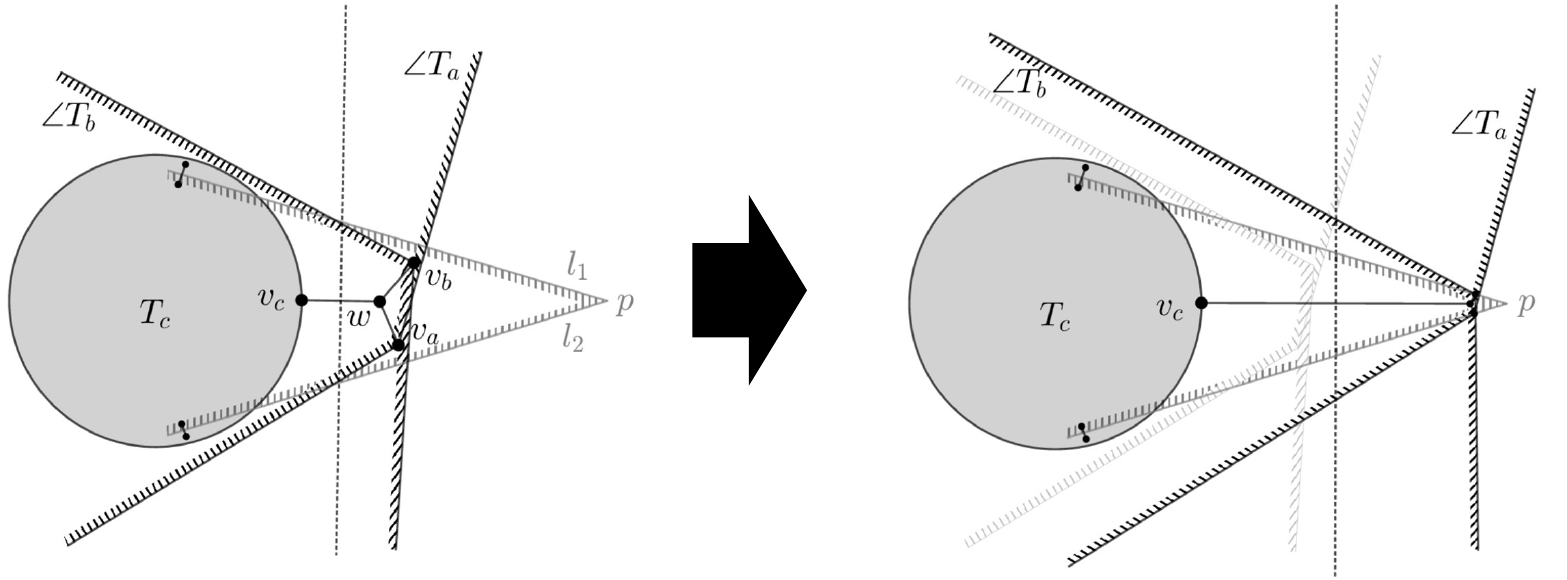} 
    \caption{Transforming a greedy drawing of $T$} 
\label{fig:redrawing}
\end{figure}

\begin{figure}[h]
\begin{minipage}[t]{0.5\linewidth}
\centering
\includegraphics[scale=0.18, bb= 0 0 1022 425, clip]{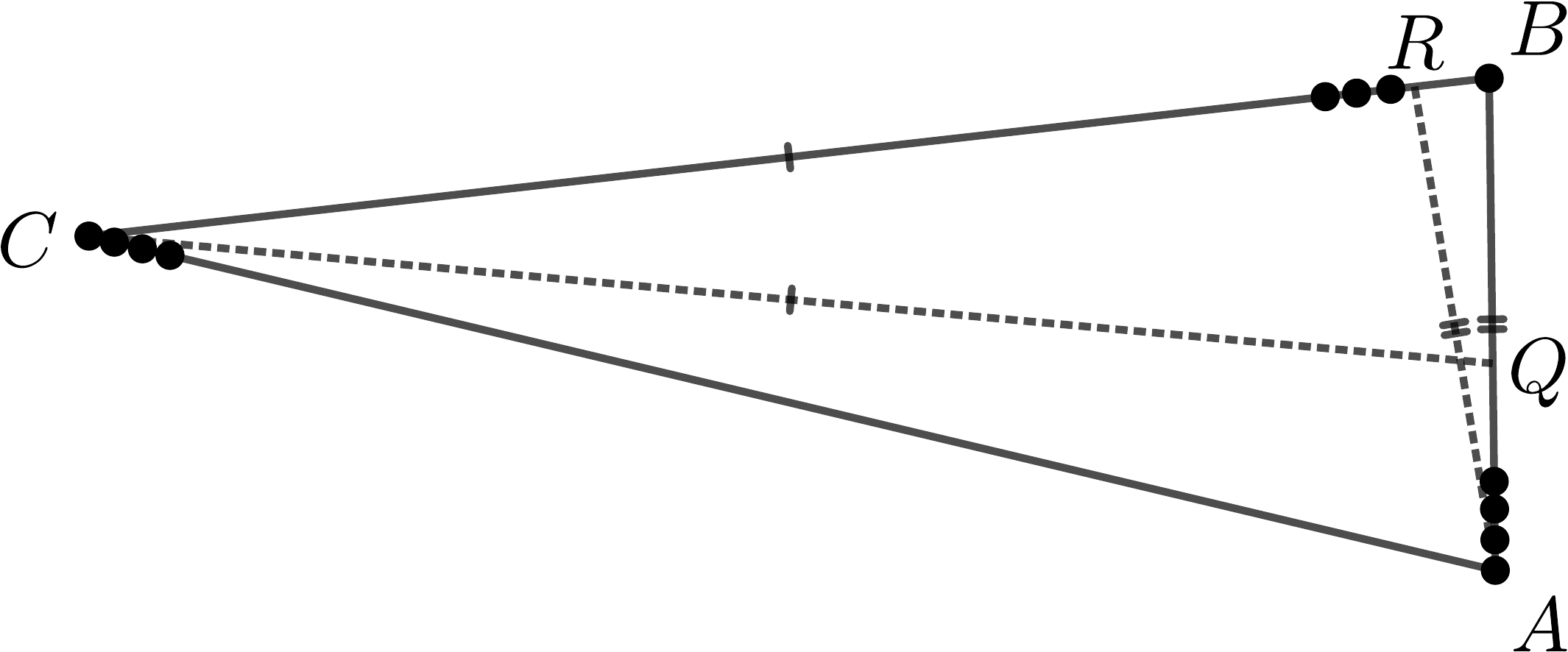} 
\subcaption{$\angle{B} < 90^\circ$} 
\label{fig:triangle1}
\end{minipage}
\begin{minipage}[t]{0.5\linewidth}
\centering
\includegraphics[scale=0.18, bb =  0 0 1011 325, clip]{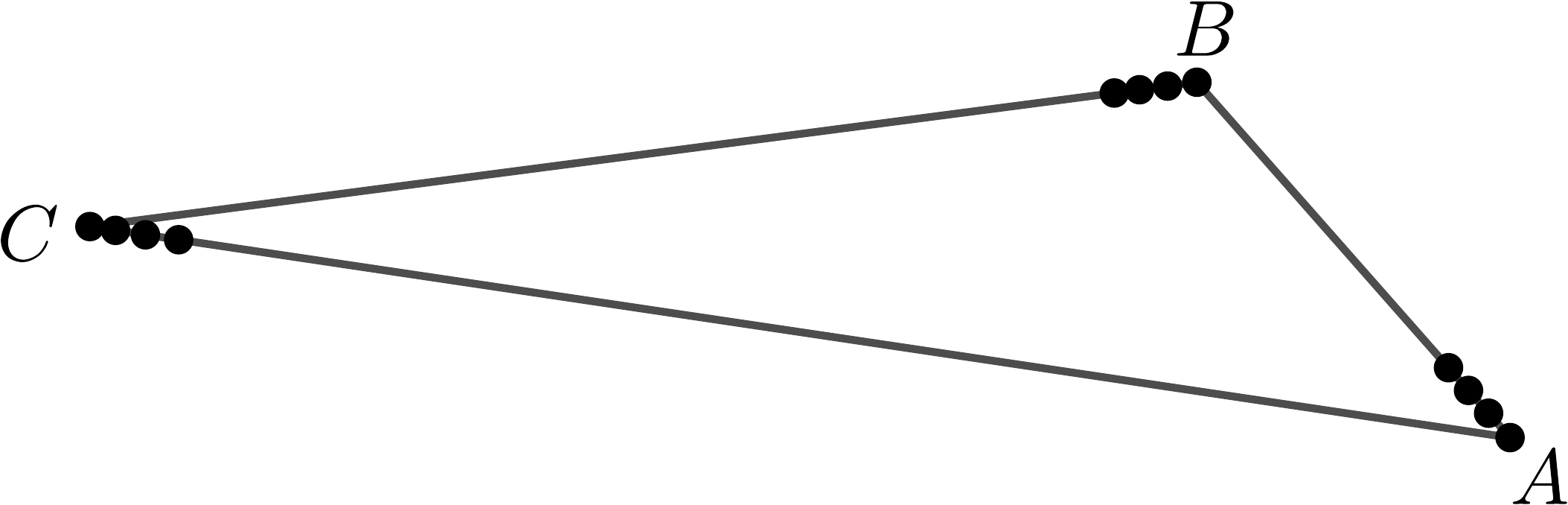} 
\subcaption{$\angle{B} \geq 90^\circ$} 
\label{fig:triangle2}
\end{minipage}
    \caption{Greedy drawings of ${\cal C}$ (Case 3)} 
\end{figure}

\newpage

\section{Concluding remarks}
In this paper, we have presented a complete combinatorial characterization of greedy-drawable trees (Proposition~\ref{prop:deg4} and Theorem~\ref{thm:main}).
Our characterization immediately leads to a linear-time recognition algorithm for greedy-drawable trees.
That is, one can determine the greedy-drawability of a tree with maximum degree $5$ by checking whether the suprema of opening angles of the subtrees around a degree-$5$ vertex  
are in the ranges of angles listed in Tables~\ref{table:degree5_case1} and \ref{table:degree5_case3}.
Since the supremum of opening angles of a tree can be computed in linear time by the algorithm \emph{getOpenAngle} in~\cite{NP17}, this condition can likewise be verified in linear time.
Combining this  algorithm with the linear-time recognition algorithm for greedy-drawable trees with maximum degree $\leq 4$ by N\"{o}llenburg and Prutkin~\cite{NP17}, we obtain
a linear-time recognition algorithm for greedy-drawable trees in the general case.
\begin{cor}
\label{cor:tree}
Greedy drawability of a tree $T=(V,E)$ can be checked in $O(|V|)$ time.
\end{cor}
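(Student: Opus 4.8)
The plan is to read off a recognition algorithm from the combinatorial characterization and bound its running time. First I would compute the maximum degree $\Delta$ of $T$, which takes $O(|V|)$ time. If $\Delta \geq 6$, report that $T$ is not greedy-drawable; this is correct by the degree bound of~\cite{PR05}. If $\Delta \leq 4$, invoke the linear-time recognition algorithm of N\"ollenburg and Prutkin~\cite{NP17}. (Equivalently, one can re-derive it from our results: pick a root $r$ via Lemma~\ref{lem:all_open_angles}, compute the suprema of opening angles of the at most four subtrees around $r$ with the algorithm \emph{getOpenAngle} of~\cite{NP17}, and check the inequality~(\ref{angle_ineq}), i.e., test membership in Table~\ref{table:degree4_main} using Proposition~\ref{prop:deg4}.)

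The main case is $\Delta = 5$. Here I would pick an arbitrary degree-$5$ vertex $r$ with neighbors $v_0,\dots,v_4$, delete $r$ to obtain five subtrees, and form the rooted trees $T_i := T^{v_i}_{rv_i}+rv_i$ with root $r$. Applying \emph{getOpenAngle} to each $T_i$ computes $\varphi_i := |\angle{T_i}|_*$ precisely enough to determine its angle type; since $\sum_{i=0}^{4}|V(T_i)| = O(|V|)$ and \emph{getOpenAngle} runs in time linear in the size of its input, these five calls together cost $O(|V|)$. It then suffices to test, in $O(1)$ time, whether the tuple of angle types of $(T_0,\dots,T_4)$ matches, up to a permutation of its entries, one of the finitely many rows of Tables~\ref{table:degree5_case1} and~\ref{table:degree5_case3}; by Theorem~\ref{thm:main} (which packages Case~1 and Proposition~\ref{degree5_main_prop} for Case~2) this test decides greedy-drawability of $T$. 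Finally, trees with $\Delta \le 2$ are paths, which are trivially greedy-drawable and already covered by the $\Delta \le 4$ routine.

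Two correctness points deserve attention, though neither is a genuine obstacle. The first is that it suffices to test a single degree-$5$ vertex $r$ rather than all of them: if $T$ is greedy-drawable, then for every degree-$5$ vertex $r$ each associated subtree satisfies $\varphi_i > 0$, since otherwise the digraph $D(T)$ from the proof of Lemma~\ref{lem:all_open_angles} would contain a directed $2$-cycle through $r$ --- note that $\widetilde{T}^r_{rv_i}$ contains the degree-$5$ vertex $r$ and hence, by Proposition~\ref{prop:open_angle}, cannot be drawn with an open angle, so $|\angle{\widetilde{T}^r_{rv_i}}|_* \leq 0$ --- contradicting Lemma~\ref{lem:open_closed}; thus Theorem~\ref{thm:main} applies for any choice of $r$. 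The second is that the tables are phrased via ``subgraphs of subdivisions'', which corresponds exactly to $\varphi_i$ lying in the stated interval: subdivision leaves $|\angle{\cdot}|_*$ unchanged, and passing to a subtree cannot decrease $|\angle{\cdot}|_*$ because it only relaxes the constraints defining $\mathrm{polytope}(T_i)$, so interval membership is precisely the angle-type test that \emph{getOpenAngle} supports. With these observations, the whole procedure amounts to one linear-time call to \emph{getOpenAngle} per subtree plus a constant-size table lookup, which yields the claimed $O(|V|)$ bound.
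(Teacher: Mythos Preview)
Your proposal is correct and follows essentially the same approach as the paper: the paper's justification (given in the paragraph of the concluding remarks immediately preceding the corollary) is to compute the suprema of opening angles of the subtrees around a degree-$5$ vertex via \emph{getOpenAngle} and compare against Tables~\ref{table:degree5_case1} and~\ref{table:degree5_case3}, falling back on the \cite{NP17} algorithm for $\Delta\le 4$. Your additional remarks---that any degree-$5$ vertex suffices (which the paper also notes at the start of Section~\ref{sec:deg5} using the $D(T)$ argument) and that ``subgraph of a subdivision'' amounts to angle-type interval membership---are correct elaborations of points the paper leaves implicit.
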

As a subsequent step, we have investigated a characterization of greedy-drawable pseudo-trees.
Although we did not present an explicit description of greedy-drawable pseudo-trees, such a description can easily be obtained by determining the possible angle types of subtrees using Table~\ref{table:open_angle}.
Similarly to the case of trees, we obtain the following corollary:
\begin{cor}
\label{cor:pseudo_tree}
Greedy drawability of a pseudo-tree ${\cal T}=(V,E)$ can be checked  in $O(|V|)$ time.
\end{cor}

\section*{Acknowledgement}
This work was supported by JSPS KAKENHI Grant Number JP19K20210.

\end{document}